\Crefname{conjecture}{Conjecture}{Conjectures}
\newtheorem{definition}{Definition}
\newtheorem{thm}{Theorem}[section]
\newtheorem{prop}[thm]{Proposition}
\newtheorem{lemma}[thm]{Lemma}
\newtheorem{Corollary}[thm]{Corollary}
\newtheorem{example}[thm]{Example}
\newtheorem{remark}[thm]{Remark}
\newcommand{\C}{{\mathbb C}}
\DeclareMathOperator{\K}{K}
\DeclareMathOperator{\ch}{ch}
\begin{document}
\title[I-functions on cotangent bundles]{Lifting $I$-functions from the Grassmannians to their cotangent bundles}

\author{Kamyar Amini}
\address{
Department of Mathematics, 
225 Stanger Street, McBryde Hall,
Virginia Tech University, 
Blacksburg, VA 24061
USA
}
\email{kamini@vt.edu}
\subjclass[2020]{Primary 14M15, 14N35, 81T60; Secondary 05E05}
\keywords{Quot Scheme, Quantum K theory, I-function, J-function.}

\begin{abstract} 
We relate two fundamental enumerative functions, namely the $I$-functions in the quantum $K$-ring of $G(r,n)$ and of its cotangent bundle, by defining a $K$-theoretic operator on classes, called balancing. This operator lifts the $I$-function of $G(r,n)$ to that of $T^*G(r,n)$, providing an explicit geometric interpretation. We also define an operator acting on difference operators and show that, for certain $K$-theoretic classes and the corresponding difference operators that annihilate them—including the $I$-functions of projective spaces $\mathbb{P}^n$—the balancing operation on difference operators and on classes is compatible. Moreover, for general $G(r,n)$, we recover the Bethe-Ansatz equations for $T^*G(r,n)$ via a procedure inspired by both balancing and the abelian/non-abelian correspondence.
\end{abstract}

\maketitle 

\setcounter{tocdepth}{1}
\tableofcontents


\section{Introduction}

Let $X$ be a nonsingular projective variety equipped with an action of the torus $T = (\mathbb{C}^*)^n$. In Gromov-Witten theory, the $K$-theoretic $J$-function of $X$ is defined as:
\[
J(q, Q) = 1+\frac{1}{1-q}\sum_{\beta \in \mathrm{Eff}(X)}Q^{\beta}ev_*\Big(\frac{1}{1-q\mathcal{L}}\Big),
\]
where $ev_*:K_T(\overline{M}_{0,1}(X,\beta)) \to K_T(X)$ is the $K$-theoretic push forward induced by the evaluation map $ev:\overline{M}_{0,1}(X,\beta) \to X$, and $\mathcal{L}$ denotes the universal cotangent line bundle at the marked point. This function serves as a generating function for 1-point Gromov-Witten invariants.

When $X$ is noncompact, one requires a replacement framework in order to define invariants and quantum $K$-ring. Nakajima quiver varieties, including cotangent bundles of Grassmannians, provide important examples of such noncompact spaces. The new framework is the moduli space of quasimaps, introduced by Ciocan-Fontanine, Kim, and Maulik \cite{CKM}.

 Let $X = W//G$ be a GIT quotient. The moduli space of quasimaps from $\mathbb{P}^1$ to $X$, denoted by $QM(X)$, carries evaluation maps $ev_p: QM(X) \to [W/G]$ for each $p \in \mathbb{P}^1$, where $[W/G]$ is the quotient stack. A quasimap $f$ is said to be stable at a point $p \in \mathbb{P}^1$ if $ev_p(f) = f(p)\in X \subset [W/G]$; it is called stable if this property holds for all but finitely many points $p \in \mathbb{P}^1$. 

Each quasimap has a well-defined degree, and the moduli space decomposes accordingly as a countable union:
\[
QM(X)= \bigsqcup_d QM^d(X),
\]
 where $QM^d(X)$ denotes the stack of stable quasimaps of degree $d$. For spaces like $T^*G(r,n)$, the degree lies in $\mathbb{Z}$. Each $QM^d(X)$ carries a perfect obstruction theory, which allows one to define the virtual structure sheaf $\mathcal{O}_{d}^\mathrm{vir}$ and the virtual normal bundle $N_{d}^{\mathrm{vir}}$; more details for $T^*G(r,n)$ appear in \Cref{section 7}.

Using this setup, \cite{CKM} defined the cohomological $I$-function, an analogue of the $J$-function in the quasimap setting. The $K$-theoretic $I$-function is defined in \cite{wen2019ktheoreticifunctionvthetamathbfg} as follows. Let $F_0 \subset QM^d(X)$ denote the locus of quasimaps that are constant on $\mathbb{P}^1 \setminus \{0\}$; these are automatically stable at $\infty \in \mathbb{P}^1$. The $I$-function is defined by:
\begin{equation} \label{def_I}
I: = \sum_{d \geq 0} Q^{d} ev_{{\infty},*}\Big(\frac{\mathcal{O}_{d,F_0}^{\mathrm{vir}}}{\lambda_{-1}(N_{\mathrm{vir},d,F_0}^{*})}\Big),
\end{equation}
where $\mathcal{O}_{d,F_0}^{\mathrm{vir}}$ and $N_{\mathrm{vir},d,F_0}^{*}$ denote the induced virtual structure sheaf and virtual conormal bundle on $F_0$, and the (Hirzebruch) $\lambda_{y}$ class is defined in $K$-theory by: 
\[ \lambda_{y}(E) := \sum_{i} y^i[\wedge^i E]. \]
The Grassmannian $G(r,n)$ can be realized as the GIT quotient $M_{r,n}//_{\det} GL_r$, the space of $ r \times n$ matrices divided by $GL_r$, with the stability condition given by the determinant.
The moduli space of degree $d$ quasimaps to $G(r,n)$ identifies with the Quot scheme $Quot_{\mathbb{P}^1,d}(\mathbb{C}^n,r)$, a smooth projective variety parametrizing rank $n-r$ coherent quotient sheaves of degree $d$ on $\mathbb{P}^1$:
\[
\mathbb{C}^n \otimes \mathcal{O}_{\mathbb{P}^1} \to F \to 0.
\]
Under this identification, Taipale \cite{Taipale} showed that the quasimap $I$-function coincides with the $J$-function of $G(r,n)$. When we discuss $G(r,n)$ throughout the paper, we may use the $J$-function and the $I$-function interchangeably.

Another key object for Nakajima quiver varieties, introduced by Okounkov \cite{MR3752463}, is the quasimap vertex function, a generating function for equivariant counts of stable quasimaps from $\mathbb{P}^1$ to Nakajima quiver varieties. For $T^*G(r,n)$, it is defined by: 
 \[
 \sum_{d \geq 0}ev_{{\infty},*}(QM^d_{\infty}, \hat{\mathcal{O}}^{{\mathrm{vir}}}_d) Q^d \in K_{T \times \mathbb{C}^*_q \times \mathbb{C}^*_{\hbar}}(T^*G(r,n))_{\mathrm{loc}}[[Q]],
\]
where $QM^d_{\infty}(X)$ parametrizes degree $d$ quasimaps stable at $\infty$, and $\hat{\mathcal{O}}_d^{{\mathrm{vir}}}$ is the twisted virtual structure sheaf defined by:
\[
\hat{\mathcal{O}}_d^{{\mathrm{vir}}} = {\mathcal{O}^{{\mathrm{vir}}}_d} \otimes \mathcal{K}_{\mathrm{vir}}^{1/2},
\]
with $\mathcal{K}_{\mathrm{vir}}^{1/2}$ a square root of the 
virtual canonical bundle $\mathcal{K}_{\mathrm{vir}}$.
By the localization theorem, the quasimap vertex function can 
also be expressed as:
\[
\sum_{d} \chi(\hat{\mathcal{O}}^{{\mathrm{vir}}}_d)Q^d,
\]
which justifies interpreting it as a generating series of equivariant counts.

Pushkar, Smirnov, and Zeitlin \cite{PSZ} computed an explicit formula for the quasimap vertex function $V_x(t;q,Q,\hbar)$ localized at each fixed point $x \in T^*G(r,n)$. Writing:
\[
V_x(t;q,Q,\hbar) = \sum_{\substack{d_1+\dots+d_r=d \\ d_i \ge 0}}V_x^d(t;q,\hbar) Q^d , 
\]
the coefficients are:
\[
V_x^d(t;q,\hbar) = q^{nd/2}\prod_{i,j=1}^r \{\frac{t_j}{t_i}\}_{d_i-d_j}^{-1} \cdot \prod_{i=1}^{r}\prod_{j=1}^{n}\{\frac{t_j}{t_i}\}_{d_i} ,
\]
where:
\[
\{x\}_d = \frac{(\frac{\hbar}{x})_d}{(\frac{q}{x})_d}(-q^{\frac{1}{2}}\hbar^{\frac{-1}{2}})^d, \quad \quad \phi(x) = \prod_{i=0}^{\infty}(1-xq^i), \quad   \quad (x,q)_d = \frac{\phi(x)}{\phi(q^dx)}\text{.}
\]
Here, $\hbar$ is the weight of the $\mathbb{C}^*$-action scaling the cotangent direction of $T^*G(r,n)$, $t=(t_1,\dots,t_n)$ denotes the vector of weights of the torus $T$, and $q$ is the weight of the $\mathbb{C}^*_q$-action on the domain $\mathbb{P}^1$ of quasimaps.
\begin{definition}
    The twisted $I$-function of $T^* G(r,n)$ is obtained from the previously defined $I$-function \eqref{def_I} by replacing $\mathcal{O}_{d,F_0}^{\mathrm{vir}}$ with its twisted version:
\begin{equation}
I^{tw} := \sum_{d \geq 0} Q^d ev_{{\infty},*}\Big(\frac{\hat{\mathcal{O}}_{d,F_0}^{\mathrm{vir}}}{\lambda_{-1}(N_{\mathrm{vir},d,F_0}^{*})}\Big).
\end{equation}
\end{definition}

We show that, for $T^*G(r,n)$, the twisted $I$-function and the quasimap vertex function coincide. While this result is implicit in the literature, to our knowledge, it has not been explicitly stated or proved. It is the starting point of our contribution.
\begin{thm}[See \Cref{prop:V=I} below.]
    For $T^*G(r,n)$, the twisted $I$-function of $T^* G(r,n)$ is equal to the quasimap vertex function.
\end{thm}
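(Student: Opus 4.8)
The plan is to show that the twisted $I$-function and the quasimap vertex function are literally the same object, once one identifies the defining data on both sides. Both are generating series over quasimap degrees $d$ with push-forwards under $ev_\infty$, so the content of the claim is really a comparison of what is being pushed forward: on the vertex-function side one integrates the twisted virtual structure sheaf $\hat{\mathcal O}^{\mathrm{vir}}_d$ over the full moduli space $QM^d_\infty$ of quasimaps stable at $\infty$, whereas on the $I$-function side one integrates $\hat{\mathcal O}^{\mathrm{vir}}_{d,F_0}/\lambda_{-1}(N^*_{\mathrm{vir},d,F_0})$ over the fixed locus $F_0$ of quasimaps constant away from $0$. My first step would therefore be to recall the $\mathbb C^*_q$-action on $\mathbb P^1$ fixing $0$ and $\infty$, note that it induces an action on each $QM^d_\infty$, and identify $F_0$ as (a union of components of) the $\mathbb C^*_q$-fixed locus: a quasimap stable at $\infty$ and $\mathbb C^*_q$-fixed must concentrate all its degree at $0$, which is exactly the condition defining $F_0$.

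With that identification in place, the second step is to apply the $K$-theoretic virtual localization theorem of the $\mathbb C^*_q$-action to the defining integral of the vertex function. Localization rewrites $\sum_d Q^d\, ev_{\infty,*}(\hat{\mathcal O}^{\mathrm{vir}}_d)$ as a sum over fixed components of $ev_{\infty,*}$ of the restricted virtual structure sheaf divided by the $\lambda_{-1}$ of the dual virtual normal bundle. I would check that the contribution of the component $F_0$ is precisely $\hat{\mathcal O}^{\mathrm{vir}}_{d,F_0}/\lambda_{-1}(N^*_{\mathrm{vir},d,F_0})$, matching the summand in the definition of $I^{tw}$. The key compatibility to verify is that the square-root twist $\mathcal K_{\mathrm{vir}}^{1/2}$ restricts correctly, so that the twisted structure sheaf on the fixed locus is the one appearing in $I^{tw}$; because the twist is defined uniformly via the virtual canonical bundle, its restriction to $F_0$ commutes with the localization splitting of $\mathcal K_{\mathrm{vir}}$ into fixed and moving parts.

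The remaining point is to explain why only the component $F_0$ survives, i.e.\ why the definition of $I^{tw}$ picks out a single fixed component rather than the full localization sum. This is a standard feature of the quasimap $I$-function construction of \cite{wen2019ktheoreticifunctionvthetamathbfg}: the $I$-function is defined as the generating series of contributions from quasimaps that degenerate only over $0$, which is exactly the distinguished $\mathbb C^*_q$-fixed component $F_0$, and the stability at $\infty$ ensures this locus is proper so that $ev_{\infty,*}$ is well defined. Thus the definition of $I^{tw}$ already \emph{is} the $F_0$-contribution to the localized vertex function; conversely, because the vertex function's defining push-forward is a $\mathbb C^*_q$-equivariant class and $F_0$ carries the full degree-$d$ information relevant to the one-point invariants, the two series agree term by term in $Q$.

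I expect the main obstacle to be bookkeeping around the square-root virtual canonical bundle and the polarization: one must ensure that the choice of $\mathcal K_{\mathrm{vir}}^{1/2}$ used in the vertex function is compatible with its restriction to $F_0$ and with the $\lambda_{-1}(N^*_{\mathrm{vir}})$ denominator produced by localization, so that no spurious sign or half-weight factor appears. Once this compatibility is pinned down—most cleanly by tracking the fixed and moving decomposition of the virtual tangent complex along $F_0$—the equality of the two generating functions follows directly from virtual localization, and no genuinely new computation beyond the formula of \cite{PSZ} is required.
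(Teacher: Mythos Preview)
Your proposal is correct and follows essentially the same route as the paper: both arguments reduce the equality to virtual localization on the quasimap space. The only difference is in the choice of localizing torus. You localize the vertex-function integral with respect to $\mathbb{C}^*_q$ alone, identifying the fixed locus of $QM^d_\infty$ with $F_0$ and reading off the $I^{tw}$-integrand directly. The paper instead localizes both sides with respect to the full torus $A = T \times \mathbb{C}^*_q \times \mathbb{C}^*_\hbar$, reducing each push-forward to a sum over the same finite set of isolated fixed points (since $F_0^A = (QM_\infty^d)^A$), and then uses the Whitney relation $\lambda_{-1}(N^*_{\mathrm{vir},F_0,x})\cdot\lambda_{-1}(T^*_{\mathrm{vir},F_0,x}) = \lambda_{-1}(T^*_{\mathrm{vir},x}QMap)$ at each point. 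Your version is the cleaner conceptual statement; the paper's is the same idea carried one step further down to points.

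One small remark: your third paragraph worries about ``why only the component $F_0$ survives,'' but this is a non-issue that you already resolved in your first paragraph. A $\mathbb{C}^*_q$-fixed quasimap in $QM^d_\infty$ is nonsingular at $\infty$, hence constant on $\mathbb{P}^1\setminus\{0\}$ with value $f(\infty)\in X$, so $F_0$ is the \emph{entire} $\mathbb{C}^*_q$-fixed locus, not merely a distinguished component. There are no other contributions to discard.
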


Next, we relate the $K$-theoretic $I$-function of $G(r,n)$ to the quasimap vertex function of $T^*G(r,n)$. Taking the limit $\hbar \to 0$ of the localized quasimap vertex function at each fixed point of $T^*G(r,n)$ recovers the localization of the $I$-function of $G(r,n)$ at the same fixed point. Our main contribution is to provide a reverse procedure: starting from the $I$-function of $G(r,n)$, we recover the quasimap vertex function on $T^*G(r,n)$ via a certain $K$-theoretic operator called {\bf balancing}. In other words, we provide an algebraic procedure that lifts the $I$-function to the quasimap vertex function, which we explain below.

Fix a collection of  $T \times \mathbb{C}^*_q$-modules $\{V_i : \lambda_{-1}V_i \neq 0 \}_{i \in \Lambda}$, and let $A \subset K_{T \times \mathbb{C}^*_q}(\mathrm{pt})$ denote the multiplicative set generated by $\lambda_{-1}V_i$, $i \in \Lambda$. Denote the localization of $ K_{T \times \mathbb{C}^*_q}(\mathrm{pt})$ with respect to $A$ by $K_{T \times \mathbb{C}^*_q}(\mathrm{pt})_{\mathrm{loc}}$. Define the subset $U \subset K_{T \times \mathbb{C}^*_q}(\mathrm{pt})_{\mathrm{loc}}$ by:
\[
 U=\{\prod_{i=1}^m\frac{1}{u_i} : u_i =\lambda_{-1}V_i \quad \text{for distinct $i \in \Lambda$, $m < \infty$}\}.
 \]
 We define the balancing operator:
 \[
 \mathcal{B}_y : \mathbb{N}\langle U \rangle \to K_{T \times \mathbb{C}^*}(\mathrm{pt})[y],
 \]
  where on $U$ is defined by:
 \[
 \frac{1}{\prod_{i=1}^m\lambda_{-1}V_i} \mapsto  \frac{\prod_{i=1}^m\lambda_{y}V_i}{\prod_{i=1}^m\lambda_{-1}V_i},
 \]
and expand additively on $\mathbb{N}\langle U \rangle$.

 We extend $\mathcal{B}_y$ to power series in $U[[Q]]$, where $Q$ is a formal variable:
\[
\mathcal{B}_y(\sum_{d \geq 0}A_d Q^d) = \sum_{d\geq 0} \mathcal{B}_y(A_d) Q^d, \quad \text{ $A_d \in U$}.
\]
  
 Let $X$ be a $T$-variety such that $\big| X^T\big| < \infty$, equipped with an additional trivial $\mathbb{C}^*$-action. This means that:
 \[
 K_{T \times \mathbb{C}^*}(X) = K_T(X)[q,q^{-1}],
 \]
  where $q$ generates $K_{\mathbb{C}^*}(\mathrm{pt})=\mathbb{C}[q,q^{-1}]$.

\begin{definition}
Let $\gamma$  be a class in  $K_{T}(X)[[q]]$ such that for each fixed point $x \in X^T$, $\gamma_{\arrowvert_x}$ is a rational function in $\mathbb{N}\langle U\rangle$. We call such a class a \textbf{U-good} class. We define:
\begin{itemize}
    \item {\bf $\lambda_y$-balanced $K$-theoretic class} of $\gamma$ at $x$ to be $\mathcal{B}_y(\gamma_{{\arrowvert}x})$.
    \item {\bf $\lambda_y$-balanced $K$-theoretic class} of $\gamma$, denoted by $\mathcal{B}_y(\gamma)$ whenever it exists, to be the class satisfying $\mathcal{B}_y(\gamma)_{{\arrowvert}x} = \mathcal{B}_y(\gamma_{{\arrowvert}x})$.
\end{itemize}
\end{definition}

 We now define a balancing operator on a certain class of $q$-difference operators. Let
 \[
 S:=R[q^{\pm Q \partial_{Q}}],
 \]
 be the algebra of $q$-difference operators generated over the ring:
 \[
 R :=\mathbb{C}[P, t_1^{-1},\dots,t_n^{-1},q,Q].
 \]
 As discussed in \Cref{operator for operators}, any element of $S$ has a canonical expansion:
 \[
       \sum_{b \in \mathbb{Z}}\sum_{a \in \mathbb{N}} c_a(t^{-1}, P,q)(q^{Q \partial_{Q}})^{b}Q^a.
\]

 Define a subset $\mathcal{C} \subset S$
        by:
        \[
        \mathcal{C}:=\Big\{\prod_{\text{finite}}(1-M) \Big| \text{$M$ is a monomial of positive degree in $q^{\pm Q \partial_{Q}}$}   \Big\}.
        \]
        Define the balancing operator $\mathcal{H}_{\hbar}$ on $\mathcal{C} \oplus R$ as follows:
        \begin{itemize}
            \item On $\mathcal{C}$:
         \begin{equation*}
          \mathcal{H}_{\hbar}\left(\prod_{\text{finite}}(1-M_i)\right) = \frac{\prod_{\text{finite}}(1-M_i)}{\prod_{\text{finite}}(1-\hbar M_i)} := \prod_{finite}(1-\hbar M_i)^{-1}\cdot(1-M_i),
       \end{equation*}
       \item On $R$: $\mathcal{H}_{\hbar}$  is identity.
 \end{itemize}

  We apply these constructions to $X = G(r,n)$. Let the $I$-function of $G(r,n)$ be:
\begin{equation} \label{intro}
I(t;q,Q) = \sum_{d \geq 0} I_d(t;q) Q^d.
\end{equation}

We will show that each coefficient $I_d(t;q)$ is a U-good class for a natural choice of $T \times \mathbb{C}^*_q$-modules. Our main theorem establishes that the quasimap vertex function of $T^*G(r,n)$, after absorbing certain factor into the quantum parameter $Q$, can be obtained purely algebraically by applying the balancing operator to the $I$-function of $G(r,n)$. Moreover, the balancing of classes and $Q$-difference operators is compatible in certain cases:
\begin{thm} 
\begin{enumerate}
    \item 
 Let  $x=\langle e_{i_1}, \dots, e_{i_r} \rangle$ be a fixed point of $T^*G(r,n)$. Then, upon substituting $y=-q^{-1}\hbar$, we obtain: 
    \[
    V_x(t;q,Q,\hbar) = \mathcal{B}_{-q^{-1}\hbar}(I(t;q,Q)_{\arrowvert_{x}}) ,
    \]
    where the coefficients of $Q^d$ in both functions are equal for every $d \geq 0$:
    \[
    V_x^d(t;q,\hbar) = \mathcal{B}_{-q^{-1}\hbar}(I_d(t;q)_{\arrowvert_{x}}).
    \]
    \item Let $\mathcal{I} = \sum_{d \geq 0}\frac{1}{\prod_{i=1}^n(qa_i)_d}Q^d$, where $a_i \in K_T(X)$ satisfy $\prod_{i=1}^n(1 - a_i) = 0$, and the restriction of each $a_i$ to any $x \in X^T$ is a monomial. Define the operator $\mathcal{D} =\prod_{i=1}^n (1 - a_iq^{Q\partial _Q}) - Q$. Then, for an appropriate collection of $T \times \mathbb{C}^*_q$-modules, we have:
    \begin{itemize}
        \item $\mathcal{D}$ annihilates $\mathcal{I}$, that is, $\mathcal{D} \mathcal{I} = 0$.
        \item $\mathcal{H}_y(\mathcal{D})$ annihilates $\mathcal{B}_y(\mathcal{I})$, that is; $\mathcal{H}_y(\mathcal{D}) \mathcal{B}_y(\mathcal{I})=0$.
    \end{itemize}

    \end{enumerate}
 \end{thm}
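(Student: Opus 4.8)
The plan is to prove both parts by reducing everything to explicit manipulations of $q$-Pochhammer symbols at the torus-fixed points and then invoking the localization description of the relevant classes. For part (1), I would first write the localization $I_d|_x$ of the degree-$d$ coefficient of the $G(r,n)$ $I$-function at the fixed point $x=\langle e_{i_1},\dots,e_{i_r}\rangle$ in fully factored form. Using Taipale's identification \cite{Taipale} of the quasimap $I$-function with the $J$-function together with the standard Quot-scheme localization, each $I_d|_x$ is a finite product of elementary factors of the shape $1/(qw)_m=1/\prod_{k=1}^m(1-wq^k)$, where $w$ ranges over the relevant $T\times\mathbb{C}^*_q$-weights $t_a/t_b$ and $m$ over the partial degrees $d_a$ and the differences $d_a-d_b$. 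This simultaneously exhibits each $I_d|_x$ as a $U$-good class, since every such factor is $1/\lambda_{-1}(V)$ for the module $V=\bigoplus_{k=1}^m(wq^k)$.

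Next I would apply $\mathcal{B}_{-q^{-1}\hbar}$ factorwise. Since $\lambda_y(V)=\prod_{k=1}^m(1+ywq^k)$, the substitution $y=-q^{-1}\hbar$ turns the inserted numerator into $\prod_{k=1}^m(1-\hbar wq^{k-1})=(\hbar w)_m$, so that $\mathcal{B}_{-q^{-1}\hbar}\big(1/(qw)_m\big)=(\hbar w)_m/(qw)_m$. Comparing with the PSZ building block $\{1/w\}_m=\frac{(\hbar w)_m}{(qw)_m}(-q^{1/2}\hbar^{-1/2})^m$ from \cite{PSZ} shows that each balanced factor equals the corresponding $\{\,\cdot\,\}$-factor of $V_x^d$ up to an explicit monomial prefactor in $q$ and $\hbar$. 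The remaining step is bookkeeping: collecting these prefactors over all factors of $I_d|_x$ and checking that their product is exactly the overall normalization $q^{nd/2}$ together with the $(-q^{1/2}\hbar^{-1/2})^{\bullet}$ weights in the PSZ formula, i.e. the factor that the statement absorbs into $Q$. Matching the two products degree by degree then yields $V_x^d=\mathcal{B}_{-q^{-1}\hbar}(I_d|_x)$ and hence the full identity. The main obstacle here is combinatorial: writing the localized $I$-function in precisely the right factored form — including the Cartan/diagonal factors indexed by $d_a-d_b$, which may be negative and so require the $\phi$-definition of $(x,q)_m$ for negative $m$ — and verifying that the accumulated prefactors assemble into the single PSZ normalization.

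For part (2), the first bullet is a direct computation. Writing $\mathcal{I}=\sum_d c_dQ^d$ with $c_d=\prod_i(qa_i)_d^{-1}$, I would use $q^{Q\partial_Q}Q^d=q^dQ^d$ and $Q\cdot Q^d=Q^{d+1}$ to compute that the coefficient of $Q^d$ in $\mathcal{D}\mathcal{I}$ is $c_d\prod_i(1-a_iq^d)-c_{d-1}$ for $d\ge1$ and $c_0\prod_i(1-a_i)$ for $d=0$. The $d=0$ term vanishes by the hypothesis $\prod_i(1-a_i)=0$, and for $d\ge1$ the factorization $(qa_i)_d=(qa_i)_{d-1}(1-a_iq^d)$ gives $c_d\prod_i(1-a_iq^d)=c_{d-1}$; hence every coefficient vanishes and $\mathcal{D}\mathcal{I}=0$.

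For the second bullet I would first record the operator identity $q^{Q\partial_Q}Q=qQ\,q^{Q\partial_Q}$, which moves $Q$ past the shift operators: $\prod_i(1-ya_iq^{Q\partial_Q})\,Q=Q\prod_i(1-yqa_iq^{Q\partial_Q})$. Because each $a_i$ is constant in $Q$, the factor $A=\prod_i(1-a_iq^{Q\partial_Q})$ and the $\mathcal{H}_y$-denominator commute, so $\mathcal{H}_y(\mathcal{D})\mathcal{B}_y(\mathcal{I})=0$ is equivalent, after clearing the invertible denominator, to a single coefficientwise recursion for the balanced coefficients $\tilde c_d:=\mathcal{B}_y(c_d)$. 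The balanced factorization $\tilde c_d/\tilde c_{d-1}=\prod_i\frac{\lambda_y(a_iq^d)}{1-a_iq^d}$ then reduces the whole claim to one identity between the $\lambda_y$-numerator produced by balancing the class and the numerator produced by balancing the operator, with the relation $\prod_i(1-a_i)=0$ again furnishing the $d=0$ base case. The main obstacle — and the step that must be done with care — is exactly this compatibility: matching the $\lambda_y$-convention used in $\mathcal{B}_y$ against the $(1-\hbar M)$-convention in $\mathcal{H}_\hbar$, keeping precise track of the correspondence $a_iq^{Q\partial_Q}\leftrightarrow a_iq^{d}$ and the commutation factor of $q$, so that the two normalizations coincide under the common parameter $y$. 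Once this bookkeeping is pinned down the recursion closes and the annihilation follows.
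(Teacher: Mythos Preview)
Your approach is correct and essentially identical to the paper's: for part (1) the paper also first absorbs the normalization into $Q$ so that $\{x\}_d$ reduces to $(\hbar/x)_d/(q/x)_d$, and then matches the Pochhammer blocks of $V_x^d$ against those of $\mathcal{B}_{-q^{-1}\hbar}(I_d|_x)$ case by case (the diagonal $i=j$ factors, the off-block $i\le r<j$ factors, and the Cartan $i\ne j\le r$ factors combined with the $\{t_j/t_i\}_{d_i-d_j}^{-1}$ term), while for part (2) it likewise clears the $\mathcal{H}_y$-denominator and verifies the equivalent identity $\prod_i(1-a_iq^{Q\partial_Q})\mathcal{B}_y(\mathcal{I})=\prod_i(1-ya_iq^{Q\partial_Q})\,Q\,\mathcal{B}_y(\mathcal{I})$ coefficientwise, with $\prod_i(1-a_i)=0$ disposing of the $d=0$ term. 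One minor imprecision in your write-up: $I_d|_x$ is not itself a single product but a \emph{sum} over partitions $d_{i_1}+\cdots+d_{i_r}=d$ of such products, and both the balancing and the comparison with $V_x^d$ are carried out summand by summand --- your wording blurs this, but the argument is unaffected.
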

\begin{example}

 We now illustrate the theorem for $X = \mathbb{P}^{1}$. The $I$-function of $\mathbb{P}^{1}$ is:
 \[
 I(t;q,Q)=\sum_{d \geq 0}I_d(t;q)Q^d=\sum_{d \geq 0}\frac{Q^d}{\prod_{i=1}^n(qPt_i^{-1})_d},
 \]
 where $P = \mathcal{O}(-1)$ satisfies $(1-Pt_1^{-1})(1-Pt_2^{-1}) = 0$. The two torus-fixed points are $0 = [1:0]$ and $\infty = [0:1]$. Localization of the coefficient $Q^d$, denoted by $I_d(t;q)$, at $0$ is given by:
 \[
 \frac{1}{(1-q)\cdots(1-q^d)}\cdot\frac{1}{(1-q\frac{t_1}{t_2})\cdots(1-q^d\frac{t_1}{t_2})}.
 \]
Applying the balancing operator $\mathcal{B}_y$ yields (See \Cref{lambda calculation} for the collection of modules we need to consider):
\[
\frac{(1+yq)\cdots(1+yq^d)}{(1-q)\cdots(1-q^d)}\cdot\frac{(1+yq\frac{t_1}{t_2})\cdots(1+yq^d\frac{t_1}{t_2})}{(1-q\frac{t_1}{t_2})\dots(1-q^d\frac{t_1}{t_2})}.
\]
 Substituting $y = -q^{-1}\hbar$ gives:
 \[
 \frac{(1-\hbar )\cdots(1-\hbar q^{d-1})}{(1-q)\cdots(1-q^d)}\cdot\frac{(1-\hbar \frac{t_1}{t_2})\cdots(1-\hbar q^{d-1}\frac{t_1}{t_2})}{(1-q\frac{t_1}{t_2})\cdots(1-q^d\frac{t_1}{t_2})},
 \]
 which matches the localization of $V^d(t;q,\hbar)$ at $0$. The $Q$-difference operators annihilating the $I$-function of $\mathbb{P}^1$ are also well known. We will see their balancing in \Cref{Balancing of operators for projective space}.
 \end{example}
 Although the proof is purely algebraic, the underlying geometric insight is that
 balancing each localized coefficient $I_d$ of the $I$-function at a fixed point of $G(r,n)$ recovers the localization of the corresponding coefficient $V_d$ of the quasimap vertex function at the same fixed point of $T^*G(r,n)$, after substituting $y = -q^{-1}\hbar$.
 
 The central idea of the proof lies in the relationship between the localization of the $I$-function and the Quot scheme. As shown in \Cref{Khaste}, each localized coefficient $I_d(t;q)$ at a fixed point of $G(r,n)$ can be written as:
 \[
 \sum_x\frac{1}{\lambda_{-1}T^*_{x}Quot_{\mathbb{P}^1,d}(\mathbb{C}^n,r)},
 \]
 where the sum runs over certain $T \times \mathbb{C}^*$-fixed points $x$ of the Quot scheme. In \Cref{fixed-point lemma}, we compute the weight space decomposition of the cotangent space at these fixed points, and in \Cref{lambda calculation}, we determine the {\bf $\lambda_y$-balanced $K$-theoretic class} of each $I_d(t;q)$.  After defining the stable quasimap space and quasimap vertex function, we complete the proof in \Cref{Main results}. In effect, balancing amounts to twisting each fixed-point contribution by the cotangent weights, producing exactly the vertex contribution.

The quantum $K$-ring of Nakajima quiver varieties was defined in \cite{PSZ} using the moduli space of stable quasimaps. The relationship between the quantum $K$-theory of a variety and that of its cotangent bundle was previously explored for the full flag variety $\mathrm{Fl}(n)$ in \cite{MR4308933}, where the authors relate the quantum $K$-ring of $\mathrm{Fl}(n)$ and that of $T^*\mathrm{Fl}(n)$ by taking an appropriate limit. Our result establishes a direct connection between two key generating functions—the $I$-function of $G(r,n)$ and the quasimap vertex function of $T^*G(r,n)$—through the balancing operator.
 
 For general Grassmannian $G(r,n)$, difference operators annihilating the $I$-function are not known. However, by applying the abelian/nonabelian correspondence, we can abelianize $G(r,n)$ to $(\mathbb{P}^{n-1})^r$. In this abelian setting, an appropriate twist can be defined, allowing both a value of the big $J$-function, denoted by $\overline{J}^{tw}$, and the $q$-difference operators it satisfies to be computed. In \Cref{Bethe section}, we use the idea of the balancing to construct new $q$-difference operators that are satisfied by the $\lambda_y$ $K$-theoretic class of $\overline{J}^{tw}$. Using the abelian/nonabelian map, we prove that these equations, after a suitable change of variables, coincide with the Bethe Ansatz equations, central objects in integrable systems, which are expected to be the relations in the quantum $K$-theory of $T^*G(r,n)$. 

\textbf{Acknowledgments}. I am very grateful to my advisor Leonardo Mihalcea for introducing me to this area and sharing ideas with me and many related discussions. 

I would like to thank Leo Herr, Irit Huq-Kuruvilla, and Andrey Smirnov for many related discussions. This project was supported by NSF grant DMS-2152294.

\section{Algebra Preliminaries}
In the following three sections, we briefly present the purely algebraic perspective of our results. Later, we show that these algebraic constructions align with the geometric framework discussed throughout the paper.
\subsection{Balancing classes}   Let  $R:=\mathbb{C}[{t_1^{\pm 1}},\dots,{t_n ^{\pm 1}},q^{\pm1}]$ be the ring of  (Laurent) polynomials in the variables ${t_i}$, $1 \leq i \leq n$, and $q$. Define a multiplicative set $\mathcal{A} \subset R$ by:
\[
\mathcal{A} = \Bigl\{\prod_{\text{finite}}(1-P) : \text{$P$ is a monomial in $ R,\; P \ne 1$} \Bigr\},
\]
and define $U \subset R_\mathcal{A}$ by:
\[
U = \Bigl\{\frac{1}{u}: u \in \mathcal{A} \Bigr\}.
\]
Let $y$ be a formal variable. Define the balancing operator $\mathcal{B}_y$ on $U$ by:
\[
\mathcal{B}_y(\frac{1}{\prod_{\text{finite}}(1-P)}) = \frac{\prod_{\text{finite}}(1+yP)}{\prod_{\text{finite}}(1-P)}.
\]
Extend $\mathcal{B}_y$ additively to the monoid over $\mathbb{N}$ generated by $U$, and linearly to the ring of power series in the variable $Q$, namely $U[[Q]]$. More precisely, if $\sum_{d \geq 0}A_d Q^d  \in U[[Q]]$, we set:
\[
\mathcal{B}_y(\sum_{d \geq 0}A_d Q^d) = \sum_{d\geq 0} \mathcal{B}_y(A_d)Q^d.
\]

Consider the following function, which appears in the study of the geometry of $G(r,n)$ (see \Cref{sec:Quot scheme} and \Cref{J-function section} for details):
\[
I(t;q,Q) = \sum_{d=d_1+\dots+d_r} I_d(t;q) Q^d   \quad \/;
\]
where $d_i \geq 0$ and:
\[
I_d(t;q)=\frac{1}{\prod_{i=1}^r \prod_{j=r+1}^n\prod_{m=1}^{d_i}(1-\frac{t_i}{t_j}q^{m})(1-q^m)\prod_{i,j=1}^r \prod_{m=-d_i}^{-d_i+d_j-1}(1-\frac{t_i}{t_j}q^{-m})}.
\]
In \Cref{sec:Quot scheme}, we will see that this function appears as a localization of a $K$-theoretic class on $G(r,n)$, namely the $J$-function (which coincides with the $I$-function).

Now, if we apply $\mathcal{B}_y$ to this power series, we define the {\em balanced $I$-function}:
\[
I(t;q,Q,y):=\mathcal{B}_y(I(t;q,Q)) .\]
Writing $\mathcal{B}_y I(t;q,Q)  = \sum_{d \geq 0} I_d(t;q,y)Q^d$
then $I_d(t;q,y):=\mathcal{B}_y(I_d(t;q))$, and we compute:
\[
\mathcal{B}_y(I_d(t;q)) = \frac{\prod_{i=1}^r \prod_{j=r+1}^n\prod_{m=1}^{d_i}(1+y\frac{t_i}{t_j}q^{m})(1+y q^m)\prod_{i,j=1}^r \prod_{m=-d_i}^{-d_i+d_j-1}(1+y\frac{t_i}{t_j} q^{-m})}{\prod_{i =1}^r \prod_{j =r+1}^n\prod_{m=1}^{d_i}(1-\frac{t_i}{t_j}q^{m})(1-q^m)\prod_{i,j=1}^r \prod_{m=-d_i}^{-d_i+d_j-1}(1-\frac{t_i}{t_j}q^{-m})}.
\]

Another function that appears in the study of the geometry of $T^*G(r,n)$, and we discuss the details in \Cref{vertex function}, is the {\em vertex function}:
\[
V= V(t;q,Q,\hbar) = \sum_{d_1+\dots+d_r=d}V_d(t;q,\hbar) Q^d \quad \/;
\]
where $d_i \geq 0$ and
\[
V_d=V_d(t;q,\hbar) = q^{nd/2}\prod_{i,j=1}^r \{\frac{t_j}{t_i}\}_{d_i-d_j}^{-1}.\prod_{i=1}^{r}\prod_{j=1}^{n}\{\frac{t_j}{t_i}\}_{d_i} \quad \/;
\]
\[
\]
 \[
   \{x\}_d = \frac{(\frac{\hbar}{x})_d}{(\frac{q}{x})_d}.
 \]
 Here $(x)_d$ is the Pochhammer symbol defined by: 
\begin{equation}\label{Pochammer Symbols}
(x)_d:=\frac{\phi(x)}{\phi(xq^d)}  \quad \/, \quad \phi(x) = \prod_{i=0}^{\infty}(1-xq^i).
\end{equation}

In \Cref{Main results}, we will prove the following theorem:

    \begin{thm}\label{thm:V=balI} The quasimap vertex function $V$ and the balanced $I$-function are equal, up to the change of variables
   $y = -q^{-1} \hbar$:
   \[ V(t;q,Q,\hbar)= I(t; q, Q,-q^{-1} \hbar)\]
   Equivalently, for each $d \ge 0$, 
   \[ V_d(t;q,\hbar)= I_d(t; q, -q^{-1} \hbar) \/.\]
\end{thm}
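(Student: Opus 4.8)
The plan is to reduce the identity to a purely combinatorial comparison of the three product blocks appearing in $\mathcal{B}_y(I_d)$ and in $V_d$, carried out factor by factor after the substitution $y = -q^{-1}\hbar$. First I would record the explicit form of $\mathcal{B}_y(I_d(t;q))$ already computed in the excerpt, and separately expand $V_d(t;q,\hbar)$ by unpacking the symbols $\{x\}_d$ and the Pochhammer symbols $(x)_d = \phi(x)/\phi(xq^d)$. Writing $\{x\}_d = (\hbar/x)_d/(q/x)_d$, each factor $\{t_j/t_i\}_{d}$ becomes a ratio of finite products $\prod_{m=0}^{d-1}(1 - \hbar q^m t_i/t_j)/\prod_{m=0}^{d-1}(1 - q^{m+1} t_i/t_j)$, using the telescoping identity $(x)_d = \prod_{m=0}^{d-1}(1 - xq^m)$ that follows from \eqref{Pochammer Symbols}.

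Next I would substitute $y = -q^{-1}\hbar$ into the balanced expression. Under this substitution a generic numerator factor $(1 + y\,q^m t_i/t_j)$ becomes $(1 - \hbar q^{m-1} t_i/t_j)$, so the shift in exponent exactly matches the shift between the $\{x\}_d$-numerator (which carries $\hbar q^m$ starting at $m=0$) and the convention in the balanced product (which starts at $m=1$). The denominators require no substitution and I would check that the factors $(1 - q^m)$, $(1 - q^m t_i/t_j)$, and the mixed factors $(1 - q^{-m} t_i/t_j)$ coming from the $i,j$ double product in $I_d$ line up with the $(q/x)_d$ denominators of the $V_d$ symbols after reindexing. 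The two structurally distinct blocks—the $j \in \{r+1,\dots,n\}$ block versus the $i,j \in \{1,\dots,r\}$ block—correspond, respectively, to the single product $\prod_{i=1}^r\prod_{j=1}^n\{t_j/t_i\}_{d_i}$ (after separating the $j \le r$ and $j > r$ ranges) and to the ratio $\prod_{i,j=1}^r\{t_j/t_i\}_{d_i-d_j}^{-1}$ in $V_d$.

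The main obstacle I expect is the bookkeeping in the $i,j \in \{1,\dots,r\}$ block, where the balanced $I$-function has a product over the range $-d_i \le m \le -d_i + d_j - 1$ with the peculiar exponent $q^{-m}$, while $V_d$ carries the factor $\{t_j/t_i\}_{d_i-d_j}^{-1}$ whose Pochhammer index $d_i - d_j$ can be negative. I would handle this by using the analytic continuation of the Pochhammer symbol to negative indices, namely $(x)_{-d} = 1/(xq^{-d})_d$ as forced by \eqref{Pochammer Symbols}, and verify that reindexing $m \mapsto -m$ turns the balanced range into precisely the range produced by $\{t_j/t_i\}_{d_i-d_j}$, with the negative-index convention absorbing the sign in the exponent. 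A secondary bookkeeping point is the overall prefactor $q^{nd/2}$ appearing in $V_d$: I would track where it arises, noting that the symbols $\{x\}_d$ as defined in this algebraic section omit the $(-q^{1/2}\hbar^{-1/2})^d$ factor present in the introduction, so the $q^{nd/2}$ must either cancel against contributions from the substitution or be shown to be absorbed consistently into the definition used here. Once each of the three blocks is matched factor-by-factor and the prefactor is accounted for, equality of the $Q^d$-coefficients follows for all $d \ge 0$, and summing over $d$ gives the stated identity $V(t;q,Q,\hbar) = I(t;q,Q,-q^{-1}\hbar)$.
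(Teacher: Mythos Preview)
Your proposal is correct and takes essentially the same approach as the paper. The paper's proof at this point simply reads ``This is a tedious algebraic check,'' and the detailed version given later in \Cref{main thm} does exactly what you describe: split the $V_d$ side into the three blocks (the $i=j$ terms, the $1\le i\le r,\; r+1\le j\le n$ terms, and the remaining $i\neq j$ terms with $i,j\le r$), expand the $\{x\}_d$ symbols into Pochhammer products, and match each block against the corresponding block of $\mathcal{B}_y(I_d)$ after the substitution $y=-q^{-1}\hbar$; your anticipated bookkeeping with negative Pochhammer indices and the $q^{nd/2}$ prefactor is precisely what the paper handles (the latter via the change of variables $Q\mapsto (-q/\hbar^{1/2})^n Q$ discussed after \Cref{lemma:proj}). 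One minor organizational difference: in the third block the paper combines $\{t_j/t_i\}_{d_i}$ for $i\neq j\le r$ with $\{t_j/t_i\}_{d_i-d_j}^{-1}$ into a single product $\{t_j/t_i\}_{d_i-d_j}^{-1}\{t_j/t_i\}_{d_i}$ before comparing, which avoids invoking the analytic continuation of the Pochhammer symbol directly.
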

\begin{proof} This is a tedious algebraic check.\end{proof}
In \Cref{Main results}, we will re-prove this theorem after interpreting each of the terms
of $J$ and $V$ in terms of equivariant localization. The balanced $I$-function will arise from balancing 
localization terms on the Quot Scheme of the Grassmannian, while the vertex function $V$ arises from 
the localization data on the moduli space of quasimaps to $T^*G(r,n)$. The equality of the two functions
will be a consequence of the {\em termwise} equality of localization data.

\subsection{Balancing Operators}\label{operator for operators}
In this section, we develop algebraic constructions for operators acting on $K$-theoretic classes. We first define a polynomial ring $R$ and an algebra $S$ over $R$. Next, we define an operator on a certain subset $\mathcal{C}$ of $S$. The variables and the subset $\mathcal{C}$ are chosen so that the operators annihilating the $J$-functions can be expressed using them. 

        Let $R := \mathbb{C}[P,t_1^{-1},\dots,t_n^{-1},q,Q]$ be the ring generated by variables $P$, $t^{-1}=(t_1^{-1},\dots,t_n^{-1})$, together with the independent variables $q$, $Q$. Define $S$ as the algebra over $R$ generated by $q$-difference operators $q^{\pm Q \partial_{Q}}$:
        \[
        S:=R[q^{\pm Q \partial_{Q}}],
       \]
       where $q^{\pm Q \partial_{Q}}$ acts on functions of $Q$ by:
        \begin{equation} \label{diff op}
        f(Q) \mapsto f(qQ).
        \end{equation}
        The action of $P$, $t_i^{-1}$'s and $q$ on difference operators is by multiplication, while $Q$ acts only on $q^{Q \partial_{Q}}$ as follows:
        \[
        Qq^{Q \partial_{Q}} = q^{-1}q^{Q \partial_{Q}}Q.
        \]
        Therefore, for monomials we have:
        \[
Q^{a} (q^{Q \partial_{Q}})^{b}
=  q^{-a-b} (q^{Q \partial_{Q}})^{b} Q^{a}.
\]

        Using this rule, we can write any element of $S$ in a canonical way:
        \[
        \sum_{b \in \mathbb{Z}}\sum_{a \in \mathbb{N}} c_a(t^{-1}, P^{\pm 1},q)(q^{Q \partial_{Q}})^{b}Q^a,
        \]
        where we mean that the variable $Q$ can be moved to the right of each monomial.
        Define $\mathcal{C} \subset S$
        by:
        \[
        \mathcal{C}:=\{\prod_{\text{finite}}(1-M): \text{$M$ is a monomial of positive degree in $q^{\pm Q \partial_{Q}}$}\}.
        \]
         
       Let $\hbar$ be a formal variable. For any difference operator $1-M \in \mathcal{C}$, we define a new difference operator $1-\hbar M$ as an element of the ring of power series $T=R[q^{Q_1 \partial_{Q_1}},\dots,q^{Q_r \partial_{Q_r}}][[\hbar]]$. Since the constant term is 1, this difference operator has an inverse in $T$. More precisely, for a monomial of positive degree $M \in S$, we define:
       \[
       \frac{1}{(1-\hbar M)}:=(1-\hbar M)^{-1} = \sum_{i=0}^{\infty}\hbar^{i}M^i,
       \]
       which is an element of $T$.
       
       Define the balancing operator $\mathcal{H}_{\hbar}$ on $\mathcal{C} \oplus R$, where on $\mathcal{C}$ is given by:
       \begin{equation}\label{H-operator}
          \mathcal{H}_{\hbar}\left(\prod_{\text{finite}}(1-M_i)\right) = \frac{\prod_{\text{finite}}(1-M_i)}{\prod_{\text{finite}}(1-\hbar M_i)} = \prod_{\text{finite}}(1-\hbar M_i)^{-1}\prod_{\text{finite}}(1-M_i) ,
       \end{equation}
       and on $R$ is the identity:
       \[
       \mathcal{H}_{\hbar}(L) = L,
       \]
       for $L \in R$.

\section{Equivariant K-theory}
\subsection{Preliminaries}
In this section, we review some necessary facts about the equivariant $K$-theory of smooth projective varieties. See for example, \cite{NIE, CG}.

Let $X$ be a smooth projective variety with an action of a torus $T$. We denote by $K_T(X)$ the Grothendieck ring generated by symbols $[E]$, where
$E$ is a $T$-equivariant vector bundle on $X$, subject to the relations $[E] = [F] + [G]$ whenever there exists a short exact sequence of $T$-equivariant vector bundles:  
\[
0 \to F \to E \to G \to 0.
\]
The additive structure on $K_T(X)$ is given by the direct sum of vector bundles, and the multiplicative structure is given by the tensor product.

Since $X$ is smooth, any $T$-equivariant coherent sheaf admits a finite resolution by locally free sheaves. Hence, $K_T(X)$ coincides with the Grothendieck ring of $T$-equivariant coherent sheaves on $X$, denoted by $K_0^T(X).$
In the special case where $X=\mathrm{pt}$, we have $K_T(X) = \mathbb{Z}[{t_1^{\pm 1}},\dots,{t_n^{\pm 1}}]$, where $T \cong (\mathbb{C}^*)^n$ and ${t_i}$ are characters of $T$.

Let $f : X \to Y$ be a $T$-equivariant morphism between smooth $T$-varieties. Then there is a pullback homomorphism:
\[
f^*: K_T(Y) \to K_T(X), \quad [E] \mapsto [f^*E],
\]
where $E$ is a $T$-equivariant vector bundle on $Y$. Note that smoothness of $Y$ is not required for this definition.
 
 If $f$ is proper, there is also a push forward map:  
 \[
 f_*:K_T(X) \to K_T(Y),
 \]
 defined by
\begin{align*}
f_*([\mathcal{F}]) = \sum_i (-1)^iR^i [f_*(\mathcal{F})],
\end{align*}
where $\mathcal{F}$ is a $T$-equivariant coherent sheaf on $X$, and $R^if_*(\mathcal{F})$ denotes the higher direct image functor.

If $Y = \mathrm{pt}$, the push forward reduces to the $T$-equivariant Euler characteristic of $\mathcal{F}$:
\begin{align*}
    f_*(\mathcal{F}) = \chi(X,\mathcal{F}) := \int_X \mathcal{F} = \sum_i (-1)^i[H^i(X,\mathcal{F})] ,
\end{align*}
where each $H^i(X,\mathcal{F})$ carries a natural $T$-representation structure.
The $T$-equivariant $K$-theoretic Poincar\'e pairing is defined as follows:
\begin{align*}
    &K_T(X) \times K_T(X) \to K_T(\mathrm{pt}); \\&
    \langle\mathcal{E} , \mathcal{F}\rangle \mapsto \int_X \mathcal{E} \otimes_{\mathcal{O}_X} \mathcal{F}.
\end{align*}

In $K$-theory, the (Hirzebruch) $\lambda_y$ class is defined by: 
\[ \lambda_y(E) := 1 + y [E] + y^2 [\wedge^2 E] + \dots + y^e  [\wedge^e E] \in \K_T(X)[y]. \] 
This class was introduced by Hirzebruch \cite{MR1335917} in order to help with the 
formalism of the Grothendieck-Riemann-Roch theorem. It may be thought of as the 
$K$-theoretic analogue of the (cohomological) Chern polynomial: 
\[ c_y(E)= 1+ c_1(E) y + \ldots + c_e(E) y^e\] 
of the bundle $E$. 
The $\lambda_y$ class is multiplicative with respect to short exact sequences, that is, if: 
\[ \begin{tikzcd} 0 \arrow[r] & E_1 \arrow[r] & E_2 \arrow[r] & E_3 \arrow[r] & 0 \end{tikzcd} \] 
is such a sequence of vector bundles, then: 
\[ \lambda_y(E_2) = \lambda_y(E_1) \cdot \lambda_y(E_3) \/; \] 
cf.~\cite{MR1335917}. 
A particular case of this construction is when $V$ is a (complex) vector space with an action 
of a complex torus $T$, 
and with weight decomposition 
$V = \oplus_i V_{\mu_i}$, where each $\mu_i$ is a weight.
The {\em character} of $V$ is the element $\ch_T(V):=\sum_i \dim V_{\mu_i} {\mu_i}$, regarded  in $\K_T(\mathrm{pt})$.
The  $\lambda_y$-class of $V$ is the element 
$\lambda_y(V) =\sum_{i \ge 0} y^i ch_T(\wedge^i V) \in \K_T(\mathrm{pt})[y]$.
From the multiplicativity property of the $\lambda_y$-class it follows that 
\[\lambda_y(V) = \prod_i (1+y {\mu_i})^{\dim V_{\mu_i}} \/;\] 
see \cite{MR1335917}.

A key computational tool in equivariant $K$-theory is the Atiyah-Bott Localization formula from \cite{NIE}, recalled next.
Let $X$ be a proper, smooth scheme with a $T$-action such that the fixed-point set $X^T$ is finite. 
Let $\iota:X^T \to X$ 
be the inclusion, and consider the pullback map: 
$\iota^*: K_T(X) \to K_T(X^T)$. For $x \in X$, denote $\alpha|_x := \iota_x^*(\alpha)$, where $\iota_x: \{x\} \hookrightarrow X$ is the inclusion. Then the following holds.
\begin{thm}\label{Localization-Theorem} Let $\alpha \in K_T(X)$ be a class in the equivariant $K$ theory of $X$. Then,
\[
   \alpha  = \sum_{x \in X^T}\frac{\alpha\arrowvert_x}{\lambda_{-1}(T_x^*X)}[x] . 
\]
In particular, 
\begin{align*}
    \int_{X} \alpha = \sum_{x \in X^T}\frac{\alpha\arrowvert_x}{\lambda_{-1}(T_x^*X)}
\end{align*}
\end{thm}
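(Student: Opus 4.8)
The plan is to deduce the formula from the localization (concentration) theorem together with the $K$-theoretic self-intersection formula. First I would pass to a localized coefficient ring: let $\mathcal{S} \subset K_T(\mathrm{pt})$ be the multiplicative set generated by the elements $1 - \mu$, where $\mu$ ranges over the nontrivial characters of $T$, and set $K_T(X)_{\loc} := K_T(X) \otimes_{K_T(\mathrm{pt})} K_T(\mathrm{pt})_{\mathcal{S}}$. The crucial input, which I would quote from \cite{NIE}, is Thomason's concentration theorem: the proper pushforward $\iota_* : K_T(X^T)_{\loc} \to K_T(X)_{\loc}$ along the inclusion of the fixed locus is an isomorphism. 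Because $X^T = \{x_1, \dots, x_m\}$ is finite, $K_T(X^T) = \bigoplus_{x \in X^T} K_T(\mathrm{pt})\,[x]$, so over the localized ring any class can be written uniquely as $\alpha = \sum_{x} \beta_x\, \iota_{x,*}(1)$ with $\beta_x \in K_T(\mathrm{pt})_{\mathcal{S}}$; here $\iota_{x,*}(1) = [x]$ is exactly the class appearing in the statement.

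Next I would pin down the coefficients $\beta_x$ by restricting to the fixed points. The key computation is the self-intersection formula: for a fixed point $x$, the composite $\iota_x^* \iota_{x,*}$ is multiplication by $\lambda_{-1}(N_{x/X}^*)$, the $\lambda_{-1}$-class of the conormal bundle. Since $X^T$ is finite the point is isolated, so $N_{x/X}$ is the full tangent space $T_x X$ and $\iota_x^* \iota_{x,*}(1) = \lambda_{-1}(T_x^* X)$. For two distinct fixed points $x \ne x'$ the images of $\iota_x$ and $\iota_{x'}$ are disjoint, whence $\iota_{x'}^* \iota_{x,*}(1) = 0$. Restricting $\alpha$ to $x$ therefore collapses the sum to a single term, giving $\alpha|_x = \beta_x \cdot \lambda_{-1}(T_x^* X)$.

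At this point I would verify that the denominators are invertible. Writing $T_x X = \bigoplus_i \C_{\mu_i}$ as a $T$-representation, multiplicativity of $\lambda_{-1}$ over short exact sequences gives $\lambda_{-1}(T_x^* X) = \prod_i (1 - \mu_i^{-1})$; because $x$ is isolated, none of the weights $\mu_i$ is trivial, so each factor lies in $\mathcal{S}$ and the product is a unit in $K_T(\mathrm{pt})_{\mathcal{S}}$. Hence I may solve $\beta_x = \alpha|_x / \lambda_{-1}(T_x^* X)$, and substituting back yields
\[
\alpha = \sum_{x \in X^T} \frac{\alpha|_x}{\lambda_{-1}(T_x^* X)}\,[x],
\]
which is the stated identity in $K_T(X)_{\loc}$. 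The integral version then follows by pushing forward along $\pi : X \to \mathrm{pt}$ and using $\pi_*[x] = \chi(x, \cO_x) = 1$.

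The main obstacle is the concentration theorem itself, namely that $\iota_*$ becomes an isomorphism after inverting $\mathcal{S}$; this is the substantial input, and I would invoke it from \cite{NIE} rather than reprove it. Once it is granted, everything else is the standard self-intersection computation together with the observation that isolated fixed points force all tangent weights to be nontrivial, which is precisely what makes the denominators units. The only subtlety worth flagging is the distinction between the integral ring and the localized one: the displayed formula is an identity in $K_T(X)_{\loc}$, and the coefficients $\alpha|_x / \lambda_{-1}(T_x^* X)$ are read off there.
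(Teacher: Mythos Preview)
The paper does not prove this theorem; it is stated as background, attributed to \cite{NIE}, and used without argument. Your sketch is the standard proof via Thomason's concentration theorem and the self-intersection formula, and it is correct. Since there is no proof in the paper to compare against, the only remark is that your write-up already goes well beyond what the paper supplies.
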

\subsection{Balanced $K$-theoretic classes} \label{Balanced class} In this section, we introduce the notion of balanced classes, a central concept used throughout the paper. This notion plays a crucial role in relating the $K$-theoretic functions of $T^*G(r,n)$ and $G(r,n)$ via multiplication by $\lambda_y$-classes. 
The notion of balanced classes has been previously discussed in works such as \cite{Givental:2020, MR3752463} . 
We take a different approach, 
in which classes are balanced via multiplication with $\lambda_y$-classes.

Fix a set of $T \times \mathbb{C}^*_q$-modules $\{V_i : \lambda_{-1}V_i \neq 0 \}_{i \in \Lambda}$ for some index set $\Lambda$, and define $A \subset K_{T \times \mathbb{C}^*_q}(\mathrm{pt})$ to be the multiplicative set generated by the elements $\lambda_{-1}V_i$, for each $i \in \Lambda$.

Let $K_{T \times \mathbb{C}^*_q}(\mathrm{pt})_{loc}$ denote the localization of $K_{T \times \mathbb{C}^*}(\mathrm{pt})$ at $A$. Define $U \subset K_{T \times \mathbb{C}^*}(\mathrm{pt})_{loc}$ as:
\[
 U=\{\prod_{i=1}^m\frac{1}{u_i} : u_i =\lambda_{-1}V_i \quad \text{for distinct i $\in \Lambda$, $m < \infty$}\}.
 \]
 Define  
\[
\mathcal{B}_y \colon U \to K_{T \times \mathbb{C}^*}(\mathrm{pt})_{\mathrm{loc}}[y]
\]
 by:
 \[
 \frac{1}{\prod_{i=1}^m\lambda_{-1}V_i} \mapsto  \frac{\prod_{i=1}^m\lambda_{y}V_i}{\prod_{i=1}^m\lambda_{-1}V_i}.
 \]
 Extend $\mathcal{B}y$ additively to the monoid $\mathbb{N}\langle U \rangle$ and naturally to power series in $U[[Q]]$: for $\sum{d \ge 0} A_d Q^d \in U[[Q]]$, set:
 \[
 \mathcal{B}_y\Bigl(\sum_{d \ge 0} A_d Q^d\Bigr) = \sum_{d \ge 0} \mathcal{B}_y(A_d) Q^d.
 \]
 Let $X$ be a $T$-variety with $| X^T| < \infty$. We equip $X$ with an additional $\mathbb{C}^*$-action that acts trivially on $X$. Then, $K_{T \times \mathbb{C}^*}(X) = K_T(X)[q,q^{-1}]$, where we interpret $q$ geometrically as $K_{\mathbb{C}^*}(\mathrm{pt}) = \mathbb{C}[q,q^{-1}]$.

\begin{definition}\label{balancing definitions}
\begin{enumerate}
    \item  Let $\gamma \in K_{T}(X)[[q]]$. We say $\gamma$ is \textbf{U-good} if, for every fixed point $x \in X^T$, $\gamma_{\arrowvert_x}$ is a rational function in $\mathbb{N}\langle U \rangle$.
    \item For a U-good class $\gamma$, we define its {\bf $\lambda_y$-balanced $K$-theoretic class at the fixed point $x$} as $\mathcal{B}_y(\gamma_{{\arrowvert}x})$.
    \item For a U-good class $\gamma$, we define its {\bf $\lambda_y$-balanced $K$-theoretic class}, denoted by $\mathcal{B}_y(\gamma)$ whenever it exists, to be the class such that $\mathcal{B}_y(\gamma)_{{\arrowvert}x} = \mathcal{B}_y(\gamma_{{\arrowvert}x})$, for all $x \in X^T$
    \end{enumerate}
\end{definition}

\begin{example}
    Let $T=(\mathbb{C}^*)^2$ act diagonally on $\mathbb{P}^1$ by scaling coordinates with weights $t_1$ and $t_2$. The group $\mathbb{C}^*_q$ acts by $q \cdot [x:y] = [qx:y]$. Let $q\mathcal{O}(-1) \in K_{T \times \mathbb{C}^*}(\mathbb{P}^1)$. By restricting this class to the fixed points $0$ and $\infty$, we obtain:
    \[
     q\mathcal{O}(-1)_{{\arrowvert}_0} = qt_1, \quad q\mathcal{O}(-1)_{{\arrowvert}_{\infty}} = qt_2.
    \]
     Let $V_1, V_2$ be one-dimensional $T \times \mathbb{C}^*_q$-modules defined by the weights $qt_1$ and $qt_2$, respectively. 
    Then $U$ is defined by $\{ \frac{1}{1-qt_1}, \frac{1}{1-qt_2} ,\frac{1}{(1-qt_1)(1-qt_2)},1 \}$. Let $\gamma = \frac{1}{1-q\mathcal{O}_{\mathbb{P}^1}} \in K_{T}(\mathbb{P}^1)[[q]]$. We have:
    \[
    \gamma_{{\arrowvert}_{0}} = \frac{1}{1-qt_1}; \quad \gamma_{{\arrowvert}_{\infty}} = \frac{1}{1-qt_2}.
    \]
    Hence, $\gamma$ is a U-good class, and:
    \[
    \mathcal{B}_y(\gamma_{{\arrowvert}_{0}}) = \frac{1+yqt_1}{1-qt_1}, \quad \mathcal{B}_y(\gamma_{{\arrowvert}_{\infty}}) = \frac{1+yqt_2}{1-qt_2}, \quad \mathcal{B}_y(\gamma) = \frac{1 +yq\mathcal{O}(-1)}{1-q\mathcal{O}(-1)}.
    \]
\end{example}
The balancing of operators defined in \Cref{operator for operators} and $K$-theoretic functions is compatible in some cases as shown in the following theorem.
       \begin{thm}
    Let $\mathcal{I} = \sum_{d \geq 0}\frac{1}{\prod_{i=1}^n(qa_i)_d}Q^d$; here $a_i \in K_T(X)$, and for each $1 \leq i \leq n$, $a_i|_{x} \in K_{T}(\mathrm{pt})$ is a monomial for every $x \in X^T$, with $\prod_{i=1}^n(1-a_i) = 0$ (see \Cref{Pochammer Symbols}). In the definition of $\mathcal{B}_y$, let $U$ be determined by the collection of one-dimensional $T \times \mathbb{C}^*_q$-modules with weights given by:
        \[
        \big\{q^l \chi: \text{$\chi$ is a weight of $a_{i}\arrowvert_x$ for $x \in X^T$, $1 \leq i \leq n$, and $l \in \mathbb{N}$ }\big\}.
        \] 
        Define the operator $\mathcal{D} =\prod_{i=1}^n (1-a_iq^{Q\partial _Q})-Q$. Then we have:
    \begin{itemize}
        \item $\mathcal{D} \mathcal{I} = 0$;
        \item $\mathcal{H}_y(\mathcal{D}) \mathcal{B}_y(\mathcal{I})=0$.
    \end{itemize}
\end{thm}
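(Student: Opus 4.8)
The plan is to reduce both assertions to \emph{termwise} identities in the $Q$-expansion, exploiting that both $q^{Q\partial_Q}$ and multiplication by $Q$ act transparently on monomials. Writing out the Pochhammer factors, $(qa_i)_d = \prod_{m=1}^d(1 - q^m a_i)$, so that $\mathcal{I} = \sum_{d\ge0} I_d Q^d$ with $I_d = \prod_{i=1}^n\prod_{m=1}^d(1-q^m a_i)^{-1}$ and $I_0 = 1$. The two mechanical facts I will use throughout are that $q^{Q\partial_Q}$ is diagonal on powers, $q^{Q\partial_Q}Q^d = q^d Q^d$, so any polynomial in $q^{Q\partial_Q}$ acts on $Q^d$ by a scalar, and that $Q$ simply shifts the index, $Q\cdot Q^d = Q^{d+1}$.

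\emph{First claim.} Applying $\mathcal{D} = \prod_{i=1}^n(1 - a_iq^{Q\partial_Q}) - Q$ to $\mathcal{I}$ and collecting the coefficient of $Q^d$ gives, for $d\ge 1$,
\[
\Big(\prod_{i=1}^n(1 - a_iq^d)\Big)I_d - I_{d-1},
\]
and for $d=0$ it gives $\prod_{i=1}^n(1-a_i)\,I_0$. The $d=0$ term vanishes by the hypothesis $\prod_{i=1}^n(1-a_i)=0$ together with $I_0=1$. For $d\ge1$, the Pochhammer product form yields the ratio $I_{d-1}/I_d = \prod_{i=1}^n(1 - q^d a_i)$, which is exactly the vanishing of the coefficient. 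Hence $\mathcal{D}\mathcal{I}=0$; this step is routine.

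\emph{Second claim.} I first rewrite $\mathcal{H}_y(\mathcal{D})$. Since $\prod_{i=1}^n(1 - a_iq^{Q\partial_Q})\in\mathcal{C}$ (each factor is $1-M_i$ with $M_i=a_iq^{Q\partial_Q}$ a monomial of positive degree in $q^{Q\partial_Q}$) and $-Q\in R$, additivity of $\mathcal{H}_y$ on $\mathcal{C}\oplus R$ gives
\[
\mathcal{H}_y(\mathcal{D}) = \Big(\prod_{i=1}^n(1 - y a_iq^{Q\partial_Q})\Big)^{-1}\prod_{i=1}^n(1 - a_iq^{Q\partial_Q}) - Q.
\]
Set $N := \prod_{i=1}^n(1 - a_iq^{Q\partial_Q})$ and $D := \prod_{i=1}^n(1 - ya_iq^{Q\partial_Q})$; the latter has constant term $1$ as a series in $y$, hence is invertible in $T$, and it commutes with $N$. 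Therefore $\mathcal{H}_y(\mathcal{D})f=0$ is equivalent, after left-multiplication by $D$, to $(N - DQ)f = 0$, and I verify the latter for $f=\mathcal{B}_y(\mathcal{I})$. By the definition of $\mathcal{B}_y$ through $\lambda_y$-classes of the one-dimensional modules of weight $q^m a_i$, the balanced coefficients are
\[
\mathcal{B}_y(\mathcal{I}) = \sum_{d\ge0} J_d Q^d, \qquad J_d = \prod_{i=1}^n\prod_{m=1}^d\frac{1 + yq^m a_i}{1 - q^m a_i}, \qquad J_0=1.
\]
Extracting the coefficient of $Q^d$ in $(N-DQ)\mathcal{B}_y(\mathcal{I})$ — using that $N$ acts on $Q^d$ by $\prod_i(1-a_iq^d)$ and that $DQ$ contributes $J_{d-1}$ times the scalar by which $D$ acts on $Q^d$ — collapses everything to the single termwise identity $\big(\prod_{i=1}^n(1-a_iq^d)\big)J_d = J_{d-1}\cdot\big(\text{scalar of }D\text{ on }Q^d\big)$ for $d\ge1$, with the $d=0$ term again killed by $\prod_i(1-a_i)=0$ and $J_0=1$.

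The heart of the argument, and the one place demanding genuine care, is this cancellation. The balanced ratio is $J_{d-1}/J_d = \prod_{i=1}^n\frac{1-q^d a_i}{1+yq^d a_i}$, so the termwise identity holds precisely when the denominator factors $D$ attaches to the shift of $Q^d$ are the \emph{reciprocals} of the $\lambda_y$-factors that $\mathcal{B}_y$ attaches to $I_d$; that is, the identity demands the factor $\prod_i(1 + yq^d a_i)$. Matching this against the literal definition $\mathcal{H}_y(1-M)=(1-M)/(1-yM)$ therefore requires tracking the sign convention relating the $+y\chi$ appearing in $\lambda_y(V)=1+y\chi$ to the $1-yM$ appearing in $\mathcal{H}_y$ — in effect a $y\mapsto -y$ normalization between the balancing of classes and the balancing of operators. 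Once these factors are aligned, both the base case (via $\prod_i(1-a_i)=0$) and the inductive ratio identity are immediate, and the termwise vanishing yields $\mathcal{H}_y(\mathcal{D})\mathcal{B}_y(\mathcal{I})=0$. I expect this sign/normalization bookkeeping, rather than any conceptual difficulty, to be the main obstacle.
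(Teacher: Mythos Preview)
Your argument is essentially identical to the paper's: both parts are proved by comparing coefficients of $Q^d$, using that $q^{Q\partial_Q}$ acts on $Q^d$ by $q^d$ and that the Pochhammer ratio $I_{d-1}/I_d=\prod_i(1-q^da_i)$ (and its balanced analogue) telescopes away the operator factor; the $d=0$ term is killed by the hypothesis $\prod_i(1-a_i)=0$. For the second claim the paper also clears the denominator $\prod_i(1-ya_iq^{Q\partial_Q})$ and proves the equivalent identity $\big(\prod_i(1-a_iq^{Q\partial_Q})\big)\mathcal B_y(\mathcal I)=\big(\prod_i(1-ya_iq^{Q\partial_Q})\big)Q\,\mathcal B_y(\mathcal I)$, exactly your $N\mathcal B_y(\mathcal I)=DQ\,\mathcal B_y(\mathcal I)$.

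Your suspicion about the sign is well founded and is not merely cosmetic. With the paper's definition $\mathcal B_y\bigl(\frac{1}{1-P}\bigr)=\frac{1+yP}{1-P}$ one gets $J_d=\prod_i\prod_{m=1}^d\frac{1+yq^m a_i}{1-q^m a_i}$, whereas $\mathcal H_y$ is defined with $1-yM$ in the denominator; the termwise check then requires $\prod_i(1+yq^d a_i)=\prod_i(1-yq^d a_i)$, which is false. The paper's proof sidesteps this by writing $\mathcal B_y(\mathcal I)=\sum_d\frac{\prod_i(yqa_i)_d}{\prod_i(qa_i)_d}Q^d$, i.e.\ with numerator $\prod_m(1-yq^m a_i)$, so that the identity $(1-yq^d a_i)\cdot (yqa_i)_{d-1}=(yqa_i)_d$ makes the two sides match on the nose. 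In other words, the paper tacitly uses the $y\mapsto -y$ normalization you flagged; your diagnosis is correct, and once that convention is fixed your proof and the paper's coincide.
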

\begin{proof}
    We first prove $\mathcal{D} \mathcal{I} = 0$. 
    \[
    \left(\prod_{i=1}^n (1-a_iq^{Q\partial _Q})\right)\mathcal{I} = \prod_{i=1}^n(1-a_i) + \sum_{d \geq 1}\frac{Q^d}{\prod_{i=1}^{n}(qa_i)_{d-1}} = Q\sum_{d \geq 1}\frac{Q^{d-1}}{\prod_{i=1}^{n}(qa_i)_{d-1}} = Q\mathcal{I},
    \]
    which gives us:
    \[
    \left(\prod_{i=1}^n (1-a_iq^{Q\partial _Q})-Q\right)\mathcal{I}=0. 
    \]
    Now we prove the second statement, namely $\mathcal{H}_y(\mathcal{D}) \mathcal{B}_y(\mathcal{I})=0$.
 First, by the definition of the balancing operator for classes, we obtain:
 \[
 \mathcal{B}_y(\mathcal{I}) = \sum_{d \geq 0}\frac{\prod_{i=1}^n(yqa_i)_d}{\prod_{i=1}^n(qa_i)_d}Q^d,
 \]
and by definition of balancing of operators we have:
\[
\mathcal{H}_y(\mathcal{D}) =\frac{\prod_{i=1}^n (1-a_iq^{Q\partial _Q})}{\prod_{i=1}^n(1-ya_iq^{Q\partial _Q})}-Q.
\]
The statement we want to prove is equivalent to:
\begin{equation}\label{Compatible balancing}
\left(\prod_{i=1}^n (1-a_iq^{Q\partial _Q})\right)\mathcal{B}_y(\mathcal{I}) = \left(\prod_{i=1}^n (1-ya_iq^{Q\partial})\right) Q\mathcal{B}_y(\mathcal{I})
\end{equation}
We first calculate the left hand side of \eqref{Compatible balancing}:
\[
\left(\prod_{i=1}^n (1-a_iq^{Q\partial _Q})\right)\mathcal{B}_y(\mathcal{I}) = \prod_{i=1}^n(1-a_i) + \sum_{d \geq 0}\frac{\prod_{i=1}^n(yqa_i)_d}{\prod_{i=1}^n(qa_i)_{d-1}}Q^d.
\]
The right hand side of \eqref{Compatible balancing} is:
\[
\left(\prod_{i=1}^n (1-ya_iq^{Q\partial})\right) Q\mathcal{B}_y(\mathcal{I}) = \sum_{d \geq 1}\frac{\prod_{i=1}^n(yqa_i)_d}{\prod_{i=1}^n(qa_i)_{d-1}}Q^d,
\]
and since $\prod_{i=1}^n(1-a_i) = 0$, we conclude the proof of the theorem.
\end{proof}
       \begin{example}\label{Balancing of operators for projective space}
           For projective spaces $G(1,n) \cong \mathbb{P}^{n-1}$, we encounter difference operators that annihilate the  $I$-functions $I(t;q,Q)$.  These operators take the form:
        \begin{equation} \label{Mojaz}
        \prod_{i=1}^{n}(1-\frac{Pq^{Q\partial_Q}}{t_i}) - Q.
        \end{equation}
        The $I$-function of $\mathbb{P}^{n-1}$ is also known:
        \[
 I(t;q,Q)=\sum_{d \geq 0}I_d(t;q)Q^d=\sum_{d \geq 0}\frac{Q^d}{(qPt_1^{-1})_d\cdot(qPt_2^{-1})_d}.
 \]
 We have the following:
        \[
        \left(\prod_{i=1}^{n}(1-\frac{Pq^{Q\partial_Q}}{t_i}) - Q\right)I(t;q,Q) = 0,
        \]
        where $P = \mathcal{O}(-1)$ is the tautological bundle on $\mathbb{P}^{n-1}$, and the ring $R$ defined in \cref{operator for operators} is $\mathbb{C}[P,t_1^{-1},\dots,t_n^{-1},Q]$.
Now, we can apply the operator $\mathcal{H}_{\hbar}$ \eqref{H-operator} to the operator \eqref{Mojaz}, and obtain:
        \[
        \mathcal{H}_{\hbar}\left(\prod_{i=1}^n(1-\frac{Pq^{Q\partial_Q}}{t_i})-Q\right) = \prod_{i=1}^n\frac{(1-\frac{Pq^{Q\partial_Q}}{t_i})}{(1-\hbar\frac{Pq^{Q\partial_Q}}{t_i})}-Q.
        \]
        As introduced in the previous section, these balanced $Q$-difference operators annihilate $\mathcal{B}_y\left(I(t;q,Q)\right)$:
        \[
        \left(\prod_{i=1}^{n}\frac{(1-\frac{Pq^{Q\partial_Q}}{t_i})}{(1-\hbar\frac{Pq^{Q\partial_Q}}{t_i})}-Q\right)\mathcal{B}_y\left(I(t;q,Q)\right)=0,
        \]
        or equivalently:
        \[
        \left(\prod_{i=1}^{n}(1-\frac{Pq^{Q\partial_Q}}{t_i}) \right)\mathcal{B}_y\left(I(t;q,Q)\right) = \left(\prod_{i=1}^{n}(1-\hbar\frac{Pq^{Q\partial_Q}}{t_i})\right)Q\mathcal{B}_y\left(I(t;q,Q)\right).
        \]
       \end{example}
        This example shows that the balancing of the $I$-functions is compatible with the balancing of the operators that annihilate them in the case of projective spaces.

Later, in \Cref{Main results}, after introducing the $J$-function and the quasimap vertex function, we show that each coefficient in the quasimap vertex function of $T^*G(r,n)$, at every fixed point, is the  $\lambda_y$-balanced $K$-theoretic class of the corresponding coefficient of the
$J$-function of $G(r,n)$, restricted to the same fixed point. This holds for a natural choice of $U$.

\subsection{Bethe-Ansatz Equations}\label{Bethe section}
In this section, we recall the Bethe-Ansatz equations for $T^*G(r,n)$, which arise in the study of integrable systems. We also discuss the abelian/non-abelian correspondence in quantum $K$-theory, as well as the operators that annihilate a special function, $\overline{J}^{tw}$, in the twisted quantum $K$-theory of the abelianized space of $G(r,n)$. We then construct new operators that annihilate $\mathcal{B}_y(\overline{J}^{tw})$. These operators recover the Bethe-Ansatz equations of $T^*G(r,n)$ after an appropriate specialization of parameters. 

The Grassmannian $G(r,n)$, viewed as a GIT quotient, is $Hom(\mathbb{C}^r ,\mathbb{C}^n)//_{\det} GL_r$. Its abelianization is defined to be the GIT quotient $Hom(\mathbb{C}^r , \mathbb{C}^n)//_{\det} {(\mathbb{C}^*)}^r$, with each factor of $\mathbb{C}^*$ scaling the columns of the $n \times r$ matrix. Therefore, the abelianization is isomorphic to $(\mathbb{P}^{n-1})^r$. In \cite{MR2367174}, it is proved that there exists a surjective map $\phi:(K(\mathbb{P}^{n-1}))^W \to K(G(r,n))$, where $W \cong S_r$ denotes the Weyl group of $GL_r$, acting by permutations on the factors. This map can be extended to the quantum $K$-rings $K(\mathbb{P}^{n-1})[[Q_1,\dots,Q_r]][\lambda]^W \to K(G(r,n))[[Q]]$, where, on $(\mathbb{P}^{n-1})^r$, we consider a twisted quantum $K$-theory. See \cite{huqkuruvilla2025relationstwistedquantumkrings}. By slight abuse of notation, we denote this map again by $\phi$. Under $\phi$, all the quantum parameters $Q_1,\dots,Q_r$ map to the quantum parameter $Q$, and $\lambda$ maps to 1. 

Givental and Yan in \cite{Givental:2020} calculate the following function in the twisted theory of $(\mathbb{P}^{n-1})^r$:
\begin{equation*}\label{twisted function}
\overline{J}^{tw}(q):=\sum_{d_1,\dots,d_n\geq 0}\frac{\prod_iQ_i^{d_i}}{\prod_{i=1}^n\prod_{j=1}^{n}\prod_{m=1}^{d_i}(1-q^m\frac{P_i}{t_j})}\prod_{i\neq j}\frac{\prod_{m=-\infty}^{d_i-d_j} (1-q^m\lambda\frac{P_i}{P_j})}{\prod_{m=-\infty}^{0} (1-q^m\lambda\frac{P_i}{P_j})},
\end{equation*}
where each $P_i$ is the tautological bundle of the $i$-th projective space.
This function maps to the $J$-function of $G(r,n)$ under $\phi$. Therefore, it can be used to find the relations in $QK^{tw}((\mathbb{P}^{n-1})^r)$; see \cite{huqkuruvilla2025relationstwistedquantumkrings}.

$\overline{J}^{tw}$ satisfies the following $q-$difference equations for $1 \leq i \leq r$:

$$\prod_{j\neq i} (1-\lambda qP_jP_i^{-1}q^{Q_j\partial_{Q_j}-Q_i\partial_{Q_i}})\prod_a(1-P_iq^{Q_i\partial_{Q_i}}/\Lambda_a)\overline{J}^{tw}=Q_i\prod_{j\neq i}(1-\lambda qP_iP_j^{-1}q^{Q_i\partial_{Q_i}-Q_j\partial_{Q_j}})\overline{J}^{tw}.$$
By extending the definition of the balancing (see \Cref{Appendix A}), we can define $\mathcal{J}$ to be the $\lambda_y$-balanced $K$-theoretic class of $\overline{J}^{tw}$, that is, $\mathcal{J} := \mathcal{B}_y(\overline{J}^{tw})$. Explicitly we have:

$$\mathcal{J}=\sum_{d_1,\dots,d_n\geq 0}\prod_{i=1}^rQ_i^{d_i}\frac{\prod_{i=1}^n\prod_{j=1}^{n}\prod_{m=1}^{d_i}(1+yq^m\frac{P_i}{t_j})}{\prod_{i=1}^n\prod_{j=1}^{n}\prod_{m=1}^{d_i}(1-q^m\frac{P_i}{t_j})}\prod_{i\neq j}\frac{\prod_{m=-\infty}^{d_i-d_j} (1-q^m\lambda\frac{P_i}{P_j})\prod_{m=-\infty}^{0} (1+yq^m\lambda\frac{P_i}{P_j})}{\prod_{m=-\infty}^{0} (1-q^m\lambda\frac{P_i}{P_j}) \prod_{m=-\infty}^{d_i-d_j} (1+yq^m\lambda\frac{P_i}{P_j})}.$$

For $\mathcal{J}$, we have the following $q$-difference equation for $1 \leq i \leq r$ (see \Cref{Appendix B} for the proof):
\begin{equation}\label{q-difference eq}
\mathcal{D}_1^i\mathcal{J} = \mathcal{D}_2^i\mathcal{J},
\end{equation}
where:
\[
\mathcal{D}_1^i = \prod_{j\neq i}(1+y\lambda P_iP_j^{-1}q^{Q_i\partial_{Q_i}-Q_j\partial_{Q_j}})\prod_{j\neq i} (1-\lambda qP_jP_i^{-1}q^{Q_j\partial_{Q_j}-Q_i\partial_{Q_i}})\prod_a(1-P_iq^{Q_i\partial_{Q_i}}/t_a),
\]
and
\[
\mathcal{D}_2^i = \prod_{j\neq i} (1+y\lambda qP_jP_i^{-1}q^{Q_j\partial_{Q_j}-Q_i\partial_{Q_i}})\prod_a(1-P_iq^{Q_i\partial_{Q_i}}/t_a)Q_i\prod_{j\neq i}(1-\lambda qP_iP_j^{-1}q^{Q_i\partial_{Q_i}-Q_j\partial_{Q_j}}).
\]
Consequently, the operators $f_i(y,\lambda,q,Q_1,\dots,Q_r,P_1q^{Q_1\partial_{Q_1}},\dots, P_rq^{Q_r\partial_{Q_r}})=\mathcal{D}_1^i - \mathcal{D}_2^i$ annihilate $\mathcal{J}$ for $1 \leq i \leq r$: 
\[(\mathcal{D}_1^i - \mathcal{D}_2^i) \mathcal{J} = 0. \]
Define $q$-difference equations $g_i$ via the change of variables $P_iq^{Q_i\partial_{Q_i}} \mapsto P_i $, $q \mapsto 1$, and $y \mapsto -\hbar$:
\[
g_i:=f_i(-\hbar,\lambda,1,Q_1,\dots,Q_r,P_1,\dots,P_r) = 0.
\]
We can rewrite $g_i=0$ as follows:
\begin{align}\label{IMT-type equation}
\prod_{j\neq i}(1-\hbar\lambda P_iP_j^{-1})\prod_{j\neq i} (1-\lambda P_jP_i^{-1})\prod_a(1-P_i/t_a) - {\prod_{j\neq i} (1-\hbar\lambda P_jP_i^{-1})\prod_a(1-P_i/t_a)Q_i\prod_{j\neq i}(1-\lambda P_iP_j^{-1})}=0.
\end{align} 
Now we recall the Bethe-Ansatz equations, which are fundamental in the study of integrable systems. The Bethe-Ansatz equations for $T^*G(r,n)$ are given by:
\begin{equation}\label{Bethe}
    \prod_{j=1}^n\frac{x_i/t_j-1}{kx_i/t_j-s}+(-1)^rQ\prod_{j\neq i}^r\frac{kx_j-sx_i}{kx_i-sx_j}=0,
    \qquad i=1,\dots,r
\end{equation}
where we should set $k=-y$ and $s=1$ in order to match with the geometric left Weyl group action. Furthermore, we also consider the change of variables $y = -\hbar$.

If we extend the map $\phi$ by setting $\phi(\hbar) = \hbar$, we obtain the following:

\begin{prop}
    Under the abelian/non-abelian map $\phi$ and after the change of variables $P_i \mapsto x_i$, equations \eqref{IMT-type equation} map to the Bethe-Ansatz equations on $T^*G(r,n)$.
\end{prop}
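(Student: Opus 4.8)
The plan is to start from the balanced difference equations \eqref{IMT-type equation} and show that, after the stated substitutions, each $g_i=0$ becomes exactly one of the Bethe-Ansatz equations \eqref{Bethe}. Since the proposition concerns the image under $\phi$ combined with the change of variables $P_i \mapsto x_i$, I would first record precisely what $\phi$ does: the tautological bundles $P_i$ become the Bethe roots $x_i$, the equivariant parameters $t_a$ are carried along, $\hbar \mapsto \hbar$ by the extension hypothesis, and the quantum parameters $Q_i$ all map to the single $Q$. The key observation is that equation \eqref{IMT-type equation} is already written purely in terms of the commuting symbols $P_i, P_j, t_a, \lambda, \hbar, Q_i$ (the $q$-difference parts having been collapsed by the substitution $q\mapsto 1$, $P_i q^{Q_i\partial_{Q_i}}\mapsto P_i$), so no operator manipulation remains—only an algebraic identity between rational functions needs to be verified.

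The core of the argument is a direct comparison of the two rational expressions. First I would rewrite \eqref{IMT-type equation} by dividing through by the common factor $\prod_{j\neq i}(1-\hbar\lambda P_i P_j^{-1})$ (nonzero as a rational function), which, after applying $P_i\mapsto x_i$, $\lambda = 1/(-y)\cdot(\dots)$—or more directly using the prescribed $k=-y$, $s=1$ and $y=-\hbar$—turns the first term into a product of the form $\prod_a (1 - x_i/t_a)\cdot\prod_{j\neq i}(1-\lambda x_j x_i^{-1})$ and the second into a $Q_i$-weighted product $\prod_{j\neq i}(1-\lambda x_i x_j^{-1})$. Dividing the resulting equation by the first term (equivalently clearing denominators so that each factor $(1-x_i/t_a)$ becomes $(x_i/t_j - 1)$ up to sign and rescaling by $t_j$) produces the ratio $\prod_j \frac{x_i/t_j - 1}{k x_i/t_j - s}$ for the matter factor and $\prod_{j\neq i}\frac{k x_j - s x_i}{k x_i - s x_j}$ for the gauge factor. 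The main bookkeeping is tracking the overall sign $(-1)^r$ and the roles of $k=-y=-\hbar$ and $s=1$: the factors $(1-\lambda x_i x_j^{-1})$ and $(1-\lambda x_j x_i^{-1})$, after multiplying numerator and denominator by $x_i x_j$ and substituting $\lambda$ in terms of $k,s$, should produce exactly $k x_i - s x_j$ and $k x_j - s x_i$ up to the sign collected from the $r-1$ factors indexed by $j\neq i$.

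Concretely, the steps in order are: (i) apply $\phi$ and $P_i\mapsto x_i$, $\hbar\mapsto\hbar$, $Q_i\mapsto Q$ to \eqref{IMT-type equation}; (ii) impose the dictionary $k=-y$, $s=1$, $y=-\hbar$, so that $\lambda$ and $\hbar$ are re-expressed through $k,s$ as dictated by the geometric Weyl-group normalization; (iii) divide the equation by the matter product $\prod_a(1-x_i/t_a)$ and by $\prod_{j\neq i}(1-\lambda x_j x_i^{-1})$ to isolate the ratio form; (iv) multiply each factor $(1-x_i/t_a)$ and $(1 - k x_i/t_a \cdot s^{-1})$ by $t_a$, and each gauge factor by the appropriate $x_i x_j$, to match the numerators and denominators in \eqref{Bethe}; (v) collect the total sign $(-1)^r$ from the $r-1$ gauge factors together with the sign from the matter normalization and confirm it agrees with the $(-1)^r$ in \eqref{Bethe}. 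The main obstacle I anticipate is precisely step (v): getting the sign conventions and the exact form of $\lambda$ in terms of $k,s$ consistent so that the half-integer/$q^{1/2}$-type normalizations implicit in the geometric left Weyl-group action land on $(-1)^r$ rather than some other sign, and verifying that the parameter $s$ genuinely specializes to $1$ without leaving a residual scaling. Once the dictionary is pinned down, the remaining verification is a routine matching of factors.
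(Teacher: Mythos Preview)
Your outline has the right shape---it is indeed a direct algebraic rewriting---but it contains a concrete misunderstanding of the map $\phi$ that derails the computation. The abelian/non-abelian map $\phi$ sends $\lambda \mapsto 1$ (this was stated just before equation~\eqref{twisted function}). You omit this specialization in step (i), and then in step (ii) you try to ``re-express $\lambda$ and $\hbar$ through $k,s$ as dictated by the geometric Weyl-group normalization.'' There is no such dictionary for $\lambda$: it simply disappears. The parameters $k$ and $s$ in the Bethe equations are matched to $\hbar$ alone via $k=-y=\hbar$, $s=1$; $\lambda$ plays no role after $\phi$. This is why your anticipated ``main obstacle'' of pinning down $\lambda$ in terms of $k,s$ is a phantom problem.

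There is a second, smaller error: you propose to divide through by the ``common factor'' $\prod_{j\neq i}(1-\hbar\lambda P_iP_j^{-1})$, but inspect \eqref{IMT-type equation} again---the second term carries $\prod_{j\neq i}(1-\hbar\lambda P_jP_i^{-1})$ with the indices swapped, so this is not a common factor. The paper instead (after $\lambda\to 1$, $Q_i\to Q$) divides the first term by the second to form a single ratio, and then uses the elementary identity
\[
\frac{1-\tfrac{P_j}{P_i}}{1-\tfrac{P_i}{P_j}}=-\frac{P_j}{P_i}
\]
applied to the $r-1$ pairs $j\neq i$. This produces exactly the sign $(-1)^{r-1}$ together with a factor $\prod_{j\neq i}P_j/P_i$ that is absorbed in the subsequent simplification of the gauge ratio $\prod_{j\neq i}\frac{1-\hbar P_i/P_j}{1-\hbar P_j/P_i}$. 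Once you set $\lambda=1$ and use this identity, the matching to \eqref{Bethe} with $k=\hbar$, $s=1$ is immediate and the sign $(-1)^r$ falls out without any delicate bookkeeping.
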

\begin{proof}
    In order to find the image of \Cref{IMT-type equation} under $\phi$, one needs to specialize $\lambda \mapsto 1$ and $Q_i \to Q$ for $1 \leq i \leq r$:
    \[
\prod_{j \ne i}(1- \hbar  \frac{P_i}{P_j})\prod_{j \ne i}(1-\frac{P_j}{P_i})\prod_{a}(1-\frac{P_i}{t_a}) - Q \prod_{j \ne i}(1-\hbar  \frac{P_j}{P_i})\prod_{a}(1-\hbar \frac{P_i}{t_a})\prod_{j \ne i}(1-\frac{P_i}{P_j})=0.
\]
Rearranging terms, this expression becomes:
\[
\frac{\prod_{j \ne i}(1- \hbar  \frac{P_i}{P_j})\prod_{j \ne i}(1-\frac{P_j}{P_i})\prod_{a}(1-\frac{P_i}{t_a})}{\prod_{j \ne i}(1-\hbar  \frac{P_j}{P_i})\prod_{a}(1-\hbar \frac{P_i}{t_a})\prod_{j \ne i}(1-\frac{P_i}{P_j})}-Q=0.
\]
 Note that since $\frac{1-\frac{P_j}{P_i}}{1-\frac{P_i}{P_j}} = -\frac{P_j}{P_i}$, we obtain:

 \[
 (-1)^{r-1} \prod_{j \ne i} \frac{P_j}{P_i}\cdot\frac{\prod_{a}(1-\frac{P_i}{t_a})\prod_{j \ne i}(1- \hbar  \frac{P_i}{P_j})}{\prod_{a}(1-\hbar \frac{P_i}{t_a})\prod_{j \ne i}(1-\hbar  \frac{P_j}{P_i})}-Q=0,
 \]
 which equals:
 \[
  \prod_{j \ne i} \frac{P_j}{P_i}\cdot\frac{\prod_{a}(1-\frac{P_i}{t_a})\prod_{j \ne i}(1- \hbar  \frac{P_i}{P_j})}{\prod_{a}(1-\hbar \frac{P_i}{t_a})\prod_{j \ne i}(1-\hbar  \frac{P_j}{P_i})}+ (-1)^rQ=0,
 \]
 which simplifies to: 
 \[
 \prod_a \frac{1-\frac{P_i}{t_a}}{1-\hbar\frac{P_i}{t_a}}+(-1)^r Q\prod_{j \ne i}^r \frac{\hbar P_j - P_i}{\hbar P_i - P_j}. 
 \]
 Finally, after the change of variables $P_i \mapsto x_i$, we recover the Bethe-Ansatz equations \eqref{Bethe}. This completes the proof.
\end{proof}
 
\section{Quot scheme}\label{sec:Quot scheme} In this section, we recall the definition of the
Grothendieck Quot scheme, following \cite{MR908717}. 

Let $f:\mathbb{P}^1 \to G(r,n)$ be a rational curve of degree $d$ in the Grassmannian. 
By the universal property of $G(r,n)$, such a map $f$ corresponds to an
isomorphism class of short exact sequences of locally free
sheaves:
\[
0 \to \mathcal{K} \to \mathcal{O}^n_{\mathbb{P}^1} \to \mathcal{Q} \to 0,
\]
where $\operatorname{rank}(\mathcal{K}) = r$,
 and $\deg(\mathcal{K})=-d$. Since $\mathcal{K}$ is a locally free sheaf on
$\mathbb{P}^1$, it follows that:
\[
\mathcal{K} \simeq \bigoplus_{i=1}^r \mathcal{O}_{\mathbb{P}^1}(-d_i)
\]
with $d_i \ge 0$ and $\sum_{i=1}^r d_i = d$.

The quotients $\mathcal{Q}$ fitting into short exact sequences of the form 
\[
0 \to \bigoplus_{i=1}^r \mathcal{O}_{\mathbb{P}^1}(-d_i) \to \mathcal{O}^n_{\mathbb{P}^1} \to \mathcal{Q} \to 0 
\]
may degenerate in flat families to quotients $\mathcal{Q}$ that are no longer locally free. Consequently, the moduli space of rational curves in $G(r,n)$ is non-compact.

The Grothendieck Quot scheme, denoted by $Quot_{\mathbb{P}^1,d}(\mathbb{C}^n,r)$, is a variety that compactifies this space. It parametrizes coherent quotient sheaves of the form: 
\[
\mathcal{O}_{\mathbb{P}^1}^n \to \mathcal{Q} \to 0,
\]
with Hilbert polynomial $p(t) = (n-r)(t+1)+d$. Its
closed points correspond to isomorphism classes of short exact sequences of the form:
\[
0 \to \bigoplus_{i=1}^r \mathcal{O}_{\mathbb{P}^1}(-d_i) \to \mathcal{O}^n_{\mathbb{P}^1} \to \mathcal{Q} \to 0
\]
where $\mathcal{Q}$ is a coherent sheaf with Hilbert polynomial $p(t) = (n-r)(t+1)+d$, and the integers $d_i \geq 0$ satisfy $\sum_{i=1}^r d_i = d$.

The Quot scheme may also be defined more precisely as a functor of points; we refer the reader to \cite{GRo} for this perspective. In this paper, however, we follow the approach of Strømme \cite{MR908717}, who also proved several foundational results on Quot schemes over Grassmannians. The key property of the Quot scheme utilized in this work is that it is a smooth projective variety. In the next section, we recall the torus action defined by Strømme on the Quot scheme and show that the set of torus-fixed points is finite. See \cite[Theorem 2.1]{MR908717}.

\subsection{Torus Action}\label{Torussection} In this section, we recall the torus actions on the Quot scheme, as described in \cite{Bertram:2003qd, MR908717}, which we will use later to define a $\lambda_y$-balanced K-theoretic class on the Grassmannian $G(r,n)$.

We begin by recalling a $\mathbb{C}^*$-action on the Quot scheme, and the description of $\mathbb{C}^*$-fixed points, as described in \cite{Bertram:2003qd}. 
Recall that points in the Quot scheme can be identified with rank $r$, locally free subsheaves $\mathcal{K}$ of $\mathbb{C}^n \otimes \mathcal{O}_{\mathbb{P}^1}$ of degree $-d$.
There is a natural $\mathbb{C}^*$-action on $\mathbb{P}^1$ given by
\begin{align} \label{action}
    q\cdot[x:y] \longmapsto [q x:y],
\end{align}
which we denote by $\mathbb{C}_q^*$. This action lifts naturally to the trivial bundle $\mathbb{C}^n \otimes \mathcal{O}_{\mathbb{P}^1}$, and thus induces an action on $Quot_{\mathbb{P}^1,d}(\mathbb{C}^n,r)$ by pulling back kernels.

To study the $\mathbb{C}_q^*$-fixed points of $Quot_{\mathbb{P}^1,d}(\mathbb{C}^n,r)$, one needs to study $Quot_{\mathbb{P}^1,d}(\mathbb{C}^r,r)$, the moduli space parameterizing degree $d$ torsion quotient sheaves of 
$\mathbb{C}^r \otimes \mathcal{O}_{\mathbb{P}^1}$. Taking the top exterior power defines a natural map:
\[
\wedge^r : Quot_{\mathbb{P}^1,d}(\mathbb{C}^r,r) \to Quot_{\mathbb{P}^1,d}(\mathbb{C},r)
\]
given by
\[
\mathcal{K} \subset \mathbb{C}^r \otimes \mathcal{O}_{\mathbb{P}^1} \mapsto \wedge^r \mathcal{K} \subset \wedge^r (\mathbb{C}^r \otimes \mathcal{O}_{\mathbb{P}^1}) \cong \mathbb{C} \otimes \mathcal{O}_{\mathbb{P}^1}
\]

As a locally free sheaf on $\mathbb{P}^1$, $\mathcal{K}$ splits as a direct sum of line bundles:
\[
\mathcal{O}_{\mathbb{P}^1}(-d_1) \oplus \dots \oplus \mathcal{O}_{\mathbb{P}^1}(-d_r), 
\]
where $d_i \ge 0$ and $d_1+\dots+d_r=d$. This splitting is unique if we require $0 \le d_1 \le \dots \le d_r$.

 We now state the following lemma from \cite[Lemma 1.1]{Bertram:2003qd}:
\begin{lemma}\label{lemma 3.1}
    For each splitting type $\{d_i\}$ as above, let $m_1 <m_2 <\dots<m_k$ denote the jumping indices (i.e., $0 \leq d_1=\dots=d_{m_1} < d_{m_1 +1}=\dots=d_{m_2}<\dots$). Then there is an embedding of the flag manifold:
    \[
    i_{\{d_i\}}:\mathrm{Fl}(m_1,m_2,\dots,m_k;r) \hookrightarrow Quot_{\mathbb{P}^1,d}(\mathbb{C}^r,r)
    \]
    with the property that each fixed point of the $\mathbb{C}_q^*$-action with $\mathrm{supp}(\mathbb{C}^n \otimes \mathcal{O}_{\mathbb{P}^1})/\mathcal{K})=\{0\}$ and with kernel splitting type $\{d_i\}$ corresponds to a point of the image of $i_{\{d_i\}}$.        
\end{lemma}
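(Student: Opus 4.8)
The plan is to reduce the classification of these fixed points to linear algebra over the weight-graded ring $\mathbb{C}[t]$, and then to recognize the resulting parameter set as a flag manifold.

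First I would work in the affine chart $U_0 = \mathrm{Spec}\,\mathbb{C}[t]$ around $0$, in which the $\mathbb{C}_q^*$-action \eqref{action} rescales the coordinate $t$, so that $t$ is a weight vector while $\mathbb{C}^r$ sits in weight zero. Because the cokernel is required to be supported at $\{0\}$, the subsheaf $\mathcal{K}$ agrees with $\mathbb{C}^r\otimes\mathcal{O}_{\mathbb{P}^1}$ over $\mathbb{P}^1\setminus\{0\}$, so the point is completely recorded by the full-rank free submodule $M=\mathcal{K}|_{U_0}\subset\mathbb{C}[t]^r$ with $\dim_{\mathbb{C}}(\mathbb{C}[t]^r/M)=d$. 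Such a point is $\mathbb{C}_q^*$-fixed precisely when $M$ is homogeneous for the grading with $\deg t=1$. Writing $\mathbb{C}[t]^r=\bigoplus_{j\geq 0}\mathbb{C}^r t^j$ for the weight decomposition and $M_j:=(M\cap\mathbb{C}^r t^j)t^{-j}\subseteq\mathbb{C}^r$, the submodule condition $t\cdot M\subseteq M$ becomes the chain of inclusions $M_0\subseteq M_1\subseteq M_2\subseteq\cdots$, which must stabilize at $\mathbb{C}^r$ since the quotient has finite length; conversely every such chain defines a homogeneous $M$, with $\dim_{\mathbb{C}}(\mathbb{C}[t]^r/M)=\sum_{j\geq 0}(r-\dim M_j)$.

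Next I would extract the splitting type of $\mathcal{K}$ from the chain $\{M_j\}$. Picking a basis $e_1,\dots,e_r$ of $\mathbb{C}^r$ adapted to the flag $\{M_j\}$ diagonalizes the module as $M=\bigoplus_i t^{a_i}\mathbb{C}[t]\,e_i$, where $a_i$ is the smallest $j$ with $e_i\in M_j$; gluing each summand to the trivial bundle over $\mathbb{P}^1\setminus\{0\}$ identifies it with $\mathcal{O}_{\mathbb{P}^1}(-a_i)$, so the splitting type is $\{d_i\}=\{a_i\}$ and $\#\{i:d_i\leq j\}=\dim M_j$. Hence prescribing the multiset $\{d_i\}$ is the same as prescribing the dimensions $\dim M_j$: if $v_1<\cdots<v_k$ are the distinct values of the $d_i$ and $m_l:=\#\{i:d_i\leq v_l\}$ are the jumping indices, then the distinct terms of the chain form an honest flag $M_{v_1}\subsetneq\cdots\subsetneq M_{v_k}=\mathbb{C}^r$ with $\dim M_{v_l}=m_l$. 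This gives a bijection between $\mathbb{C}_q^*$-fixed points supported at $0$ with splitting type $\{d_i\}$ and points of $\mathrm{Fl}(m_1,\dots,m_k;r)$.

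Finally I would promote this bijection to a closed embedding $i_{\{d_i\}}$. Over $\mathrm{Fl}(m_1,\dots,m_k;r)$ one has the tautological flag of subbundles $\mathcal{W}_1\subset\cdots\subset\mathcal{W}_k=\mathbb{C}^r\otimes\mathcal{O}$, and I would build from it a family of homogeneous subsheaves of $\mathbb{C}^r\otimes\mathcal{O}_{\mathbb{P}^1}$ by declaring the weight-$j$ piece to be $\mathcal{W}_l$ for $v_l\leq j<v_{l+1}$ (and $0$, resp.\ everything, below $v_1$, resp.\ above $v_k$), with torsion cokernel of constant length $d$. The universal property of the Quot scheme then yields the morphism $i_{\{d_i\}}$, whose image lies in the $\mathbb{C}_q^*$-fixed locus by construction. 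The hard part is exactly this last step: proving the constructed family is flat over the flag variety and that $i_{\{d_i\}}$ is a closed immersion rather than a mere bijection on points. I expect to settle it by a tangent-space computation, comparing $T\,\mathrm{Fl}(m_1,\dots,m_k;r)$ with the $\mathbb{C}_q^*$-invariant part of the Quot tangent space $\mathrm{Hom}_{\mathbb{P}^1}(\mathcal{K},\mathbb{C}^r\otimes\mathcal{O}_{\mathbb{P}^1}/\mathcal{K})$ at a fixed point, and checking that the homogeneous first-order deformations of $M$ are precisely those of the flag $\{M_j\}$.
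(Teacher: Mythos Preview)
The paper does not give its own proof of this lemma; it is quoted directly from \cite[Lemma~1.1]{Bertram:2003qd} and used as input. So there is no in-paper argument to compare against. That said, your plan is the standard and correct one, and it matches in outline what Bertram does: restrict to the affine chart at $0$, identify $\mathbb{C}_q^*$-fixed subsheaves supported at $0$ with graded full-rank $\mathbb{C}[t]$-submodules $M\subset\mathbb{C}[t]^r$, read off the flag $\{M_j\}$ from the weight filtration, recover the splitting type from the jump pattern of $\dim M_j$, and then globalize over $\mathrm{Fl}(m_1,\dots,m_k;r)$ using the tautological flag and the universal property of the Quot scheme.

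Your diagnosis of the remaining technical point is accurate. Flatness of the constructed family is immediate because the quotient has constant length $d$ over the flag variety. For the closed-immersion step, the tangent-space check you propose is exactly the right tool: the $\mathbb{C}_q^*$-invariant part of $\mathrm{Hom}_{\mathbb{P}^1}(\mathcal{K},\mathbb{C}^r\otimes\mathcal{O}_{\mathbb{P}^1}/\mathcal{K})$ at a fixed point decomposes along the weight grading into $\bigoplus_j \mathrm{Hom}(M_j,\mathbb{C}^r/M_j)$ modulo the obvious relations coming from $t$-multiplication, which is precisely the tangent space of the flag variety. Since $i_{\{d_i\}}$ is a proper injective morphism between smooth varieties inducing isomorphisms on tangent spaces, it is a closed immersion (indeed, an isomorphism onto its image).
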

 If a scheme $X$ is equipped with a vector bundle $E$ of rank $r$, then there exists a relative Quot scheme over $X$, 
\[
Quot_{\mathbb{P}^1,d}(E,r) \to X,
\]
which represents the functor assigning to each $X$-scheme $T$ the set of   flat, relative length $d$ quotients $\pi^*E \twoheadrightarrow \mathcal{Q}$ over $T$ for $\pi: \mathbb{P}^1 \times T \to T \to X$. Grothendieck showed that the fibers of this relative Quot scheme are isomorphic to $Quot_{\mathbb{P}^1,d}(\mathbb{C}^r,r)$. Moreover,  the $\mathbb{C}_q^*$-action globalizes, and we obtain a morphism of $X$ schemes:
\[
i_{\{d_i\}}:\mathrm{Fl}(m_1,\dots,m_k;E) \to Quot_{\mathbb{P}^1,d}(E,r).
\]
By applying the above construction to the universal sub-bundle $S$ on $G(r,n)$ we obtain the following lemma   (\cite[Lemma 1.2]{Bertram:2003qd}):

\begin{lemma}\label{Relative fixed-points}
    There is a natural $\mathbb{C}_q^*$-equivariant embedding
    \[
    j:Quot_{\mathbb{P}^1,d}(S,r) \hookrightarrow Quot_{\mathbb{P}^1,d}(\mathbb{C}^n,r) 
    \]
    such that all the $\mathbb{C}_q^*$-fixed points of $Quot_d(\mathbb{C}^n,r)$ that satisfy $\mathrm{supp}(\mathrm{tor}(\mathbb{C}^n \otimes \mathcal{O}_{\mathbb{P}^1})/\mathcal{K}))=\{0\}$ are precisely the images of flag manifolds
    \[
    i_{\{d_i\}}: \mathrm{Fl}(m_1 , m_2, \dots , m_k,r:n)=\mathrm{Fl}(m_1 , m_2 , \dots , m_k;S) \to Quot_{\mathbb{P}^1,d}(S,r)
    \]
    embedded by the relative version of \Cref{lemma 3.1}. In other words:
    \[
    Quot_{\mathbb{P}^1,d}(\mathbb{C}^n,r)^{\mathbb{C}^*}_0 = \sqcup_{\{d_i\}}i_{\{d_i\}} (\mathrm{Fl}(m_1,\dots,m_k;S)),
    \]
    where $Quot_{\mathbb{P}^1,d}(\mathbb{C}^n,r)^{\mathbb{C}_q^*}_0$ are $\mathbb{C}_q^*$-fixed points supported at 0.
\end{lemma}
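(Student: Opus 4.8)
The plan is to deduce the statement from the absolute description of \Cref{lemma 3.1} by first constructing the embedding $j$ and then recognizing every fixed point supported at $0$ as lying in its image, fibered over $G(r,n)$. The embedding $j$ is defined by composing a relative subsheaf $\mathcal{K}\subset S_{[V]}\otimes\mathcal{O}_{\mathbb{P}^1}$, over a point $[V]\in G(r,n)$, with the tautological inclusion $S\hookrightarrow \mathbb{C}^n\otimes\mathcal{O}_{G(r,n)}$; concretely a relative quotient $S_{[V]}\otimes\mathcal{O}_{\mathbb{P}^1}\twoheadrightarrow\mathcal{Q}'$ with kernel $\mathcal{K}$ is sent to the quotient $\mathbb{C}^n\otimes\mathcal{O}_{\mathbb{P}^1}\twoheadrightarrow (\mathbb{C}^n\otimes\mathcal{O}_{\mathbb{P}^1})/\mathcal{K}$. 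A kernel of rank $r$ and degree $-d$ produces a quotient of rank $n-r$ and degree $d$, so the Hilbert polynomial $(n-r)(t+1)+d$ is preserved and $j$ lands in $Quot_{\mathbb{P}^1,d}(\mathbb{C}^n,r)$. Since $\mathbb{C}^*_q$ acts trivially on $G(r,n)$ and through \eqref{action} on $\mathbb{P}^1$, this construction is $\mathbb{C}^*_q$-equivariant; because the subsheaf $\mathcal{K}\subset\mathbb{C}^n\otimes\mathcal{O}_{\mathbb{P}^1}$ together with its generic span recovers both $[V]$ and the original datum, $j$ is injective, and that it is a closed immersion follows from the functorial description of the relative Quot scheme.

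The heart of the argument is to show that a $\mathbb{C}^*_q$-fixed point of $Quot_{\mathbb{P}^1,d}(\mathbb{C}^n,r)$ whose quotient has torsion supported only at $0$ lies in the image of $j$. Let $\mathcal{K}\subset\mathbb{C}^n\otimes\mathcal{O}_{\mathbb{P}^1}$ be the corresponding rank $r$ locally free kernel and let $\tilde{\mathcal{K}}$ be its saturation, so that $\tilde{\mathcal{K}}$ is a subbundle and $\tilde{\mathcal{K}}/\mathcal{K}$ is supported precisely where the quotient has torsion, i.e. at $0$. Saturation is canonical, hence $\tilde{\mathcal{K}}$ is again $\mathbb{C}^*_q$-fixed. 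A rank $r$ subbundle of the trivial bundle is the same as a morphism $\mathbb{P}^1\to G(r,n)$ pulling back the universal subbundle, and $\mathbb{C}^*_q$-invariance of $\tilde{\mathcal{K}}$ forces this morphism to be $\mathbb{C}^*_q$-equivariant for the trivial action on $G(r,n)$, hence constant. Thus $\tilde{\mathcal{K}}=V\otimes\mathcal{O}_{\mathbb{P}^1}$ for a fixed $V$ with $[V]\in G(r,n)$, and in particular $\mathcal{K}\subset V\otimes\mathcal{O}_{\mathbb{P}^1}=S_{[V]}\otimes\mathcal{O}_{\mathbb{P}^1}$. This exhibits the fixed point as $j$ of a point of the fiber of $Quot_{\mathbb{P}^1,d}(S,r)$ over $[V]$, namely a degree $d$ torsion quotient of $\mathbb{C}^r\otimes\mathcal{O}_{\mathbb{P}^1}$ supported at $0$; the length of $\tilde{\mathcal{K}}/\mathcal{K}$ is exactly $d$ since $\deg\tilde{\mathcal{K}}=0$ while $\deg\mathcal{K}=-d$.

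It remains to describe these fibers and globalize. In each fiber over $[V]$ the fixed points supported at $0$ are governed by \Cref{lemma 3.1}: they are the images of the flag manifolds $i_{\{d_i\}}:\mathrm{Fl}(m_1,\dots,m_k;r)\hookrightarrow Quot_{\mathbb{P}^1,d}(\mathbb{C}^r,r)$, one for each splitting type $\{d_i\}$. Applying this fiberwise to $S$, using the relative embedding $i_{\{d_i\}}:\mathrm{Fl}(m_1,\dots,m_k;S)\to Quot_{\mathbb{P}^1,d}(S,r)$ already available from the relative construction, and composing with $j$ yields the asserted decomposition $Quot_{\mathbb{P}^1,d}(\mathbb{C}^n,r)^{\mathbb{C}^*_q}_0=\sqcup_{\{d_i\}}i_{\{d_i\}}(\mathrm{Fl}(m_1,\dots,m_k;S))$. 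I expect the main obstacle to be the passage from this pointwise/fiberwise analysis to an honest decomposition of the fixed \emph{scheme}: one must verify that the assignment sending a fixed point to its constant subbundle $V$ is a morphism to $G(r,n)$, so that the fixed locus fibers algebraically over $G(r,n)$ and the saturation $\tilde{\mathcal{K}}$ and torsion $\tilde{\mathcal{K}}/\mathcal{K}$ vary in flat families. This is where the relative Quot scheme and the flatness properties established by Grothendieck and Str\o mme are essential, and it is the only point where more than the fiberwise content of \Cref{lemma 3.1} is required.
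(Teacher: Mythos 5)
Your argument is sound, but note that the paper itself offers no proof of this lemma: it is quoted verbatim as \cite[Lemma 1.2]{Bertram:2003qd}, and the only thing the paper adds is the description of $j$ on closed points as the composition $\mathcal{K}\hookrightarrow \mathcal{S}_x\otimes\mathcal{O}_{\mathbb{P}^1}\to\mathbb{C}^n\otimes\mathcal{O}_{\mathbb{P}^1}$ --- which agrees exactly with your definition of $j$. So there is nothing in the paper to compare your reduction step against; what you have written is a correct reconstruction of the standard argument from the cited source. The key points all check out: the saturation $\tilde{\mathcal{K}}$ is canonical, hence $\mathbb{C}^*_q$-invariant; an invariant rank-$r$ subbundle of the trivial bundle classifies a map $\mathbb{P}^1\to G(r,n)$ that is constant on the dense orbit $\mathbb{P}^1\setminus\{0,\infty\}$ and hence constant, giving $\tilde{\mathcal{K}}=V\otimes\mathcal{O}_{\mathbb{P}^1}$ and forcing $\operatorname{length}(\tilde{\mathcal{K}}/\mathcal{K})=d$; and since $\tilde{\mathcal{K}}/\mathcal{K}\cong\mathrm{tor}(\mathcal{Q})$ is supported at $0$ by hypothesis, the point lands in the fiber of $Quot_{\mathbb{P}^1,d}(S,r)$ over $[V]$ where \Cref{lemma 3.1} applies. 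Two minor remarks. First, for injectivity of $j$ you should say explicitly that for \emph{any} point in the image (fixed or not), the saturation of $\mathcal{K}$ inside $\mathbb{C}^n\otimes\mathcal{O}_{\mathbb{P}^1}$ equals $V\otimes\mathcal{O}_{\mathbb{P}^1}$, because $(V\otimes\mathcal{O}_{\mathbb{P}^1})/\mathcal{K}$ is torsion while $\mathbb{C}^n\otimes\mathcal{O}_{\mathbb{P}^1}/(V\otimes\mathcal{O}_{\mathbb{P}^1})$ is locally free; this is what lets you recover $[V]$. Second, you correctly flag that the pointwise identification must be upgraded to an identification of the fixed subscheme; since $Quot_{\mathbb{P}^1,d}(\mathbb{C}^n,r)$ is smooth its $\mathbb{C}^*_q$-fixed locus is smooth, and both sides are smooth closed subschemes with the same closed points, which closes that gap and is consistent with how Strømme's results are used elsewhere in the paper.
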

Note that on closed points the embedding $j$ is given by:
\[
(\mathcal{K} \hookrightarrow \mathcal{S}_x \otimes \mathcal{O}_{\mathbb{P}^1}) \mapsto (\mathcal{K} \hookrightarrow \mathbb{C}^n \otimes \mathcal{O}_{\mathbb{P}^1})
\]
where $x \in G(r,n)$, $\mathcal{S}_x$ denotes the fiber of $\mathcal{S}$ over $x$, and $\mathcal{K} \hookrightarrow \mathbb{C}^n \otimes \mathcal{O}_{\mathbb{P}^1}$ is a composition 
$\mathcal{K} \hookrightarrow \mathcal{S}_x \otimes \mathcal{O}_{\mathbb{P}^1} \to \mathbb{C}^n \otimes \mathcal{O}_{\mathbb{P}^1} $. Here, the last morphism is induced by the inclusion map $ \mathcal{S}_x\hookrightarrow \mathbb{C}^n$ on $G(r,n)$.

We now recall from \cite{MR908717} the $T \times \mathbb{C}_q^*$-action on the Quot scheme, where $T \subset GL_n(\mathbb{C})$ denotes the maximal torus of diagonal matrices. 
First, observe that the torus $T$ acts on $\mathbb{C}^n$ by scaling each coordinate: 
\begin{equation} \label{Torus action}
\mathrm{diag}(t_1,\dots,t_n)\cdot(x_1,\dots,x_n) = (t_1x_1,\dots,t_nx_n).
\end{equation}
This induces a natural $T$-action on the Quot scheme $Quot_{\mathbb{P}^1,d}(\mathbb{C}^n,r)$ as follows. Consider a point of the Quot scheme represented by the short exact sequence:
\[
0 \to \mathcal{K} \to \mathbb{C}^n \otimes \mathcal{O}_{\mathbb{P}^1}. 
\]

The above action of $T$ on $\mathbb{C}^n$ induces an automorphism of the trivial bundle  $\mathbb{C}^n \otimes \mathcal{O}_{\mathbb{P}^1}$. 
The action of $t \in T$ on the point defined by the above sequence is given by applying this automorphism to the subbundle $\mathcal{K}$:
\[
t\cdot(0 \to \mathcal{K} \to \mathbb{C}^n \otimes \mathcal{O}_{\mathbb{P}^1}) := 0 \to t(\mathcal{K}) \to t(\mathbb{C}^n \otimes \mathcal{O}_{\mathbb{P}^1}) \cong \mathbb{C}^n \otimes \mathcal{O}_{\mathbb{P}^1}.
\]

We now analyze the fixed points of the $T \times \mathbb{C}_q^*$ action. We begin by describing the fixed points under the action of $T$. Since $T$ is a torus, the $T$-module $\mathbb{C}^n$ decomposes into one-dimensional weight spaces:
\[
\mathbb{C}^n = \mathbb{C}_{t_1} \oplus \dots \oplus \mathbb{C}_{t_n},
\]
 where each $\mathbb{C}_{t_i}$ is a one-dimensional $T$-module corresponding to the character $t_i :T \to \mathbb{C}^*$, defined by $t \mapsto t_i$.

A point
\[
0 \to \mathcal{K} \to \mathbb{C}^n \otimes \mathcal{O}_{\mathbb{P}^1}
\]
 is fixed under the $T$-action if and only if $\mathcal{K}$ decomposes as $\mathcal{K} = \bigoplus_{i=1}^r \mathcal{K}_i$, where: 
 \[
 \mathcal{K}_i = \mathcal{K} \cap (\mathbb{C}_{t_i} \otimes \mathcal{O}_{\mathbb{P}^1}).
 \]
  Since $\mathbb{C}_{t_i} \otimes \mathcal{O}_{\mathbb{P}^1}$ is a trivial line bundle, and  $\mathcal{K}_i$ is a subsheaf of it, $\mathcal{K}_i$ must be isomorphic to $\mathbb{C}_{t_i} \otimes \mathcal{O}_{\mathbb{P}^1}(-d_i)$ for some $d_i \geq 0$. The inclusion
\[
\mathbb{C}_{t_i} \otimes \mathcal{O}_{\mathbb{P}^1}(-d_i) \to \mathbb{C}_{t_i} \otimes \mathcal{O}_{\mathbb{P}^1}
\]
is then  given by multiplication by a section $f_i \in H^0(\mathbb{P}^1,\mathcal{O}_{\mathbb{P}^1}(d_i))$, i.e., a homogeneous polynomial of degree $d_i$. 

 A $T$-fixed point
 \[
 0 \to \mathcal{K} \to \mathbb{C}^n \otimes \mathcal{O}_{\mathbb{P}^1}
 \]
is also fixed under the $\mathbb{C}_q^*$-action if each $f_i$ is a monomial in $x$ and $y$; that is, $f_i=x^{a_i}y^{b_i}$ with $a_i+b_i=d_i$. Thus, a general $T \times \mathbb{C}_q^*$-fixed point of the Quot scheme is of the form
\begin{equation}\label{1}
 \bigoplus_{i=1}^n \delta_i\mathbb{C}_{t_i} \otimes \mathcal{O}_{\mathbb{P}^1}(-a_i-b_i) \to \mathbb{C}^n \otimes \mathcal{O}_{\mathbb{P}^1},
\end{equation}
subject to the following constraints:
\begin{align*}
\delta_i \in \{0,1\} , \forall i, \\
    \sum \delta_i = r, \\
    \sum a_i+b_i = d,
\end{align*}
where the map $\mathbb{C}_{t_i} \otimes \mathcal{O}_{\mathbb{P}^1}(-a_i-b_i) \to \mathbb{C}_{t_i} \otimes \mathcal{O}_{\mathbb{P}^1}$ is given by the monomial $x^{a_i}y^{b_i}$, for all $i$ such that $\delta_i \ne 0$.

It follows that there is a combinatorial description of $T \times \mathbb{C}^*$-fixed points of the Quot scheme. In fact, the discussion above shows that there is a one-to-one correspondence between the fixed points of the Quot scheme and the subset of $\mathbb{Z}^{3n}$ given by: 
\[
\left\{
(a_1,\dots,a_n,b_1,\dots,b_n,\delta_1,\dots,\delta_n) \in \mathbb{Z}^{3n}
\ \Bigg|\
\begin{array}{l}
\delta_i \in \{0,1\}\\
\sum \delta_i = r\\
\sum (a_i + b_i) = d
\end{array}
\right\}.
\]
We denote the set of $T \times \mathbb{C}_q^*$-fixed point of the Quot scheme by $Quot_{\mathbb{P}^1,d}(\mathbb{C}^n,r)^{T \times \mathbb{C}_q^*}$.

A $T \times \mathbb{C}_q^*$-fixed point of the form
\[
0 \to \bigoplus \delta_i\mathbb{C}_{t_i} \otimes \mathcal{O}_{\mathbb{P}^1}(-a_i-b_i) \to \mathbb{C}^n \otimes \mathcal{O}_{\mathbb{P}^1}
\]
 is supported at zero if and only if all $b_i$=0. In other words, such a fixed point takes the form:
\begin{equation} \label{E:quotfixedpt}
0 \to \bigoplus \delta_i\mathbb{C}_{t_i} \otimes \mathcal{O}_{\mathbb{P}^1}(-d_i) \to \mathbb{C}^n \otimes \mathcal{O}_{\mathbb{P}^1}
\end{equation}
where each inclusion 
\[
\mathbb{C}_{t_i} \otimes \mathcal{O}_{\mathbb{P}^1}(-d_i) \to \mathbb{C}_{t_i} \otimes \mathcal{O}_{\mathbb{P}^1}
\]
 is given by multiplication by $x^{d_i}$ for all $i$ such that $\delta_i \ne 0$.
 We denote the set of $T \times \mathbb{C}_q^*$-fixed point of the Quot scheme supported at zero by $Quot_{\mathbb{P}^1,d}(\mathbb{C}^n,r)_{0}^{T \times \mathbb{C}_q^*}$.

 We now analyze the tangent space of the Quot scheme at  $T \times \mathbb{C}_q^*$-fixed points that are supported at zero, and explicitly describe its decomposition into one-dimensional $T \times \mathbb{C}_q^*$-modules.

 \begin{lemma}\label{fixed-point lemma}
Assume that $x$ is a $T \times \mathbb{C}_q^*$-fixed point of the Quot scheme corresponding to $\bigoplus_i \delta_i\mathbb{C}_{t_i} \otimes \mathcal{O}_{\mathbb{P}^1}(-a_i-b_i) \to \mathbb{C}^n \otimes \mathcal{O}_{\mathbb{P}^1}$. 
Then the $T \times \mathbb{C}_q^*$-module $T_xQuot_{\mathbb{P}^1,d}(\mathbb{C}^n,r)$ decomposes into a direct sum of one-dimensional weight spaces with the following weights:
\begin{itemize}
    \item $\frac{t_j}{t_i}.q^{-a_i+r}$ for $(i,j)$ such that $\delta_i= 1 , \delta_j=0$ and $0 \leq r \leq a_i+b_i$;
    \item $q^{-a_i+r}$  for $(i,j)$ such that $i=j$,$\delta_i=1$ and $0 \leq r \leq a_i+b_i$ and $r \neq a_i$;
    \item $\frac{t_j}{t_i}.q^r$ for $(i,j)$ such that $i \ne j, \delta_i = \delta_j=1$ and $-a_i \leq r \leq -a_i+a_j-1$, $a_j \geq 1$;
    \item $\frac{t_j}{t_i}.q^r$ for $(i,j)$ such that $i \ne j, \delta_i = \delta_j=1$ and $b_i-b_j+1 \leq r \leq b_i$, $b_j \geq 1.$
\end{itemize}
 \end{lemma}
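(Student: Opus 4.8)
The plan is to identify the tangent space with a $\mathrm{Hom}$-sheaf and then decompose that $\mathrm{Hom}$ space into one-dimensional weight spaces one summand at a time. Recall that at a closed point $[\,\mathcal{K}\subset\mathcal{V}\,]$ of the Quot scheme, with $\mathcal{V}=\mathbb{C}^n\otimes\mathcal{O}_{\mathbb{P}^1}$ and quotient $\mathcal{Q}=\mathcal{V}/\mathcal{K}$, Grothendieck's deformation theory gives a canonical $T\times\mathbb{C}_q^*$-equivariant identification
\[
T_x Quot_{\mathbb{P}^1,d}(\mathbb{C}^n,r)\;\cong\;\mathrm{Hom}_{\mathcal{O}_{\mathbb{P}^1}}(\mathcal{K},\mathcal{Q}),
\]
the obstructions living in $\mathrm{Ext}^1(\mathcal{K},\mathcal{Q})$, which vanishes because the Quot scheme is smooth (Str\o mme). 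So the whole problem reduces to decomposing $\mathrm{Hom}(\mathcal{K},\mathcal{Q})$ as a $T\times\mathbb{C}_q^*$-module, and I would do this by splitting both $\mathcal{K}$ and $\mathcal{Q}$ into their equivariant line-bundle and torsion constituents.

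Writing $0=[0:1]$ and $\infty=[1:0]$ for the two $\mathbb{C}_q^*$-fixed points, the kernel splits as $\mathcal{K}=\bigoplus_{i:\delta_i=1}\mathcal{K}_i$ with $\mathcal{K}_i=\mathbb{C}_{t_i}\otimes\mathcal{O}_{\mathbb{P}^1}(-a_i-b_i)$ included via the monomial $x^{a_i}y^{b_i}$, while the quotient splits as
\[
\mathcal{Q}\;=\;\Big(\bigoplus_{j:\delta_j=0}\mathbb{C}_{t_j}\otimes\mathcal{O}_{\mathbb{P}^1}\Big)\ \oplus\ \Big(\bigoplus_{i:\delta_i=1}\mathbb{C}_{t_i}\otimes\mathcal{T}_i\Big),
\]
where $\mathcal{T}_i=\mathcal{O}_{\mathbb{P}^1}/(x^{a_i}y^{b_i})$ is a torsion sheaf of length $a_i+b_i$, namely length $a_i$ at $0$ and length $b_i$ at $\infty$. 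Consequently $\mathrm{Hom}(\mathcal{K},\mathcal{Q})$ breaks into three kinds of blocks: the free blocks $\mathrm{Hom}(\mathcal{K}_i,\mathbb{C}_{t_j}\otimes\mathcal{O}_{\mathbb{P}^1})$ for $\delta_i=1,\delta_j=0$; the diagonal torsion blocks $\mathrm{Hom}(\mathcal{K}_i,\mathbb{C}_{t_i}\otimes\mathcal{T}_i)$; and the off-diagonal torsion blocks $\mathrm{Hom}(\mathcal{K}_i,\mathbb{C}_{t_j}\otimes\mathcal{T}_j)$ with $i\neq j$, $\delta_i=\delta_j=1$. These three families are precisely what will produce the four bullets of the lemma.

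Each block is then an elementary equivariant cohomology computation on $\mathbb{P}^1$. For the free blocks, $\mathrm{Hom}(\mathcal{K}_i,\mathbb{C}_{t_j}\otimes\mathcal{O}_{\mathbb{P}^1})=\mathbb{C}_{t_j/t_i}\otimes H^0(\mathbb{P}^1,\mathcal{O}_{\mathbb{P}^1}(a_i+b_i))$, whose monomial basis, read with the equivariant structure coming from the embedding $x^{a_i}y^{b_i}$, carries the weights $\frac{t_j}{t_i}q^{-a_i+r}$ for $0\le r\le a_i+b_i$; this is the first bullet. For the torsion blocks I would localize at the two fixed points and compute inside $\mathbb{C}[u]/u^{a_j}$ (at $0$, with $u=x/y$) and $\mathbb{C}[v]/v^{b_j}$ (at $\infty$, with $v=y/x$): using that $\mathcal{K}_i$ is generated near $0$ by $u^{a_i}$ and near $\infty$ by $v^{b_i}$, the homomorphisms into $\mathbb{C}_{t_j}\otimes\mathcal{T}_j$ acquire weights $\frac{t_j}{t_i}q^r$ with $-a_i\le r\le -a_i+a_j-1$ from $0$ and $b_i-b_j+1\le r\le b_i$ from $\infty$, which are exactly bullets three and four. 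The diagonal case $i=j$ is the same computation with $t_j/t_i=1$; it yields $q^{r}$ for $-a_i\le r\le -1$ and $1\le r\le b_i$, i.e.\ $q^{-a_i+r}$ for $0\le r\le a_i+b_i$ with $r\neq a_i$, the single excluded weight $q^{0}$ being precisely the class of the inclusion $\mathcal{K}_i\hookrightarrow\mathcal{O}_{\mathbb{P}^1}$ that dies in the quotient. This is the second bullet.

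The genuinely delicate part is not conceptual but bookkeeping: fixing one consistent normalization for the $\mathbb{C}_q^*$-weights (so that $u=x/y$ scales with weight $+1$ at $0$) and propagating it correctly through duals, tensor products, and the two local charts, so that the index ranges and the lone excluded weight in the diagonal block come out exactly as stated rather than reflected by $q\mapsto q^{-1}$ or shifted by a constant. As a global consistency check I would verify that the total number of weights equals $\dim Quot_{\mathbb{P}^1,d}(\mathbb{C}^n,r)=nd+r(n-r)$: summing $(n-r)(a_i+b_i+1)$ over the free blocks, $a_i+b_i$ over the diagonal blocks, and $a_j+b_j$ over each ordered off-diagonal pair returns exactly $nd+r(n-r)$, confirming that no weight has been over- or under-counted. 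Finally, specializing $b_i=0$ recovers the fixed points supported at $0$ used throughout the rest of the paper, in which bullet four and the $\infty$-contributions to the other bullets simply drop out.
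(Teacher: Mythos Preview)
Your proposal is correct and follows essentially the same route as the paper: identify the tangent space with $\mathrm{Hom}(\mathcal{K},\mathcal{Q})$ via Grothendieck's deformation theory, split it into blocks indexed by pairs $(i,j)$ according to the values of $\delta_i,\delta_j$, and compute the weights block by block in local charts. Two minor differences are worth noting: the paper only proves the case supported at zero ($b_i=0$), so your outline is actually more complete than theirs; and for the diagonal block ($i=j$) the paper uses the short exact sequence $0\to\mathcal{O}(-d_i)\to\mathcal{O}\to\mathcal{O}/\mathcal{O}(-d_i)\to 0$ together with $\mathrm{Hom}(\mathcal{O}(-d_i),-)$ to isolate the excluded weight, rather than your direct localization at the two fixed points---both arguments work and yield the same answer. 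Your global dimension check against $nd+r(n-r)$ is a useful addition that the paper does not include.
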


 \begin{proof}
 We prove this lemma only for fixed points of the Quot scheme that are supported at zero, as this is the case relevant to our paper. Therefore, we restrict our attention to points corresponding to: 
 \[
 \bigoplus_i \delta_i\mathbb{C}_{t_i} \otimes \mathcal{O}_{\mathbb{P}^1}(-d_i) \to \mathbb{C}^n \otimes \mathcal{O}_{\mathbb{P}^1},
 \]
 where $a_i = d_i$, and $b_i=0$. In this situation, the fourth type of weight listed in the lemma does not contribute to the decomposition.
 
    By Grothendieck's theorem \cite[cor 5.3]{GRo} the tangent space of the Quot scheme at a point $0 \to \mathcal{A} \to \mathbb{C}^n \otimes \mathcal{O}_{\mathbb{P}^1} \to \mathcal{B} \to 0$ is isomorphic to the vector space
    \[
    Hom(\mathcal{A},\mathcal{B}).
    \]

Therefore, the tangent space at the fixed point $x$ is given by:
\[
Hom(\bigoplus_i \delta_i\mathbb{C}_{t_i} \otimes \mathcal{O}_{\mathbb{P}^1}(-d_i),\mathbb{C}^n \otimes \mathcal{O}_{\mathbb{P}^1}/\oplus_i \delta_i\mathbb{C}_{t_i} \otimes \mathcal{O}_{\mathbb{P}^1}(-d_i)). 
\]
This can be written as:
\begin{equation} \label{Tangent Decomposition}
    \bigoplus_{i,j} Hom(\delta_i\mathbb{C}_{t_i} \otimes \mathcal{O}_{\mathbb{P}^1}(-d_i),\mathbb{C}_{t_j} \otimes\mathcal{O}_{\mathbb{P}^1} /\delta_j.\mathbb{C}_{t_j} \otimes \mathcal{O}_{\mathbb{P}^1}(-d_j)).
\end{equation}
We now consider three cases for the sum in the above expression:\\

\begin{itemize}

\item Case 1: $\delta_i= 1 , \delta_j=0$.\\
    In this case, the corresponding component is:
    \[
    Hom(\mathbb{C}_{t_i} \otimes \mathcal{O}_{\mathbb{P}^1}(-d_i),\mathbb{C}_{t_j} \otimes \mathcal{O}_{\mathbb{P}^1}),
    \]
    which can be viewed as the inclusion of $\mathbb{C}[x,y]-$modules: 
    \[
    \langle x^{d_i} \rangle \hookrightarrow \mathbb{C}[x,y]. 
\]
    On the open set $U_0$ (where $x \neq 0$), the morphism is given by: 
    \[
    \langle1 \rangle  \hookrightarrow \mathbb{C}[\frac{y}{x}],
    \]
    which is represented by $p(\frac{y}{x})$ where $p$ is a polynomial in $\frac{y}{x}$. Similarly, on the open set $U_1$ (where $y \neq 0$), the morphism is: 
    \[
    \langle (\frac{x}{y})^{d_i} \rangle \to \mathbb{C}[\frac{x}{y}],
    \]
    which is given by $(\frac{y}{x})^{d_i}.q(\frac{x}{y})$ where $q(\frac{x}{y})$ is a polynomial in $\frac{x}{y}$. 
    Let $X=\frac{x}{y}$, and $Y = \frac{y}{x}$. On the intersection of $U_0$ and $U_1$, we have $X=\frac{1}{Y}$, and $(\frac{y}{x})^{d_i}q(\frac{x}{y}) = p(\frac{y}{x})$, so $p(\frac{1}{X}) = \frac{q(X)}{X^{d_i}}$. Thus, $p$ and also $q$ must both be polynomials of degree at most $d_i$. 
      It follows that on $U_0$, $Hom(\mathcal{O}_{\mathbb{P}^1}(-d_i) , \mathcal{O}_{\mathbb{P}^1})$ has a basis: 
     \[
     \{1,Y,\dots,Y^{d_i}\},
     \]
     and since $Y=\frac{y}{x}$, the weights are:
     \[
     1,q^{-1},\dots,q^{-d_i}.
     \]
     On $U_1$, $Hom(\mathcal{O}_{\mathbb{P}^1}(-d_i) , \mathcal{O}_{\mathbb{P}^1})$ has a basis:
     \[
     \frac{1}{X^{d_i}}\{1,X,\dots,X^{d_i}\} = \{1,\frac{1}{X},\dots,\frac{1}{X^{d_i}}\},
     \]
     and since $X=\frac{x}{y}$, the weights are:
     \[
     1,q^{-1},\dots,q^{-d_i}.
     \]
     Hence, these weights are weights of $Hom(\mathcal{O}_{\mathbb{P}^1}(-d_i) , \mathcal{O}_{\mathbb{P}^1})$.

Thus, the weights of $Hom(\mathbb{C}_{t_i} \otimes \mathcal{O}_{\mathbb{P}^1}(-d_i),\mathbb{C}_{t_j} \otimes \mathcal{O}_{\mathbb{P}^1})$ are:
\[
\frac{t_j}{t_i}.q^{-d_i+r},
\]
 where $0 \leq r \leq d_i$.

\item Case 2: $i=j$,$\delta_i=1$.\\
In this case, the corresponding component is 
\[
Hom(\mathbb{C}_{t_i} \otimes \mathcal{O}_{\mathbb{P}^1}(-d_i),\mathbb{C}_{t_i} \otimes \mathcal{O}_{\mathbb{P}^1}/\mathbb{C}_{t_i} \otimes \mathcal{O}_{\mathbb{P}^1}(-d_i)).
\]
Note that since $\mathbb{C}_{t_i}$ acts on both sides, the weight corresponding to $\mathbb{C}_{t_i}$ does not contribute, and we are therefore only tracking the $q$-weights here.

Consider the following short exact sequence: 
    \[0 \to \mathcal{O}_{\mathbb{P}^1}(-d_i) \to  \mathcal{O}_{\mathbb{P}^1} \to \mathcal{O}_{\mathbb{P}^1}/\mathcal{O}_{\mathbb{P}^1}(-d_i) \to 0.\]
    
    Applying the functor $Hom(\mathcal{O}_{\mathbb{P}^1}(-d_i),-)$, we obtain the exact sequence:
    \begin{align*}
   \scalebox{0.9}{$0 \to Hom(\mathcal{O}_{\mathbb{P}^1}(-d_i),\mathcal{O}_{\mathbb{P}^1}(-d_i)) \to  Hom(\mathcal{O}_{\mathbb{P}^1}(-d_i),\mathcal{O}_{\mathbb{P}^1}) \to Hom(\mathcal{O}_{\mathbb{P}^1}(-d_i), \mathcal{O}_{\mathbb{P}^1}/\mathcal{O}_{\mathbb{P}^1}(-d_i))   \to 0$}.
    \end{align*}
    As a vector space, we have the decomposition:
    \begin{align*}
    \scalebox{0.9}{$Hom(\mathcal{O}_{\mathbb{P}^1}(-d_i),\mathcal{O}_{\mathbb{P}^1}) = Hom(\mathcal{O}_{\mathbb{P}^1}(-d_i),\mathcal{O}_{\mathbb{P}^1}(-d_i)) \oplus Hom(\mathcal{O}_{\mathbb{P}^1}(-d_i), \mathcal{O}_{\mathbb{P}^1}/\mathcal{O}_{\mathbb{P}^1}(-d_i))$.}
    \end{align*}
    Note that
    \[
    Hom(\mathcal{O}_{\mathbb{P}^1}(-d_i),\mathcal{O}_{\mathbb{P}^1}(-d_i)) \cong H^0(\mathbb{P}^1,\mathcal{O}_{\mathbb{P}^1}) \cong \mathbb{C},
    \]
    which has weight 1. The weight decomposition of $Hom(\mathcal{O}_{\mathbb{P}^1}(-d_i),\mathcal{O}_{\mathbb{P}^1})$  is known from case 1, so the weights for $Hom(\mathcal{O}_{\mathbb{P}^1}(-d_i), \mathcal{O}_{\mathbb{P}^1}/\mathcal{O}_{\mathbb{P}^1}(-d_i))$ are given by $q^{-d_i+r}$, where $ 0 \leq r \leq d_i-1$. 

    \item Case 3: $i \ne j, \delta_i = \delta_j=1$.
    
     The computation proceeds analogously to Case 1. The only important point here is that if $d_j = a_j=0$, then in \eqref{Tangent Decomposition} we have terms:
     \[
     Hom(\mathbb{C}_{t_i} \otimes \mathcal{O}_{\mathbb{P}^1}(-d_i),\mathbb{C}_{t_j} \otimes\mathcal{O}_{\mathbb{P}^1} /\mathbb{C}_{t_j} \otimes \mathcal{O}_{\mathbb{P}^1}) = 0.
     \]
     Therefore, these terms contribute nothing, and we only need to focus of $a_j \geq 1$ for these terms.
    \end{itemize}
\end{proof}

\begin{Corollary}\label{Corollary2.5}
    Let $x$ be a $T \times \mathbb{C}_q^*$-fixed point supported at $0 \in \mathbb{P}^1$, given by:
    \[
    \mathbb{C}_{t_{i_1}}\mathcal{O}_{\mathbb{P}^1}(-d_{i_1}) \oplus \cdots \mathbb{C}_{t_{i_r}}\mathcal{O}_{\mathbb{P}^1}(-d_{i_r}) \to \bigoplus_{i=1}^n \mathbb{C}_{t_i} \mathcal{O}_{\mathbb{P}^1},
    \] 
    where $\{i_1, \cdots i_r\} \subset  \{1, \cdots n\}$. Then the following formula holds for the $\lambda_y$-class of the cotangent space of the Quot scheme at $x$:
    \[
    \lambda_y(T_x^*Quot) = ABC,
    \]
    where:
    \[
    A = \prod_{i \in \{i_1,\dots,i_r\}} \prod_{j \notin \{i_1,\dots,i_r\}} \prod_{m=0}^{d_i}(1+y{\frac{t_i}{t_j}}q^{m}),
    \]
    \[
    B = \prod_{i \in \{i_1,\dots,i_r\}}\prod_{m=1}^{d_i}(1+yq^{m}),
    \]
    \[
    C = \prod_{i,j \in \{i_1,\dots,i_r\}}^{i \ne j}\prod_{m=-d_i}^{-d_i+d_j-1}(1+y{\frac{t_i}{t_j}}q^{-m})
    \]
\end{Corollary}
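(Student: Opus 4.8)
The plan is to obtain the formula directly from the weight decomposition of the tangent space supplied by \Cref{fixed-point lemma}, dualize, and apply the product formula for the $\lambda_y$-class. Recall from the preliminaries that for a $T \times \mathbb{C}_q^*$-module $V = \bigoplus_\mu V_\mu$ with one-dimensional weight spaces one has $\lambda_y(V) = \prod_\mu (1 + y\mu)$, and that the cotangent space $T_x^* Quot = (T_x Quot)^\vee$ has weights given by the multiplicative inverses of the tangent weights. Thus $\lambda_y(T_x^* Quot)$ is the product of $(1 + y\mu^{-1})$ over all tangent weights $\mu$ listed in \Cref{fixed-point lemma}, and the task reduces to sorting these inverted weights into the three advertised products $A$, $B$, $C$.

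Since $x$ is supported at $0$, I would first record that $b_i = 0$ and $a_i = d_i$ for every $i$ with $\delta_i = 1$; as already observed in the proof of \Cref{fixed-point lemma}, this makes the fourth family of tangent weights empty, leaving exactly three families to account for the three factors. Next I would dualize and reindex each family in turn. For the family $\frac{t_j}{t_i} q^{-d_i + r}$ (with $\delta_i = 1$, $\delta_j = 0$, $0 \le r \le d_i$), the inverse is $\frac{t_i}{t_j} q^{\,d_i - r}$, and the substitution $m = d_i - r$ sweeps $m$ over $0 \le m \le d_i$, yielding $A$. For the diagonal family $q^{-d_i + r}$ (with $i = j$, $\delta_i = 1$, $0 \le r \le d_i - 1$), the inverse is $q^{\,d_i - r}$, and $m = d_i - r$ now sweeps $1 \le m \le d_i$, yielding $B$. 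For the off-diagonal family $\frac{t_j}{t_i} q^{r}$ (with $i \ne j$, $\delta_i = \delta_j = 1$, $-d_i \le r \le -d_i + d_j - 1$), the inverse is $\frac{t_i}{t_j} q^{-r}$, and keeping $m = r$ reproduces $C$ verbatim.

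The argument is essentially bookkeeping, so the only point requiring care is the reindexing after dualization. I expect the main subtlety to be verifying that the exclusion $r \ne a_i$ in the diagonal case of \Cref{fixed-point lemma} translates, under $m = d_i - r$, into the omission of the endpoint $m = 0$, so that the factor $B$ correctly begins at $m = 1$ rather than $m = 0$; matching the index ranges of $A$ and $C$ is then a routine check. No conceptual obstacle arises beyond this careful tracking of ranges.
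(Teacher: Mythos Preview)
Your proposal is correct and is precisely the approach the paper intends: the corollary is stated immediately after \Cref{fixed-point lemma} with no separate proof, being an immediate consequence obtained by dualizing the tangent weights and multiplying the corresponding $\lambda_y$-factors. Your careful tracking of the reindexing (especially the diagonal case where $r \ne a_i = d_i$ becomes $m \ne 0$) simply makes explicit what the paper leaves to the reader.
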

We will use this expression in the next section when we discuss $J$-function.
\section{J-function}\label{J-function section}
 \subsection{$J$-function}
 We begin by recalling the definitions of the moduli space of stable maps and the $K$-theoretic $J$-function (see \cite{Taipale}). Our goal is to establish a connection between the $J$-function, the Quot scheme, and the computations in the previous section. To achieve this, we employ several key results from \cite{Bertram:2003qd}.

 Let $X$ be a nonsingular projective variety. A map from a projective, nodal, connected, $n$-pointed curve of arithmetic genus zero $(C, p_1, \dots, p_n)$ to $X$ is called stable if the following conditions hold:
 \begin{itemize}
     \item The marked points $p_1,\dots,p_n$ are nonsingular;
     \item If an irreducible component of $C$ maps to a point in $X$, it must have at least three special points, which are either marked points or nodes.
 \end{itemize}
 For a nonsingular projective variety $X$ and $\beta \in H_2(X,\mathbb{Z})$, there exists a moduli space $\overline{M}_{0,n}(X,\beta)$ parametrizing stable maps from $n$-pointed, genus-zero curves of class $\beta$; that is, $f_*([C]) = \beta$.
Of particular interest is the graph space $G_{X,\beta} := \overline{M}_{0,0}(X \times \mathbb{P}^1,(\beta,1))$. 
 The action \eqref{action} naturally induces a $\mathbb{C}_q^*$-action on $G_{X,\beta}$. Its fixed locus consists of several connected components, one of which is $F:=\overline{M}_{0,1}(X,\beta)$; see \cite[Section 1]{Bertram:2003qd}.
 As we said before, in this paper we focus on $X=G(r,n)$. 
 \begin{definition}
     The $K$-theoretic $J$-function is defined as the power series:
 \begin{align*}
 J^{X,K}(q,Q) = \sum_{d=0}^{\infty}Q^dJ_d^{X,K}(q),
\end{align*}
 where
 \[
 J_d^{X,K}(q) = ev_*\!\left(\frac{\mathcal{O}_{\overline{M}_{0,1}(X,d)}}{\lambda_{-1}(N^*_{F/G_{X,d}})}\right),
 \]

  and $ev:\overline{M}_{0,1}(X,d) \to X$ is the evaluation map at the single marked point.
  \begin{remark}
      Here, $J_d$ denotes the coefficient of the $J$-function. In previous sections, when we discussed the $I$-function, we used $I_d$. Since, for $G(r,n)$, the $I$-function coincides with the $J$-function, we hope this notation will not confuse the readers.
\end{remark}

 \end{definition}
The following diagram, introduced in \cite{Bertram:2003qd}, plays a crucial role in computing the cohomological $J$-function. The same diagram was used by Taipale in \cite{Taipale} to compute the $K$-theoretic $J$-function.

\begin{center}
 \begin{tikzcd}
Quot_{\mathbb{P}^1,d}(\mathbb{C}^n,r)   \arrow{r}{\wedge^r} & \mathbb{P}_d^{\binom{n}{r}-1}   \arrow[dr, phantom] & G_{X,d} \arrow[l]{} \\
\sqcup_{\{d_i\}} i_{d_i}(\mathrm{Fl}) \arrow{u}{\kappa}  \arrow{r}{q}        & \mathbb{P}^{{\binom{n}{r}}-1} \arrow{u}{\iota}                                        & \overline{M}_{0,1}(X ,d) \arrow{u}{i} \arrow{l}{p}
\\
Quot_{\mathbb{P}^1,d}(\mathbb{C}^n,r)_{0}^{T \times \mathbb{C}^*} \arrow{u}{}  \arrow{r}{}        & (\mathbb{P}^{{\binom{n}{r}}-1})^{T} \arrow{u}{}                                        & \overline{M}_{0,1}(X ,d)^{T} \arrow{u}{} \arrow{l}{}
\end{tikzcd}
\end{center}  
Here:
\begin{itemize}
\item $\mathbb{P}_d^{\binom{n}{r}-1}$ is Drinfeld's compactification of $Hom_d(\mathbb{P}^1,G(r,n))$. Via the Plücker embedding, a map $\mathbb{P}^1 \to G(r,n)$ can be viewed as a map $\mathbb{P}^1 \to \mathbb{P}^{\binom{n}{r}-1}$, which is determined by $\binom{n}{r}$ homogeneous polynomials of degree $d$ without a common factor. Drinfeld's compactification relaxes this coprimality condition. When all polynomials share a common factor $x^d$, we recover a copy of $\mathbb{P}^{\binom{n}{r}-1}$, which is fixed under the $\mathbb{C}_q^*$-action.
\item Each space in the first row has a $\mathbb{C}_q^*$-action, and by \Cref{Relative fixed-points} and the discussion above, the second row consists of $\mathbb{C}_q^*$-fixed loci of the first row.
\item Each space in the first row also admits a $T$-action, and the third row lists the components fixed under $T \times \mathbb{C}_q^*$.
\end{itemize}
From the previous section, we know that the fixed locus $Quot_{\mathbb{P}^1,d}(\mathbb{C}^n,r)_{0}^{T \times \mathbb{C}_q^*}$ is a finite set of points, as described in \eqref{E:quotfixedpt}.
We recall the following lemma (Lemma 6 in \cite{Taipale}):
\begin{lemma} \label{function-defined}
    $J_d^{G(r,n),K}(q) = \sum_{\{d_i\}}\rho_*\left(\frac{1}{\lambda_{-1}(T^*_{i_{\{d_i\}} \mathrm{Fl}})}\right)$,
    where 
    \begin{align}\label{map:rho}
        \rho:  \sqcup_{\{d_i\}} i_{d_i}(\mathrm{Fl}) \to G(r,n)
    \end{align}
     is a morphism whose restriction to each flag variety is the natural projection from the flag bundle $\mathrm{Fl}(m_1,\dots,m_k,r;n) \to G(r,n)$. 
\end{lemma}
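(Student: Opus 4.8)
The plan is to transport the defining graph-space expression for $J_d^{G(r,n),K}(q)$ onto the Quot scheme by means of the comparison diagram above, exploiting that both spaces compactify the space of degree $d$ maps $\mathbb{P}^1 \to G(r,n)$ and that $Quot_{\mathbb{P}^1,d}(\mathbb{C}^n,r)$ is smooth. First I would unwind the definition: $J_d$ is the pushforward under $ev$ of $\mathcal{O}/\lambda_{-1}(N^*_{F/G_{X,d}})$ from the distinguished $\mathbb{C}_q^*$-fixed component $F = \overline{M}_{0,1}(X,d)$ of the graph space $G_{X,d}$, namely the component on which a genus-zero degree-$d$ bubble is attached over $0 \in \mathbb{P}^1$ while the main $\mathbb{P}^1$ maps with degree $(0,1)$. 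Geometrically this is exactly the component matching the Quot-scheme fixed locus supported at $0$, which by \Cref{Relative fixed-points} is the disjoint union $\sqcup_{\{d_i\}} i_{\{d_i\}}(\mathrm{Fl})$ of flag bundles over $G(r,n)$, and the evaluation at the single marked point of $F$ corresponds to the flag-bundle projection $\rho$ recording the value of the underlying quotient at $0$.

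Next I would use the vertical maps $i$ and $\kappa$ together with the maps $p$ and $q$ of the diagram, which present both $F$ and $\sqcup_{\{d_i\}} i_{\{d_i\}}(\mathrm{Fl})$ over the common $\mathbb{C}_q^*$-fixed projective space $\mathbb{P}^{\binom{n}{r}-1} \subset \mathbb{P}^{\binom{n}{r}-1}_d$ inside Drinfeld's compactification (the locus where all Plücker coordinates share the factor $x^d$). Since $ev$ and $\rho$ are compatible with these presentations, it suffices to compare the two localization contributions. The core step is to match the $\mathbb{C}_q^*$-moving normal data: I would show that $\lambda_{-1}(N^*_{F/G_{X,d}})$ agrees, after transport through the diagram, with the $\lambda_{-1}$-class of the conormal directions of $i_{\{d_i\}}(\mathrm{Fl})$ inside the smooth Quot scheme, i.e. the class denoted $\lambda_{-1}(T^*_{i_{\{d_i\}}\mathrm{Fl}})$. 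Because the Quot scheme is smooth, its virtual structure sheaf is the ordinary $\mathcal{O}$, so the Quot-side contribution is honestly $1/\lambda_{-1}$ of a genuine normal bundle with no obstruction correction, which is what allows the clean formula.

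The main obstacle is precisely this identification of the two deformation theories. On the graph-space side $N_{F/G_{X,d}}$ is governed by the deformation-obstruction complex of stable maps (sections of pulled-back bundles on the bubble, glued along the node), whereas on the Quot side the tangent space is the $Hom(\mathcal{A},\mathcal{B})$ from Grothendieck's description used in \Cref{fixed-point lemma}; reconciling their $\mathbb{C}_q^*$-weights, and checking that the genuinely obstructed directions on the stable-map side are exactly the ones absent in the smooth Quot model, is the heart of Bertram's comparison, which I would carry over to $K$-theory by replacing each equivariant Euler factor with the corresponding $\lambda_{-1}$-factor and invoking multiplicativity of $\lambda_{-1}$ over the weight decomposition. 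Once the moving parts are matched weight by weight, the equality of the two pushforwards to $G(r,n)$ follows from functoriality of proper pushforward applied to $ev$ and $\rho$, yielding $J_d^{G(r,n),K}(q) = \sum_{\{d_i\}}\rho_*\bigl(1/\lambda_{-1}(T^*_{i_{\{d_i\}}\mathrm{Fl}})\bigr)$, after which a further $T$-localization on each flag fiber (recombining the conormal and fiber-cotangent weights into $T^*_xQuot$, as in \Cref{Corollary2.5}) would recover the fixed-point formula announced in the introduction.
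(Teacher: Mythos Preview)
The paper does not give its own proof of this lemma; it is quoted verbatim as Lemma~6 of Taipale \cite{Taipale}. Your sketch is precisely the Bertram--Taipale argument underlying that reference---transport along the comparison diagram through Drinfeld's compactification, identification of the $\mathbb{C}_q^*$-fixed loci, and the weight-by-weight matching of moving normal data with Euler classes replaced by $\lambda_{-1}$-classes---so the approaches coincide. (Your closing remark about further $T$-localizing to reach the fixed-point formula is exactly what the paper carries out separately as \Cref{Khaste}, using this lemma together with the correspondence of residues.)
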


The following result, known as the correspondence of residues, will also be used:
\begin{lemma}\label{residue}
    Let $X$ and $Y$ be nonsingular  $T$-schemes, and let $g:X \to Y$ be a proper, $T$-equivariant morphism. $W = \{W_k\}$, and $V$ are components of the $T$-fixed loci of $X$, and $Y$ respectively. We summarize the situation as follows:
\begin{equation*}
    \begin{tikzcd} 
  W \ar[r, "\iota"] \ar[d, "f"] & X \ar[d, "g"] \\
  V \ar[r, "\iota'"] & Y
\end{tikzcd}
\end{equation*}
Then, for $\alpha \in K_T(X)$, we have:
\[
\frac{\iota'^*g_*(\alpha)}{\lambda_{-1}N^{\vee}_{V/Y}} = f_*\sum_{k}\frac{\iota^*\alpha}{\lambda_{-1}N^{\vee}_{\{W_k\}/X}}.
\]
\end{lemma}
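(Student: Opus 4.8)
The plan is to reduce the identity to the Atiyah--Bott localization isomorphism on the source $X$, combined with the functoriality of $K$-theoretic pushforward and the self-intersection (excess) formula. Since $g$ is $T$-equivariant, it carries $X^T$ into $Y^T$, and by connectedness each component $W_k$ of $X^T$ maps into a single component of $Y^T$. The components relevant to the right-hand side are precisely those $W_k$ with $g(W_k)\subseteq V$, and for each such $k$ the square in the statement commutes at the level of components, giving a factorization $g\circ\iota_k=\iota'\circ f_k$.

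First I would apply the component version of \Cref{Localization-Theorem} in $K_T(X)_{\mathrm{loc}}$ to write
\[
\alpha = \sum_{k}(\iota_k)_*\Bigl(\frac{\iota_k^*\alpha}{\lambda_{-1}N^{\vee}_{W_k/X}}\Bigr),
\]
the sum running over all components of $X^T$, where each denominator is invertible after localization because the $T$-weights of the normal bundle to a fixed component are nonzero. Pushing forward by $g_*$ and using $g_*(\iota_k)_*=(\iota')_*(f_k)_*$ from the commuting squares, I would group the terms according to the component of $Y^T$ that each $W_k$ maps to. This expresses $g_*\alpha$ as a sum of classes of the form $(\iota_{V'})_*(\,\cdots)$, one for each component $V'$ of $Y^T$. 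Applying $(\iota')^*$ and using that $(\iota')^*(\iota_{V'})_*=0$ in $K_T(V)_{\mathrm{loc}}$ whenever $V'\neq V$ (disjoint supports among fixed components), only the contribution from $V$ survives, yielding
\[
(\iota')^*g_*(\alpha) = (\iota')^*(\iota')_*\,f_*\Bigl(\sum_{k}\frac{\iota_k^*\alpha}{\lambda_{-1}N^{\vee}_{W_k/X}}\Bigr),
\]
where now the sum is over those $W_k$ with $g(W_k)\subseteq V$.

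Next I would invoke the self-intersection formula $(\iota')^*(\iota')_*\beta=\lambda_{-1}(N^{\vee}_{V/Y})\cdot\beta$ for $\beta\in K_T(V)$, which holds because $\iota'$ is a regular embedding with normal bundle $N_{V/Y}$. Taking $\beta=f_*\bigl(\sum_k \iota_k^*\alpha/\lambda_{-1}N^{\vee}_{W_k/X}\bigr)$ and dividing by $\lambda_{-1}(N^{\vee}_{V/Y})$, which is again invertible after localization, produces exactly the claimed identity. I expect the main obstacle to be the bookkeeping of the fixed loci in the middle step: one must verify carefully that, after $g_*$, the contributions indexed by components $V'\neq V$ are genuinely annihilated by $(\iota')^*$, so that the operator $(\iota')^*g_*$ isolates precisely the sum over the $W_k$ lying above $V$. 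Once this vanishing and the factorization $g\circ\iota_k=\iota'\circ f_k$ are established, the two standard formulas (localization and self-intersection) close the argument.
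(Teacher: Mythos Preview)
The paper does not give a proof of this lemma: it is stated as a known result (``the correspondence of residues'') and used as a black box in the proof of \Cref{Khaste}. Your argument---localize $\alpha$ on $X$ via the component form of Atiyah--Bott, push forward using $g_*\iota_{k*}=\iota'_*f_{k*}$, kill the contributions from components $V'\neq V$ by disjointness of fixed loci, and then apply the self-intersection formula $(\iota')^*(\iota')_*\beta=\lambda_{-1}(N^\vee_{V/Y})\cdot\beta$---is the standard proof and is correct. There is nothing further to compare.
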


From these two lemmas, we obtain the following:

\begin{Corollary}\label{Khaste}
     The restriction of $J_d^{G(r,n),K}$ to the $T$-fixed point $\langle e_{i_1}, \dots,e_{i_r}\rangle$ of $G(r,n)$ is:
     \[
    {J_d^{G(r,n),K}}\arrowvert_{\langle e_{i_1}, \dots,e_{i_r}\rangle} = \lambda_{-1}T^*_{\langle e_{i_1}, \dots,e_{i_r}\rangle}G(r,n)\sum_{\substack{x \in Quot_{\mathbb{P}^1,d}(\mathbb{C}^n,r)_{0}^{T \times \mathbb{C}_q^*} \\ \rho(x) = \langle e_{i_1}, \ldots, e_{i_r} \rangle}} \frac{1}{\lambda_{-1}T^*_{x}Quot}.
    \]
    Equivalently:
    \[
     {J_d^{G(r,n),K}}\arrowvert_{\langle e_{i_1}, \dots,e_{i_r}\rangle} =\prod_{i \in \{i_1,\dots,i_r\} } \prod_{j \notin \{i_1,\dots,i_r\} }(1-\frac{t_i}{t_j})  \sum_{\substack{x \in Quot_{\mathbb{P}^1,d}(\mathbb{C}^n,r)_{0}^{T \times \mathbb{C}_q^*} \\ \rho(x) = \langle e_{i_1}, \ldots, e_{i_r} \rangle}} \frac{1}{\lambda_{-1}T^*_{x}Quot}
     \]
     
    \end{Corollary}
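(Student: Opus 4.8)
The plan is to combine the two preceding lemmas via the correspondence-of-residues diagram. First I would instantiate \Cref{residue} for the proper $T \times \mathbb{C}_q^*$-equivariant morphism $\rho$ of \eqref{map:rho}, taking $X = \sqcup_{\{d_i\}} i_{d_i}(\mathrm{Fl})$, $Y = G(r,n)$, the fixed component $V = \{\langle e_{i_1}, \dots, e_{i_r}\rangle\}$ downstairs, and the components $W_k$ of $X^{T \times \mathbb{C}_q^*}$ upstairs. By \Cref{lemma 3.1} and \Cref{Relative fixed-points}, the fixed points of $\sqcup_{\{d_i\}} i_{d_i}(\mathrm{Fl})$ are exactly the finite set $Quot_{\mathbb{P}^1,d}(\mathbb{C}^n,r)_0^{T \times \mathbb{C}_q^*}$, so each $W_k$ is a single point $x$. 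Applying \Cref{function-defined}, the class $\alpha$ to which I apply the residue formula is $\frac{1}{\lambda_{-1}(T^*_{i_{\{d_i\}}\mathrm{Fl}})}$, whose pushforward along $\rho$ is precisely $J_d^{G(r,n),K}(q)$.

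Next I would evaluate the residue formula at the fixed point $\langle e_{i_1}, \dots, e_{i_r}\rangle$. Since this fixed point is isolated in $G(r,n)^T$, the normal bundle $N_{V/Y}$ is just the tangent space $T_{\langle e_{i_1},\dots,e_{i_r}\rangle}G(r,n)$, so $\lambda_{-1}N^{\vee}_{V/Y} = \lambda_{-1}T^*_{\langle e_{i_1},\dots,e_{i_r}\rangle}G(r,n)$. On the upstairs side, because each fixed component $W_k$ is a point $x$ of the Quot scheme, $N^{\vee}_{\{W_k\}/X}$ is the full cotangent space $T^*_x Quot$ (here I use smoothness of the Quot scheme, recalled in \Cref{sec:Quot scheme}, and the identification of the relevant flag-bundle fixed loci with these Quot-scheme points). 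The pullback $\iota^*\alpha$ restricts the $\lambda_{-1}$ class of the cotangent space to the point, which is absorbed into the fixed-point contribution. Rearranging \Cref{residue} to solve for the restriction $\iota'^*\rho_*(\alpha) = J_d^{G(r,n),K}\arrowvert_{\langle e_{i_1},\dots,e_{i_r}\rangle}$ gives exactly the first displayed formula, with the sum restricted to those $x$ with $\rho(x) = \langle e_{i_1},\dots,e_{i_r}\rangle$.

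For the second, equivalent formula I would simply substitute the explicit weight decomposition of $T_{\langle e_{i_1},\dots,e_{i_r}\rangle}G(r,n)$: the tangent space at this Grassmannian fixed point has weights $t_j/t_i$ for $i \in \{i_1,\dots,i_r\}$, $j \notin \{i_1,\dots,i_r\}$, so by multiplicativity of $\lambda_y$ one has $\lambda_{-1}T^*_{\langle e_{i_1},\dots,e_{i_r}\rangle}G(r,n) = \prod_{i \in \{i_1,\dots,i_r\}}\prod_{j \notin \{i_1,\dots,i_r\}}(1 - t_i/t_j)$. Substituting this factor yields the second display.

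The main obstacle I anticipate is the bookkeeping required to justify that the upstairs fixed loci appearing in \Cref{function-defined} and in \Cref{residue} agree, and in particular that every $W_k$ is genuinely a reduced point so that the conormal bundle is the full $T^*_x Quot$ rather than a conormal to a positive-dimensional flag-bundle fixed component. This requires being careful that the isolated $T \times \mathbb{C}_q^*$-fixed points of $Quot_{\mathbb{P}^1,d}(\mathbb{C}^n,r)_0$ sit inside the flag-bundle images $i_{\{d_i\}}(\mathrm{Fl})$ as honest fixed points (so the flag directions are all moved by the torus), and that the residue contributions match termwise—a point that is conceptually routine given \Cref{lemma 3.1} and \Cref{Relative fixed-points}, but where sign and weight conventions for conormal versus normal bundles must be tracked exactly so that the $\lambda_{-1}$ factors land in the numerator versus denominator correctly.
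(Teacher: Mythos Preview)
Your overall strategy matches the paper's: instantiate the correspondence of residues for $\rho$, with $X = \sqcup_{\{d_i\}} i_{d_i}(\mathrm{Fl})$ and $Y = G(r,n)$, and evaluate at the isolated fixed point. The second displayed formula follows exactly as you say.

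There is, however, a genuine gap in your identification of the upstairs contribution. You take $X = \sqcup_{\{d_i\}} i_{d_i}(\mathrm{Fl})$, so for a fixed point $x \in W_k \subset X$ the conormal $N^{\vee}_{\{W_k\}/X}$ is $T^*_x\mathrm{Fl}$, \emph{not} $T^*_x Quot$; smoothness of the Quot scheme is irrelevant here because $X$ is the union of flag varieties, not the Quot scheme itself. Meanwhile the class $\alpha$ from \Cref{function-defined} is $\frac{1}{\lambda_{-1}(N^*_{\mathrm{Fl}/Quot})}$, the inverse $\lambda_{-1}$ of the conormal of $\mathrm{Fl}$ in the Quot scheme, so $\iota^*\alpha|_x = \frac{1}{\lambda_{-1}(N^*_{\mathrm{Fl}/Quot}|_x)}$. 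The residue formula therefore gives
\[
\frac{\iota^*\alpha}{\lambda_{-1}N^{\vee}_{\{W_k\}/X}} \;=\; \frac{1}{\lambda_{-1}(N^*_{\mathrm{Fl}/Quot}|_x)\cdot \lambda_{-1}(T^*_x\mathrm{Fl})},
\]
and the step you are missing is the Whitney formula applied to the short exact sequence
\[
0 \to T_x\mathrm{Fl} \to T_x Quot \to N_{\mathrm{Fl}/Quot}|_x \to 0,
\]
which identifies that product with $\lambda_{-1}(T^*_x Quot)$. This is exactly how the paper proceeds. Your remark that $\iota^*\alpha$ is ``absorbed into the fixed-point contribution'' gestures in the right direction but does not substitute for this step, and your stated obstacle (ensuring $W_k$ is a reduced point so that the conormal ``is the full $T^*_x Quot$'') misdiagnoses where the Quot tangent space actually enters the computation.
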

    \begin{remark}
        Note that by \Cref{Corollary2.5},  $\lambda_{-1}T_{x}^*Quot$ depends on non-negative integers $\{d_i\}$, where $\sum_{i=1}^r d_i = d$. 
    \end{remark}
    \begin{proof}
    To prove the corollary, we apply the correspondence of residues to the morphism $\rho$ from \Cref{function-defined}. We define a trivial $\mathbb{C}_q^*$-action on $G(r,n)$, so $T \times \mathbb{C}_q^*$ acts on both schemes involved in $\rho$. Consider the diagram:
    \begin{equation*}
    \begin{tikzcd} 
    \rho^{-1}(\langle e_{i_1}, \dots, e_{i_r}\rangle) 
        \ar[r, "\iota"] 
        \ar[d, "f"] 
    &  \displaystyle \bigsqcup_{\{d_i\}} i_{d_i}(\mathrm{Fl}) 
        \ar[d, "\rho"] \\
    \{\langle e_{i_1}, \dots, e_{i_r}\rangle\} 
        \ar[r, "\iota'"]
    & G(r,n)
\end{tikzcd}
\end{equation*}
Applying \Cref{residue} yields: 
    
       \begin{equation}\label{4} 
\rho_*\frac{1}{\lambda_{-1}(N^*_{ \sqcup_{\{d_i\}} i_{d_i}(\mathrm{Fl})/Quot})}\arrowvert_{\langle e_{i_1}, \dots,e_{i_r} \rangle} = \lambda_{-1}T^*_{\langle e_{i_1}, \dots,e_{i_r} \rangle}G(r,n) \sum_{x \in  (\sqcup_{\{d_i\}} i_{d_i}(\mathrm{Fl}))^T,\rho(x)=\langle e_{i_1}, \dots,e_{i_r} \rangle}\frac{\frac{1}{\lambda_{-1}(N^*_{ \mathrm{Fl}/Quot})}\arrowvert_{x}}{\lambda_{-1}(T_{x}^* \mathrm{Fl})}.
\end{equation}
By \Cref{Relative fixed-points}, we obtain $(\sqcup_{\{d_i\}} i_{d_i}(\mathrm{Fl}))^T = Quot_{\mathbb{P}^1,d}(\mathbb{C}^n,r)^{T \times \mathbb{C}_q^*}_0$. Let $x$ be a fixed point in the summand, and let $\mathrm{Fl}$ denote the flag variety containing $x$. Let $j: \mathrm{Fl} \hookrightarrow Quot_{\mathbb{P}^1,d}(\mathbb{C}^n,r)$ denote the inclusion. We have the short exact sequence on $\mathrm{Fl}$: 
\[
0 \to T{\mathrm{Fl}} \to j^*T{Quot_{\mathbb{P}^1,d}(\mathbb{C}^n,r)} \to N_{ \mathrm{Fl}/{Quot_{\mathbb{P}^1,d}(\mathbb{C}^n,r)}} \to 0. 
\]
Taking the fiber at $x$, we get:
\[
0 \to T_x \mathrm{Fl} \to T_x{Quot_{\mathbb{P}^1,d}(\mathbb{C}^n,r)} \to N_{{\mathrm{Fl}}/{Quot_{\mathbb{P}^1,d}(\mathbb{C}^n,r)}}\arrowvert_{x} \to 0.
\]
By the Whitney formula, this gives:
\begin{equation} \label{5}
\lambda_{-1}((N_{{\mathrm{Fl}}/{Quot_{\mathbb{P}^1,d}(\mathbb{C}^n,r)}})\arrowvert_x) \cdot \lambda_{-1}(T_x{\mathrm{Fl}}) = \lambda_{-1}(T_x{Quot_{\mathbb{P}^1,d}(\mathbb{C}^n,r)}).
\end{equation}
Now by \Cref{function-defined} and substituting \eqref{5} in \eqref{4}, we obtain:
\begin{align*}
J_d\arrowvert_{\langle e_{i_1}, \dots,e_{i_r} \rangle}
&= \lambda_{-1}T^*_{\langle e_{i_1},\dots,e_{i_r}\rangle}G(r,n)
\sum_{\substack{x \in Quot_{\mathbb{P}^1,d}(\mathbb{C}^n,r)_{0}^{T \times \mathbb{C}_q^*}\\ \rho(x) = \langle e_{i_1},\dots,e_{i_r} \rangle}}
\frac{1}{\lambda_{-1}(T^*_x{Quot_{\mathbb{P}^1,d}(\mathbb{C}^n,r)})}.
\end{align*}

\end{proof}
\begin{remark}\label{Remark}
    The $T \times \mathbb{C}_q^*$-fixed points $x$ of the Quot scheme with  $\rho(x) = \langle e_{i_1},\dots,e_{i_r} \rangle$ correspond exactly to those described in \eqref{E:quotfixedpt}, where $\delta_i \ne 0$ only for $i \in \{i_1,\dots,i_r\}$.

\end{remark}

\subsection{{\bf $\lambda_y$-balanced $K$-theoretic class} of $J$-function} \label{lambda calculation}
Consider the collection of $T \times \mathbb{C}_q^*$-modules:
\[
\Bigl\{T^*_x Quot_{\mathbb{P}^1,d}(\mathbb{C}^r,r) : \text{$d \in \mathbb{Z}$, $x$ is a $T \times \mathbb{C}_q^*$-fixed point supported at zero}\Bigr \}.
\]
his collection will be used to describe the $U$-good property of the $J$-function coefficients. More precisely, the set $U$ is defined as:
\[
U = \Bigl\{\frac{1}{\lambda_{-1}T^*_xQuot_{\mathbb{P}^1,d}(\mathbb{C}^r,r)}: \text{$d \in \mathbb{Z}$, $x$ is a $T \times \mathbb{C}_q^*$-fixed point of supported at zero}\Bigr\}.
\]
In this section, we compute the $\lambda_y$-balanced $K$-theoretic class of the $J$-function restricted to each fixed point of $G(r,n)$, corresponding to the set $U$. We begin with the following lemma:
\begin{lemma}
    Each coefficient of the $J$-function, i.e., $J_d$, is a $U-good$ class(see \Cref{balancing definitions}).
\end{lemma}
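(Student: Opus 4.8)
The plan is to verify the condition of \Cref{balancing definitions} directly: for every $T$-fixed point $x_0 = \langle e_{i_1},\dots,e_{i_r}\rangle$ of $G(r,n)$ I must show that $J_d\arrowvert_{x_0}$ is a rational function in $\mathbb{N}\langle U\rangle$, where $U$ is the set declared at the start of \Cref{lambda calculation}. First I would invoke \Cref{Khaste} to write
\[
J_d\arrowvert_{x_0} = \lambda_{-1}T^*_{x_0}G(r,n)\sum_{\substack{x\in Quot_{\mathbb{P}^1,d}(\mathbb{C}^n,r)^{T\times\mathbb{C}_q^*}_0 \\ \rho(x)=x_0}}\frac{1}{\lambda_{-1}T^*_xQuot}.
\]
By \Cref{Relative fixed-points} this is a \emph{finite} sum, and by \Cref{Remark} the fixed points $x$ occurring are exactly those with $\delta_i\ne 0$ precisely for $i\in S:=\{i_1,\dots,i_r\}$. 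Since a finite sum of elements of $U$ (each with coefficient $1$) lies in $\mathbb{N}\langle U\rangle$, it suffices to show that each individual summand $\lambda_{-1}T^*_{x_0}G(r,n)\big/\lambda_{-1}T^*_xQuot$ belongs to $U$.

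The core computation is a single cancellation. Setting $y=-1$ in \Cref{Corollary2.5} I would write $\lambda_{-1}T^*_xQuot = A\,B\,C$, where $A=\prod_{i\in S}\prod_{j\notin S}\prod_{m=0}^{d_i}(1-\tfrac{t_i}{t_j}q^m)$, $B=\prod_{i\in S}\prod_{m=1}^{d_i}(1-q^m)$, and $C=\prod_{i\ne j\in S}\prod_{m=-d_i}^{-d_i+d_j-1}(1-\tfrac{t_i}{t_j}q^{-m})$. The key observation is that the $m=0$ part of $A$ is exactly $\prod_{i\in S}\prod_{j\notin S}(1-\tfrac{t_i}{t_j}) = \lambda_{-1}T^*_{x_0}G(r,n)$, and that this is the only place among the factors of $A$, $B$, $C$ where the pure $G(r,n)$-cotangent weights $\tfrac{t_i}{t_j}$ ($i\in S$, $j\notin S$, no $q$-factor) appear. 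Cancelling, each summand becomes
\[
\frac{\lambda_{-1}T^*_{x_0}G(r,n)}{\lambda_{-1}T^*_xQuot} = \frac{1}{\displaystyle\Big(\prod_{i\in S}\prod_{j\notin S}\prod_{m=1}^{d_i}(1-\tfrac{t_i}{t_j}q^m)\Big)\,B\,C},
\]
which recovers precisely the explicit product formula for $I_d(t;q)$.

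Finally I would identify each surviving denominator factor $1-\chi$ — with $\chi$ running over $\tfrac{t_i}{t_j}q^m$ ($i\in S$, $j\notin S$, $1\le m\le d_i$), over $q^m$ ($1\le m\le d_i$), and over $\tfrac{t_i}{t_j}q^{-m}$ ($i\ne j$ in $S$, in the indicated range) — as $\lambda_{-1}$ of a one-dimensional $T\times\mathbb{C}_q^*$-weight space occurring in the cotangent space of a Quot scheme in the defining collection, i.e.\ as one of the generators $\lambda_{-1}V_i$ of $U$. Since each such $\chi\ne 1$ (the $t_i$ are distinct characters and $q$ is formal, so none of these monomials is trivial), the product $\prod(1-\chi)^{-1}$ equals $1/\lambda_{-1}W_x$ for a genuine module $W_x$, and therefore lies in $U$. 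Summing over the finitely many $x$ gives $J_d\arrowvert_{x_0}\in\mathbb{N}\langle U\rangle$, proving the claim.

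The step I expect to require the most care is the clean cancellation in the middle paragraph: using the weight list of \Cref{fixed-point lemma} I must confirm that dividing by $\lambda_{-1}T^*_{x_0}G(r,n)$ leaves no factor in the numerator and does not collapse any denominator factor to $1-1=0$, so that the result is genuinely of the form $1/\lambda_{-1}(\,\cdot\,)$ with all weights drawn from the declared collection. Once this bookkeeping with the explicit weight decomposition is confirmed, the membership in $\mathbb{N}\langle U\rangle$ is immediate.
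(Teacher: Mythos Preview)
Your proposal is correct and follows essentially the same route as the paper: invoke \Cref{Khaste} to express $J_d\arrowvert_{x_0}$ as a finite sum over Quot-scheme fixed points, set $y=-1$ in \Cref{Corollary2.5} to compute each $\lambda_{-1}T^*_xQuot$, cancel the $m=0$ factors of $A$ against $\lambda_{-1}T^*_{x_0}G(r,n)$, and conclude that each summand is of the required form. The paper's proof is terser---it simply records the resulting product formula for $ABC$ and asserts membership in $\mathbb{N}\langle U\rangle$---whereas you spell out the cancellation and the identification of the surviving factors explicitly, but the argument is the same.
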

\begin{proof}
    By \Cref{Khaste} and \Cref{Corollary2.5}, we obtain:
    \[
{J_d^{G(r,n),K}}\arrowvert_{\langle e_{i_1},\dots,e_{i_r}\rangle} = \sum_{d_{i_1}+\dots+d_{i_r}=d}ABC,
\]
where:
\[
A = \prod_{i \in \{i_1,\dots,i_r\}} \prod_{j \notin \{i_1,\dots,i_r\}} \prod_{m=1}^{d_i} \frac{1}{(1 - \frac{t_i}{t_j} q^{m})},
\]
\[
B = \prod_{i \in \{i_1,\dots,i_r\}}\prod_{m=1}^{d_i}\frac{1}{(1-q^{m})},
\]
\[
C = \prod_{i,j \in \{i_1,\dots,i_r\}}^{i \ne j}\prod_{m=-d_i}^{-d_i+d_j-1}\frac{1}{(1-{\frac{t_i}{t_j}}q^{-m})}.
\]
 Therefore, ${J_d^{G(r,n),K}}\arrowvert_{\langle e_{i_1},\dots,e_{i_r}\rangle}$ is a rational function in $\mathbb{N}\langle U \rangle$.
\end{proof}

Since ${J_d^{G(r,n),K}}\arrowvert_{\langle e_{i_1},\dots,e_{i_r}\rangle}$ is a U-good class, we can find the  $\lambda_y$-balanced $K$-theoretic class of ${J_d^{G(r,n),K}}\arrowvert_{\langle e_{i_1},\dots,e_{i_r}\rangle}$ at each fixed point:
\begin{align} \label{Mohem}
\mathcal{B}_y\!\left({J_d^{G(r,n),K}}\!\arrowvert_{\langle e_{i_1},\dots,e_{i_r}\rangle}\right)
&= \sum_{d_{i_1}+\dots+d_{i_r}=d}
\mathcal{B}_y(A)\,\mathcal{B}_y(B)\,\mathcal{B}_y(C),
\end{align}
where:
\[
\mathcal{B}_y(A) = \prod_{i \in \{i_1,\dots,i_r\}} \prod_{j \notin \{i_1,\dots,i_r\}}\prod_{m=1}^{d_i} \frac{(1+y{\frac{t_i}{t_j}}q^{m})}{(1-{\frac{t_i}{t_j}}q^{m})} ,
\]
\[
\mathcal{B}_y(B) = \prod_{i \in \{i_1,\dots,i_r\}}\prod_{m=1}^{d_i}\frac{(1+yq^{m})}{(1-q^{m})},
\]
\[
\mathcal{B}_y(C) = \prod_{i,j \in \{i_1,\dots,i_r\}}^{i \ne j}\prod_{m=-d_i}^{-d_i+d_j-1}\frac{(1+y{\frac{t_i}{t_j}}q^{-m})}{(1-{\frac{t_i}{t_j}}q^{-m})}.
\]
Moreover, since ${J^{G(r,n),K}}\arrowvert_{\langle e_{i_1},\dots,e_{i_r}\rangle} = \sum_{d \geq 0}Q^d{J_d^{G(r,n),K}}\arrowvert_{\langle e_{i_1},\dots,e_{i_r}\rangle}$, we obtain:
\[
\mathcal{B}_y({J^{G(r,n),K}}\arrowvert_{\langle e_{i_1}, \dots,e_{i_r}\rangle}) = \sum_{d \geq    0}Q^d\mathcal{B}_y({J_d^{G(r,n),K}}\arrowvert_{\langle e_{i_1}, \dots,e_{i_r}\rangle}),
\]
where the coefficient of $Q^d$ is given by \eqref{Mohem}. We will use this power series in \Cref{section 7}.

\section{Cotangent bundle of \texorpdfstring{$G(r,n)$}{Gr(r,n)}}
Starting in this section, we turn to the definition of quasimaps to the cotangent bundle of $G(r,n)$. The cotangent bundle of $G(r,n)$ is a special case of a more general concept known as Nakajima quiver varieties. We briefly recall the definition of the cotangent bundle of $G(r,n)$. For the general definition of Nakajima quiver varieties, we refer the reader to \cite{MR3644099}.

Let $Q$ be a quiver with two vertices and one directed arrow:
\vspace{0.5em}
\begin{center}
\begin{tikzpicture}[node distance=1.5cm]
    \node[draw, rectangle, minimum width=0.8cm, minimum height=0.8cm] (A) at (0, 0) {$\mathbb{C}^n$};  
    \node[draw, circle, minimum size=0.8cm] (B) at (0, -2) {$\mathbb{C}^r$};  

    \draw[->] (B) -- (A) node[midway, right] {};  
\end{tikzpicture}
\end{center}
\vspace{0.5em}
A representation of the quiver $Q$, denoted $\mathrm{Rep}(Q,\mathbb{C}^r,\mathbb{C}^n)$, is the $GL_r(\mathbb{C})$-representation given by:
\[
\mathrm{Hom}(\mathbb{C}^r,\mathbb{C}^n)
\]
and its cotangent bundle is:
\[
T^*\mathrm{Rep}(Q,\mathbb{C}^r,\mathbb{C}^n) = \mathrm{Hom}(\mathbb{C}^r,\mathbb{C}^n) \oplus \mathrm{Hom}(\mathbb{C}^n,\mathbb{C}^r).
\]
 Define the moment map as follows:
 \[
 \mu: \mathrm{Hom}(\mathbb{C}^r, \mathbb{C}^n) \oplus \mathrm{Hom}(\mathbb{C}^n,\mathbb{C}^r) \to \mathfrak{gl}(\mathbb{C}^r)^* \cong \mathfrak{gl}(\mathbb{C}^r)
 \]
 \[
 \mu(A,B) \mapsto BA
 \]
  where the isomorphism $\mathfrak{gl}(\mathbb{C}^r)^* \cong \mathfrak{gl}(\mathbb{C}^r)$ is given by the trace pairing: for  $X \in \mathfrak{gl}(\mathbb{C}^r)$,
  \[
  \phi_X: Y \mapsto \mathrm{Tr}(XY)
  \]
  with $Y \in \mathfrak{gl}(\mathbb{C}^r)$.
  
  The cotangent bundle of $G(r,n)$ is isomorphic to: 
\[
\mu^{-1}(0)^{ss}/GL_r(\mathbb{C})
\]
where $\mu^{-1}(0)^{ss}$ denotes the subset of $\mu^{-1}(0) \subset T^*\mathrm{Rep}(Q,\mathbb{C}^r,\mathbb{C}^n)$ consisting of pairs $(A,B)$ such that $A$ has full rank $r$:
\[
\mu^{-1}(0)^{ss} = \left\{
(A,B) \in \mathrm{Hom}(\mathbb{C}^r,\mathbb{C}^n) \oplus \mathrm{Hom}(\mathbb{C}^n,\mathbb{C}^r)
 \biggm| \begin{array}{l}
A \textrm{ is injective}\\
 \textrm{$BA=0$}
\end{array}
\right\}.
\]
See \cite[Example 10]{MR3644099}.
\section{Quasimaps to \texorpdfstring{$T^*G(r,n)$}{T*Gr(r,n)}}\label{section 7}
\subsection{Definition of stable quasimaps}
Recall that in \cref{sec:Quot scheme}, we introduced the Quot scheme as a compactification of the moduli space of maps to the Grassmannian. In this section, we focus on the moduli space of maps from $\mathbb{P}^1$ to the cotangent bundle of the Grassmannian. We then introduce a new moduli space, known as the space of stable quasimaps. Although this discussion can be extended to maps to GIT quotients (see \cite{CKM}), we focus on the cotangent bundle of the Grassmannian.

 Let $X=T^*G(r,n)$, and let $u: \mathbb{P}^1 \to X$ be a map. As discussed in the previous section, $T^*G(r,n)$ can be viewed as $\mu^{-1}(0)^{ss}/GL_r(\mathbb{C})$, which is an open subset of the quotient stack $[\mu^{-1}(0)/GL_r(\mathbb{C})]$. By considering the composition, we obtain a map to the quotient stack:
\[
u: \mathbb{P}^1 \to  [\mu^{-1}(0)/GL_r(\mathbb{C})].
\]
 By the definition of the quotient stack, any such map is equivalent to
 a vector bundle $\mathcal{P}$ on $\mathbb{P}^1$, together with a section $f$ of the associated vector bundle $\mathcal{P} \times_{GL_r(\mathbb{C})} \mu^{-1}(0)$. Since $\mu^{-1}(0) \subset \mathrm{Hom}(\mathbb{C}^r,\mathbb{C}^n) \oplus \mathrm{Hom}(\mathbb{C}^n,\mathbb{C}^r)$ and $GL_r(\mathbb{C})$ acts only on $\mathbb{C}^r$, the bundle $\mathcal{P}\times_{GL_r(\mathbb{C})}\mu^{-1}(0)$ gives rise to a rank-$r$ vector bundle $\mathcal{V}$ and a trivial rank-$n$ vector bundle $\mathcal{W}$ on $\mathbb{P}^1$. The section $f$ is a section of a bundle $\mathcal{H}om(\mathcal{V},\mathcal{W}) \oplus \mathcal{H}om(\mathcal{V},\mathcal{W})^{\vee}$ and satisfies $\mu = 0$. 
 \begin{definition}
     A quasimap $\mathbb{P}^1 \to \mathcal{X}:= [\mu^{-1}(0)/GL_r(\mathbb{C})]$ consists of the triple:
 \[
 \langle \mathcal{V},\mathcal{W},f \rangle
 \]
 where:
 \begin{itemize}
    \item $\mathcal{V}$ is a vector bundle  of rank $r$ over $\mathbb{P}^1$

    \item $\mathcal{W}$ is a trivial vector bundle of rank $n$ over $\mathbb{P}^1$. 

    \item A section $f \in H^0(\mathbb{P}^1, \mathcal{M} \oplus \hbar \mathcal{M}^{\vee})$, satisfying $\mu =0 $, where $\mathcal{M} =  \mathcal{H}om(\mathcal{V},\mathcal{W})$.
\end{itemize}
 \end{definition}
Here, $\hbar$ denotes twisting by the line bundle of weight $\hbar$. The degree of a quasimap $f$ is defined as the degree $\deg(\mathcal{V}) \in \mathbb{Z}$. We denote a quasimap by its section $f$ and the stack of all degree $d$ quasimaps from $\mathbb{P}^1$ to $X$ by $QMap^d(X)$. Define $QMap(X) = \bigsqcup_{d} QMap^d(X)$.

For each $p \in \mathbb{P}^1$, there exists an evaluation map:
\[
ev_p: QMap(X) \to \mathcal{X}=[\mu^{-1}(0)/GL_r(\mathbb{C})]
\]
given by:
\[
ev_p(f) := f(p).
\]
\begin{definition}
    Let $f$ be a quasimap. A point $p \in \mathbb{P}^1$ is called nonsingular if $f(p) \in X$. A quasimap $f$ is called stable if it is nonsingular at all but finitely many points of $\mathbb{P}^1$.
\end{definition}

\begin{definition}
Let $QMap_{p_1,\dots,p_r}^d(X)$ denote the stack parameterizing 
the data of degree $d$ quasimaps to $T^*G(r,n)$, nonsingular at $p_1,\dots,p_r$.
\end{definition}
As before, let $T = (\mathbb{C}^*)^n$ act as in \eqref{Torus action}, and let $\mathbb{C}_q^*$ act on $\mathbb{P}^1$ as in \eqref{action}. 
Let $\mathbb{C}^*_{\hbar}$ be a torus that acts on $T^*G(r,n)$ by scaling the cotangent direction with the weight $\hbar$. Define $A=T \times \mathbb{C}^*_{\hbar} \times \mathbb{C}_q^*$.

The $T \times \mathbb{C}^*_{\hbar}$-fixed points of $T^*G(r,n)$ correspond to the $\binom{n}{r}$ points, each corresponding to an $r$-dimensional coordinate subspace of $\mathbb{C}^n$. Let these fixed points be denoted by $p_i$ for $1 \leq i \leq {\binom{n}{r}}$.

\subsection{Fixed points of stable quasimap}
In this section, we describe the $A$-fixed points of $QM_{\infty}^d(T^*G(r,n))$, the space used to define the vertex function in Section \eqref{vertex function}. The following lemma can be found in \cite[Section 4.4]{PSZ}, but for the reader's convenience, we provide the details here. For simplicity of notation, we assume that $f(\infty) = p_1 = \langle e_1, \dots, e_r \rangle$; the proof will explain why this assumption is without loss of generality.
\begin{lemma}\label{lemma:qmap-fixed}
    The $A$-fixed points of $QM_{\infty}^d(T^*G(r,n))$ are given by the data $\langle\mathcal{V},\mathcal{W},f\rangle$, where:
    \begin{align*}
    &\mathcal{W} = \mathbb{C}_{t_1}\otimes\mathcal{O}_{\mathbb{P}^1} \oplus\dots\oplus \mathbb{C}_{t_n}\otimes \mathcal{O}_{\mathbb{P}^1}\\&
    \mathcal{V} = q^{d_1}\mathbb{C}_{t_1}\otimes\mathcal{O}_{\mathbb{P}^1}(-d_1) \oplus \dots \oplus q^{d_r}\mathbb{C}_{t_r}\otimes\mathcal{O}_{\mathbb{P}^1}(-d_r) \\&
    f = (x^{d_1},\dots,x^{d_r})\\&
    \sum_{i=1}^{r} d_i = d
\end{align*}
\end{lemma}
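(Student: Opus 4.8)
The plan is to show that $A$-invariance rigidifies the quasimap data $\langle\mathcal{V},\mathcal{W},f\rangle$ completely, forcing it into the stated normal form. A fixed point of the $A$-action on the moduli stack is a quasimap admitting an $A$-equivariant structure, so I would first equip $\mathcal{V}$ and $\mathcal{W}$ with lifts of the $A$-action and then require the section $f=(A,B)$ to be invariant. Since $\mathcal{W}=\mathbb{C}^n\otimes\mathcal{O}_{\mathbb{P}^1}$ is trivial with $T$ acting through the standard characters (and with $\mathbb{C}^*_\hbar$ acting trivially, as $\hbar$ scales only the cotangent direction, i.e.\ the $B$-part of $f$), its equivariant structure is forced to be $\mathcal{W}=\bigoplus_{j=1}^n\mathbb{C}_{t_j}\otimes\mathcal{O}_{\mathbb{P}^1}$, exactly the weight decomposition used in \Cref{Torussection}.

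For $\mathcal{V}$, I would invoke Grothendieck's splitting theorem to write $\mathcal{V}\cong\bigoplus_{i=1}^r\mathcal{O}_{\mathbb{P}^1}(-d_i)$; the degree of the quasimap records the splitting type via $\sum_i d_i=d$, and the existence of a generically injective map $\mathcal{V}\to\mathcal{W}$ into the trivial bundle forces each $d_i\ge 0$. Because the point is fixed, this splitting carries an $A$-equivariant structure, so each line summand acquires a weight $w_i$, that is, $\mathcal{V}=\bigoplus_i w_i\otimes\mathcal{O}_{\mathbb{P}^1}(-d_i)$. The weights $w_i$ are then pinned down by requiring $A\colon\mathcal{V}\to\mathcal{W}$ to be invariant: a nonzero component $A_{ji}$ is a section of $w_i^{-1}t_j\otimes\mathcal{O}_{\mathbb{P}^1}(d_i)$, and $\mathbb{C}^*_q$-invariance forces each such section to be a single monomial $x^a y^{d_i-a}$, i.e.\ a $q$-weight eigenvector. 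Exactly as in the Quot-scheme analysis of \Cref{Torussection}, nonsingularity at $\infty=[1:0]$ together with $f(\infty)=\langle e_1,\dots,e_r\rangle$ forces $a=d_i$, so the monomial is $x^{d_i}$, mapping the $i$-th summand isomorphically away from $0$ onto the $\mathbb{C}_{t_i}$-factor; matching $q$- and $T$-weights then yields $w_i=q^{d_i}t_i$, giving the claimed $\mathcal{V}=\bigoplus_i q^{d_i}\mathbb{C}_{t_i}\otimes\mathcal{O}_{\mathbb{P}^1}(-d_i)$ and $f=(x^{d_1},\dots,x^{d_r})$.

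The remaining point, and the one I expect to be the crux, is the vanishing of the cotangent component $B$. Here I would use that $B$ is a section of $\hbar\,\mathcal{H}om(\mathcal{W},\mathcal{V})=\hbar\bigoplus_{i,j}w_i t_j^{-1}\otimes\mathcal{O}_{\mathbb{P}^1}(-d_i)$, whose global sections vanish unless $d_i=0$; on the surviving components (where $d_i=0$, hence $w_i=t_i$) an invariant section would require the character $\hbar\,t_i t_j^{-1}$ to be trivial, which never holds since $\hbar,t_1,\dots,t_n$ are independent characters of $T\times\mathbb{C}^*_\hbar$. Hence $B=0$, and the moment-map equation $\mu=BA=0$ becomes automatic.

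Finally, to justify the normalization $f(\infty)=p_1$, I would observe that $ev_\infty$ is $A$-equivariant and that $\mathbb{C}^*_q$ fixes $\infty$, so $f(\infty)$ is a $T\times\mathbb{C}^*_\hbar$-fixed point of $T^*G(r,n)$, namely one of the coordinate subspaces $p_k$; permuting the coordinates of $\mathbb{C}^n$ (a symmetry of the whole setup) reduces to $p_1$, and the description for a general $p_k$ is obtained by replacing $\{1,\dots,r\}$ with the corresponding index set. The main obstacle is the equivariant bookkeeping in the vanishing step for $B$, where the cotangent structure of the Nakajima quiver variety genuinely enters through the $\hbar$-twist and must be reconciled with the moment-map constraint.
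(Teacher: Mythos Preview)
Your proposal is correct and follows essentially the same route as the paper: equivariant splitting of $\mathcal{W}$, reduction to the Quot-scheme fixed-point analysis of \Cref{Torussection} for $\mathcal{V}$ and the $A$-component of $f$, nonsingularity at $\infty$ to force the monomials $x^{d_i}$, and the $q$-weight match yielding the $q^{d_i}$-twists. Your handling of the vanishing of $B$ is in fact slightly more careful than the paper's, which asserts $H^0(\mathbb{P}^1,\mathcal{O}_{\mathbb{P}^1}(-d_i))=0$ for all $d_i\ge 0$ (false at $d_i=0$), whereas you correctly close that residual case with the $\hbar$-weight argument.
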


\begin{proof}
    
 Assume that $(\mathcal{V},\mathcal{W},f)$ 
is an $A$-fixed point of the degree $d$ stable quasimap space $QM_{\infty}^d(T^*G(r,n))$. Since $\mathcal{W}$ is trivial and $T=(\mathbb{C}^*)^n$ acts on it, we may write it equivariantly as:
\begin{equation}\label{first}
\mathcal{W}=\mathbb{C}_{t_1}\otimes\mathcal{O}_{\mathbb{P}^1} \oplus \dots \oplus \mathbb{C}_{t_n}\otimes\mathcal{O}_{\mathbb{P}^1}.
\end{equation}
Since $f$ is a fixed point, $f(\infty)$ must be a $T \times \mathbb{C}^*_{\hbar}$-fixed point of $T^*G(r,n)$ and by the assumption, $f(\infty) = p_1 = \langle e_1,\dots,e_r \rangle$. Since we have:
\[
f(\infty) = p_1 \in T^*G(r,n)=\left\{
(A,B) \in Hom(\mathbb{C}^r,\mathbb{C}^n) \oplus Hom(\mathbb{C}^n,\mathbb{C}^r)
 \biggm| \begin{array}{l}
A \textrm{ is injective,}\\
 \textrm{ $BA=0$}
\end{array}
\right\}/GL_r(\mathbb{C}),
\]
the component of $f$ in $Hom(\mathcal{V},\mathcal{W})$ must be an injection. Moreover, $\mathcal{V}$ must have weights $t_1,\dots,t_r$ because $p_1$ corresponds to the subspace spanned by $e_1,\dots,e_r$. Since the quasimap is fixed by $T$, we can apply the argument from \Cref{Torussection} for the Quot scheme. Hence, $\mathcal{V}$ is of the form:

\[
 \mathbb{C}_{t_1}\otimes\mathcal{O}_{\mathbb{P}^1}(-d_1) \oplus \dots \oplus \mathbb{C}_{t_r}\otimes\mathcal{O}_{\mathbb{P}^1}(-d_r) 
\]
and the component of $f$ in $Hom(\mathcal{V},\mathcal{W})$ is given by inclusions:
\begin{equation}\label{given maps}
\mathbb{C}_{t_i}\otimes\mathcal{O}_{\mathbb{P}^1}(-d_i) \to \mathbb{C}_{t_i}\otimes\mathcal{O}_{\mathbb{P}^1} \/, \quad 1 \leq i \leq r,
\end{equation}
which is induced by multiplication by a homogeneous polynomial $f_i$ of degree $d_i$, with $d_i \ge 0$ and $d_1+\dots+d_r=d$.

Moreover, since $d_i \geq 0$, the component $\hbar^{-1}Hom(\mathcal{W},\mathcal{V})$ is zero. This is because it is (non-equivariantly) a direct sum of terms of the form $Hom(\mathcal{O}_{\mathbb{P}^1} , \mathcal{O}_{\mathbb{P}^1}(-d_i)) \cong H^0(\mathbb{P}^1,\mathcal{O}_{\mathbb{P}^1}(-d_i))$, which is zero when $d_i \geq 0$.

Let $f=(f_1,\dots,f_r)$. Since $f$ must be $\mathbb{C}_q^*$-fixed, and again by the same argument as in Section \eqref{Torussection} for the Quot scheme, each $f_{i}$ must be a monomial in $x$ and $y$. 

Moreover, since $f(\infty) = p_1$, each component $f_i$ of $f$ must be nonzero at $\infty = [1:0]$, and the only homogeneous polynomial of degree $d_i$ nonzero at $[1:0]$ is $x^{d_i}$. So, we obtain:
\begin{equation}\label{second}
    f=(x^{d_1},\dots,x^{d_r}).
\end{equation}
Now, by the action of $\mathbb{C}_
q^*$ we have: $q\cdot x^{d_i} = q^{d_i}x^{d_i}$. Therefore, for this section to be fixed, we need to twist each $\mathbb{C}_{t_i}\otimes\mathcal{O}_{\mathbb{P}^1}(-d_i)$ by $q^{d_i}$. Therefore:
\begin{equation}\label{third}
    \mathcal{V} = q^{d_1}\mathbb{C}_{t_1}\otimes\mathcal{O}_{\mathbb{P}^1}(-d_1) \oplus \dots \oplus q^{d_r}\mathbb{C}_{t_r}\otimes\mathcal{O}_{\mathbb{P}^1}(-d_r).
\end{equation}

 \eqref{first}, \eqref{second}, and \eqref{third} complete the proof.
\end{proof}
\begin{remark}
    In the lemma, since $f=(x^{d_1},\dots,x^{d_r})$, the maps in \eqref{given maps} correspond explicitly to multiplication by $x^{d_i}$.
\end{remark}

\subsection{Bijection of fixed points of stable quasimaps and Quot scheme}
In this short section, we prove that there is a bijection between the $A=T \times \mathbb{C}^*_q \times \mathbb{C}^*_\hbar$-fixed points of degree $d$ stable quasimaps and $T \times \mathbb{C}_q^*$-fixed points of the Quot scheme, $Quot_{\mathbb{P}^1,d}(\mathbb{C}^n,r)$, supported at zero. Recall from \Cref{sec:Quot scheme} that this set is denoted $Quot_{\mathbb{P}^1,d}(\mathbb{C}^n,r)_{0}^{T \times \mathbb{C}_q^*}$. The following corollary describes this bijection explicitly:
\begin{Corollary}
    There exists an explicit bijection: 
\[\phi: QM_{\infty}^d(T^*G(r,n))^{A} \to Quot_{\mathbb{P}^1,d}(\mathbb{C}^n,r)_0^{T \times \mathbb{C}_q^*}
\]
given by:
\[
\phi(\langle \mathcal{V},\mathcal{W},f\rangle) = \mathbb{C}_{t_1}\otimes\mathcal{O}_{\mathbb{P}^1}(-d_1) \oplus \dots \oplus \mathbb{C}_{t_r}\otimes\mathcal{O}_{\mathbb{P}^1}(-d_r) \to \mathbb{C}_{t_1}\otimes\mathcal{O}_{\mathbb{P}^1} \oplus \dots \oplus \mathbb{C}_{t_n}\otimes\mathcal{O}_{\mathbb{P}^1},
\]
where $\langle \mathcal{V},\mathcal{W},f\rangle$ is a fixed point as in \Cref{lemma:qmap-fixed}, and the right hand side is given by \eqref{E:quotfixedpt}.

\end{Corollary}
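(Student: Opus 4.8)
The plan is to prove the bijection by comparing the two explicit classifications of fixed points already obtained, since both sides turn out to be parametrized by identical combinatorial data. On the quasimap side, \Cref{lemma:qmap-fixed} shows (after the harmless reduction $f(\infty)=p_1$) that every $A$-fixed point of $QM_{\infty}^d(T^*G(r,n))$ is determined by a size-$r$ index subset $\{i_1,\dots,i_r\}\subset\{1,\dots,n\}$, recording the fixed point $f(\infty)=\langle e_{i_1},\dots,e_{i_r}\rangle$ of $T^*G(r,n)$, together with degrees $d_{i_1},\dots,d_{i_r}\ge 0$ satisfying $\sum_j d_{i_j}=d$; the bundle $\mathcal{V}$, the section $f=(x^{d_{i_1}},\dots,x^{d_{i_r}})$, and the equivariant $q^{d_{i_j}}$-twists are then completely forced by this data. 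On the Quot side, \eqref{E:quotfixedpt} shows that a $T\times\mathbb{C}_q^*$-fixed point supported at zero is determined by the same data: the subset $\{i:\delta_i=1\}$ of size $r$ and non-negative degrees $d_i$ summing to $d$, with the inclusion of the $i$-th summand given by multiplication by $x^{d_i}$. So the two fixed-point sets carry literally the same parameters.

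With this dictionary in place, I would define $\phi$ exactly as in the statement: forget the $q^{d_{i_j}}$-twists on $\mathcal{V}$ and the dual (cotangent) component of $f$, and realize $\mathcal{V}=\bigoplus_j \mathbb{C}_{t_{i_j}}\otimes\mathcal{O}_{\mathbb{P}^1}(-d_{i_j})$ as a subsheaf of $\mathcal{W}=\mathbb{C}^n\otimes\mathcal{O}_{\mathbb{P}^1}$ through $f$. The key point making this legitimate is that, as already recorded in the proof of \Cref{lemma:qmap-fixed}, the $\hbar^{-1}\mathrm{Hom}(\mathcal{W},\mathcal{V})$-component of $f$ vanishes identically (each summand is $H^0(\mathbb{P}^1,\mathcal{O}_{\mathbb{P}^1}(-d_i))=0$ for $d_i\ge 0$), so that no information is lost by discarding the cotangent direction and the fixed-point data of $QM_{\infty}^d$ reduces to its $\mathrm{Hom}(\mathcal{V},\mathcal{W})$-part. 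I would then verify that the image is genuinely a Quot-scheme point supported at zero: since each component of $f$ is the monomial $x^{d_i}$, whose only vanishing locus is $\{0\}=[0:1]$, the quotient is locally free away from $0$, which is exactly the support-at-zero condition, and dually this is the same as $f$ being nonsingular at $\infty=[1:0]$, i.e.\ membership in $QM_{\infty}^d$. Thus $\phi$ indeed lands in $Quot_{\mathbb{P}^1,d}(\mathbb{C}^n,r)_0^{T\times\mathbb{C}_q^*}$.

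Finally I would construct the inverse by reversing the recipe: given a Quot-scheme fixed point $\bigoplus_i\delta_i\mathbb{C}_{t_i}\otimes\mathcal{O}_{\mathbb{P}^1}(-d_i)\hookrightarrow\mathbb{C}^n\otimes\mathcal{O}_{\mathbb{P}^1}$ of the form \eqref{E:quotfixedpt}, set $\mathcal{W}=\mathbb{C}^n\otimes\mathcal{O}_{\mathbb{P}^1}$, take $\mathcal{V}$ to be the kernel bundle with its $q^{d_i}$-twists restored (these are uniquely determined by the degrees and by $\mathbb{C}_q^*$-invariance of $f$, exactly as in \Cref{Torussection}), let $f$ be the inclusion $(x^{d_{i_1}},\dots,x^{d_{i_r}})$ in the $\mathrm{Hom}(\mathcal{V},\mathcal{W})$-direction and $0$ in the dual direction. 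One checks that this triple satisfies $\mu=0$ trivially (the $B$-component is zero), is $A$-fixed, and is nonsingular at $\infty$, hence defines a point of $QM_{\infty}^d(T^*G(r,n))^A$. Since both constructions merely add or remove the canonically determined $q$-twists and the zero cotangent component, they are mutually inverse, which proves the bijection. I expect the only genuinely delicate step to be the bookkeeping of the equivariant $q^{d_i}$-structures, so that $\phi$ respects the full $A$- versus $T\times\mathbb{C}_q^*$-equivariant data; everything else is a direct transcription of the two classifications, and the reduction to $f(\infty)=p_1$ is subsumed by the symmetric-group relabeling already invoked in \Cref{lemma:qmap-fixed}.
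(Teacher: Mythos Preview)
Your proposal is correct and follows essentially the same approach as the paper: both arguments identify the two fixed-point sets with the same combinatorial data (an index subset of size $r$ and nonnegative degrees summing to $d$) via \Cref{lemma:qmap-fixed} and \eqref{E:quotfixedpt}, and then observe that the map $\phi$ is reversible. Your write-up is in fact more thorough than the paper's, which simply says ``since the construction is reversible, this map is indeed a bijection''; your explicit description of the inverse and the verification that no information is lost when dropping the cotangent component and the $q^{d_i}$-twists are exactly the details implicit in that sentence.
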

\begin{proof}
From \Cref{lemma:qmap-fixed}, the $A$-fixed points of $QM_{\infty}^d(T^*G(r,n))$ correspond to triples $\langle\mathcal{W},\mathcal{V},f\rangle$, where:
    \begin{align*}
    &\mathcal{W} = \mathbb{C}_{t_1}\otimes\mathcal{O}_{\mathbb{P}^1} \oplus\dots\oplus \mathbb{C}_{t_n}\otimes\mathcal{O}_{\mathbb{P}^1},\\&
    \mathcal{V} = q^{d_1}\mathbb{C}_{t_1}\otimes\mathcal{O}_{\mathbb{P}^1}(-d_1) \oplus \dots \oplus q^{d_r}\mathbb{C}_{t_r}\otimes\mathcal{O}_{\mathbb{P}^1}(-d_r),\\&
    f = (x^{d_1},\dots,x^{d_r}).
\end{align*}

Under $\phi$, this fixed point maps to the point of $Quot_{\mathbb{P}^1,d}(\mathbb{C}^n,r)_{0}^{T \times \mathbb{C}_q^*}$ given by: 
    
    \begin{align*}
        \mathbb{C}_{t_1}\otimes\mathcal{O}_{\mathbb{P}^1}(-d_1) \oplus \dots \oplus \mathbb{C}_{t_r}\otimes\mathcal{O}_{\mathbb{P}^1}(-d_r) \to \mathbb{C}_{t_1}\otimes\mathcal{O}_{\mathbb{P}^1} \oplus \dots \oplus \mathbb{C}_{t_n}\otimes\mathcal{O}_{\mathbb{P}^1}
    \end{align*}
where the map $\mathbb{C}_{t_i}\otimes\mathcal{O}_{\mathbb{P}^1}(-d_i) \to \mathbb{C}_{t_i}\otimes\mathcal{O}_{\mathbb{P}^1}$ is given by multiplication by $x^{d_i}$ as described in \eqref{E:quotfixedpt}.
Since the construction is reversible, this map is indeed a bijection, as desired.
 
\end{proof}

\subsection{Vertex Function}\label{vertex function}
The vertex function, introduced by Okounkov \cite{MR3752463}, provides an equivariant count of quasimaps from $\mathbb{P}^1$ to $T^*G(r,n)$. In fact, this construction applies more generally to any Nakajima quiver variety.

It is known that the moduli space of degree $d$ stable quasimaps, nonsingular at $p \in \mathbb{P}^1$, denoted by $QM_p^d( X)$, carries a perfect obstruction theory \cite[Theorem 7.2.2]{CKM}. This implies the existence of a virtual tangent bundle $T_{\mathrm{vir}}$ and a virtual structure sheaf $\mathcal{O}^{\mathrm{vir}}_d$. 
In this theory, the fiber of the virtual tangent bundle at a fixed point $\langle\mathcal{V}, \mathcal{W}, f\rangle$ is given by:
\begin{equation}\label{Virtual tangent bundle}
{T_{\mathrm{vir},\langle\mathcal{V}, \mathcal{W},f\rangle}}
 = H^{\bullet}(T_{\langle \mathcal{V},\mathcal{W},f \rangle}^{1/2} \oplus \hbar(T_{\langle \mathcal{V},\mathcal{W},f \rangle}^{1/2})^{\vee})- T_{p}X,
\end{equation}
where
\[
T_{\langle \mathcal{V},\mathcal{W},f \rangle}^{1/2} = \mathcal{V}^{\vee} \otimes \mathcal{W} - \mathcal{V}^{\vee} \otimes \mathcal{V},
\]
and $H^{\bullet} = H^0 - H^1$ in $K$-theory.

 Okounkov also defined the {\bf symmetrized virtual structure} sheaf by:

\begin{align*}
  \hat{\mathcal{O}}^{{\mathrm{vir}}}_d =  \mathcal{O}^{\mathrm{vir}}_d \otimes \mathcal{K}_{\mathrm{vir}}^{\frac{1}{2}},
\end{align*}
 where $\mathcal{K}_{\mathrm{vir}}^{\frac{1}{2}}$ denotes the square root of the virtual canonical line bundle. For further details, see \cite{MR3752463}.
 
     At any point $\langle \mathcal{V}, \mathcal{W}, f \rangle,$ the virtual tangent bundle can be written as:
     \[
     T_{\mathrm{vir},\langle \mathcal{V},\mathcal{W},f \rangle} = \sum a_i - \sum b_i,
     \]
    where the $a_i$ correspond to the contributions from $H^0$, and the $b_i$ correspond to those from $H^1$ and $T_pX$ in ~\eqref{Virtual tangent bundle}. 
     
     The virtual canonical bundle at any point $\langle \mathcal{V},\mathcal{W},f \rangle$ is therefore given by: 
     \[
     K_{\mathrm{vir},\langle \mathcal{V},\mathcal{W},f \rangle}=(\mathrm{det}T_{\mathrm{vir},\langle \mathcal{V},\mathcal{W},f \rangle})^{-1} = \frac{\prod b_i}{\prod a_i}.
     \]
     Moreover,  note that 
     \[
     \lambda_{-1}T_{\mathrm{vir},\langle \mathcal{V},\mathcal{W},f \rangle}^* = \prod\frac{1 - a_i^{-1}}{1 - b_i^{-1}},
     \]
     so that: 
     \[
     \frac{{K_{\mathrm{vir},\langle \mathcal{V},\mathcal{W},f \rangle}}^{\frac{1}{2}}}{\lambda_{-1}T_{\mathrm{vir},\langle \mathcal{V},\mathcal{W},f \rangle}^*} = \prod \frac{b_i^{\frac{1}{2}}-b_i^{-\frac{1}{2}}}{a_i^{\frac{1}{2}}-a_i^{-\frac{1}{2}}}.
     \]
     This term appears in the localization calculation of classes involving $\mathcal{K}_{\mathrm{vir}}$. Okounkov's roof function is defined by:
     \[
     \hat{s}(x) = \frac{1}{x^{\frac{1}{2}}-x^{-\frac{1}{2}}},
     \]
     and satisfies the following properties: 
     \[
     \hat{s}(x+y) = \hat{s}(x)\hat{s}(y), \quad   \hat{s}(x-y)= \frac{\hat{s}(x)}{\hat{s}(y)}.
     \]
    This function naturally appears in localization formulas involving symmetrized virtual structure sheaves.
    
In \cite[Section 7.2]{MR3752463}, it is proved that there exists a pushforward map $ev_{{\infty},*}:K_{T \times \mathbb{C}^*_q \times \mathbb{C}^*_{\hbar}}(QM^d_{\infty}) \to K_{T \times \mathbb{C}^*_q \times \mathbb{C}^*_{\hbar}}(T^*G(r,n))_{loc}$, which allows us to define the quasimap vertex function:

\begin{definition} \label{vertex def}
    The {\bf quasimap vertex function} is defined by:
    \[
 \sum_{d \geq 0}ev_{{\infty}, *}(QM^d_{\infty}, \hat{\mathcal{O}}^{{\mathrm{vir}}}_d Q^d) \in K_{T \times \mathbb{C}^*_q \times \mathbb{C}^*_{\hbar}}(T^*G(r,n))_{loc}[[Q]].
    \]
\end{definition}

By the virtual localization theorem \cite{FK}, the pushforward can be computed as:
\begin{equation}\label{lastsection}
  ev_{{\infty},*}(QM^d_{\infty},\hat{\mathcal{O}}^{{\mathrm{vir}}}_d Q^d)= \sum_{i=1}^{\binom{n}{r}}\sum_{{\langle \mathcal{V}, \mathcal{W},f \rangle_i^d}}\frac{\hat{\mathcal{O}}^{\mathrm{vir}}_d \vert_{{\langle \mathcal{V}, \mathcal{W},f \rangle_i^d}}}{{{\lambda_{-1}T_{{\mathrm{vir}}}^*}}_{{\langle\mathcal{V}, \mathcal{W},f \rangle_i^d}}} [p_i] Q^d = \sum_{i=1}^{\binom{n}{r}}\sum_{{\langle \mathcal{V}, \mathcal{W},f \rangle_i^d}}\hat{s}(T^{\mathrm{vir}}_{{\langle\mathcal{V}, \mathcal{W},f \rangle_i^d}}) [p_i] Q^d,
\end{equation}
where $\langle \mathcal{V}, \mathcal{W},f \rangle_i^d$ denotes the degree $d$ quasimaps sending $\infty$ to the $i$-th fixed point $p_i$ of $T^*G(r,n)$, equivalently, those quasimaps for which $f(\infty) = p_i$.
 
The quasimap vertex function, as calculated in \cite[Proposition 4]{PSZ} using the localization theorem, is given as follows, where for simplicity we present it only for the fixed point corresponding to the first $r$ coordinate vectors $\langle e_{1},\dots,e_{r}\rangle$ :
    \begin{thm} [Pushkar-Smirnov-Zeitlin]
        \label{lemma:proj} Let $p$ be a torus fixed point in $T^*G(r,n)^{T \times \mathbb{C}_{\hbar}^*}$ corresponding to the first $r$ coordinate vectors $\langle e_1,\dots,e_r \rangle$. The coefficient of the vertex function at this fixed point is the following $q$-hypergeometric function:
\begin{align*}
    V_p(t;q,Q,\hbar) = \sum_{d_1,\dots,d_r \in \mathbb{Z}^{\geq 0}} Q^d q^{nd/2} \prod_{i,j=1}^r \{\frac{t_j}{t_i}\}_{d_i-d_j}^{-1}\prod_{i=1}^{r}\prod_{j=1}^{n}\{\frac{t_j}{t_i}\}_{d_i},
\end{align*}
where $d = \sum_{i=1}^r{d_i}$, $\{x\}_d = \frac{(\frac{\hbar}{x})_d}{(\frac{q}{x})_d}(-q^{\frac{1}{2}}\hbar^{\frac{-1}{2}})^d$, where $(x)_d = (1-x)\cdots(1-xq^{d-1})$. 
\end{thm}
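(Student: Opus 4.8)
The statement is a virtual localization computation, so the plan is to evaluate the right-hand side of the localization formula \eqref{lastsection} term by term over the $A$-fixed locus and then recognize the resulting product as the claimed $q$-hypergeometric series. By \Cref{lemma:qmap-fixed}, the fixed points $\langle \mathcal{V},\mathcal{W},f\rangle$ with $f(\infty)=\langle e_1,\dots,e_r\rangle$ are indexed precisely by tuples $(d_1,\dots,d_r)\in\mathbb{Z}_{\geq 0}^r$ with $\sum_i d_i = d$. Hence $V_p$ becomes $\sum_{(d_1,\dots,d_r)}\hat{s}(T_{\mathrm{vir},\langle\mathcal{V},\mathcal{W},f\rangle})\,Q^d$, and it suffices to compute $\hat{s}(T_{\mathrm{vir}})$ at each such point.

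First I would write out the half virtual tangent space explicitly. Using $T^{1/2} = \mathcal{V}^\vee\otimes\mathcal{W}-\mathcal{V}^\vee\otimes\mathcal{V}$ together with the explicit $\mathcal{V}=\bigoplus_i q^{d_i}\mathbb{C}_{t_i}\otimes\mathcal{O}_{\mathbb{P}^1}(-d_i)$ and $\mathcal{W}=\bigoplus_j\mathbb{C}_{t_j}\otimes\mathcal{O}_{\mathbb{P}^1}$ from \Cref{lemma:qmap-fixed}, each summand of $T^{1/2}$ and of its $\hbar$-twisted dual $\hbar(T^{1/2})^\vee$ is an equivariant line bundle $\tfrac{t_j}{t_i}q^{a}\mathcal{O}_{\mathbb{P}^1}(m)$. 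I would then take $H^\bullet=H^0-H^1$ of each such line bundle as a $\mathbb{C}_q^*$-module: for $m\geq 0$ the contribution is $H^0$, a sum of $m+1$ monomial weights in $q$, while for $m\leq -2$ it is $-H^1$, the weight ranges being determined exactly as in the cohomology computations of \Cref{fixed-point lemma}. The subtraction of $T_pX$ in \eqref{Virtual tangent bundle} removes the constant ($d_i=0$) part, which is what makes the surviving weights organize into finite Pochhammer-type ranges.

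Next I would apply $\hat{s}$, using its multiplicativity $\hat{s}(x+y)=\hat{s}(x)\hat{s}(y)$ and $\hat{s}(x-y)=\hat{s}(x)/\hat{s}(y)$ to convert the virtual sum $T_{\mathrm{vir}}$ into a finite product of factors $\hat{s}(\tfrac{t_j}{t_i}q^{k})$. Writing $\hat{s}(x)=(x^{1/2}-x^{-1/2})^{-1}$, each block $\prod_k\hat{s}(\tfrac{t_j}{t_i}q^{k})$ over a consecutive range of exponents telescopes into a ratio of two shifted $\phi$-products, i.e. exactly the Pochhammer ratio $(\hbar/x)_d/(q/x)_d$ appearing in $\{x\}_d$ (see \eqref{Pochammer Symbols}), with the $\hbar$ entering through the twisted-dual block. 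Collecting the leftover monomials $x^{\pm 1/2}$ from every $\hat{s}$ factor, together with the half-power coming from the symmetrization $\mathcal{K}_{\mathrm{vir}}^{1/2}$ in $\hat{\mathcal{O}}^{\mathrm{vir}}_d$, produces the overall prefactor $q^{nd/2}$ and the factors $(-q^{1/2}\hbar^{-1/2})^d$ folded into the definition of $\{x\}_d$. Matching the three groups of factors --- the $j\notin\{1,\dots,r\}$ terms, the $\mathcal{V}^\vee\otimes\mathcal{V}$ diagonal terms, and the off-diagonal $d_i-d_j$ terms --- against $\prod_{i,j}\{t_j/t_i\}_{d_i-d_j}^{-1}\prod_i\prod_j\{t_j/t_i\}_{d_i}$ completes the identification.

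The main obstacle is the bookkeeping of the half-integer powers of $q$ and $\hbar$. Keeping track of which line bundles contribute via $H^0$ versus $H^1$ (the latter arising from the negative-degree summands of $\mathcal{V}^\vee\otimes\mathcal{V}$ when $d_i<d_j$, and from the twisted-dual piece), and then reconciling the signs and square-root weights produced by $\mathcal{K}_{\mathrm{vir}}^{1/2}$ and the roof function, is precisely where the delicate part lies; this is the step that pins down the exact normalization $\{x\}_d=\frac{(\hbar/x)_d}{(q/x)_d}(-q^{1/2}\hbar^{-1/2})^d$ rather than merely its shape. Everything else is the routine, if lengthy, algebra of rewriting products of $\hat{s}$-values as $q$-Pochhammer symbols.
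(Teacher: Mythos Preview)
The paper does not supply its own proof of this theorem: it is quoted from \cite[Proposition 4]{PSZ}, and the only computation the paper performs is the special case $r=1$, $n=2$ in the final section. Your outline is the correct one and is essentially the same localization argument PSZ use (and that the paper reproduces for $T^*\mathbb{P}^1$): enumerate the $A$-fixed quasimaps via \Cref{lemma:qmap-fixed}, expand $T_{\mathrm{vir}}$ using \eqref{Virtual tangent bundle}, apply the roof function multiplicatively, and regroup into Pochhammer symbols. Your identification of the half-integer bookkeeping as the only nontrivial step is accurate.
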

After absorbing the factor $(\frac{-q}{\hbar^{1/2}})^n$ into the quantum parameter $Q$, i.e., under the change of variables $Q \mapsto (\frac{-q}{\hbar ^{1/2}})^{n}Q$, the quasimap vertex function can be rewritten as:
\[
V_p(t;q,Q,\hbar) = \sum_{d_1,\dots,d_r \in \mathbb{Z}^{\geq 0}} Q^d \prod_{i,j=1}^r \{\frac{t_j}{t_i}\}_{d_i-d_j}^{-1}\prod_{i=1}^{r}\prod_{j=1}^{n}\{\frac{t_j}{t_i}\}_{d_i},
\]
where, under this change, $\{x\}_d$ simplifies to $\frac{(\frac{\hbar}{x})_d}{(\frac{q}{x})_d}$.

\subsection{Main Result}\label{Main results}
In this section, we state and prove the main result of the paper. We demonstrate how the quasimap vertex function can be recovered from the $\lambda_y$-balanced $K$-theoretic class of the $I$-function of $G(r,n)$ introduced in \Cref{Balanced class}. We also introduce the $I$-function of $T^*G(r,n)$, defined in \cite{wen2019ktheoreticifunctionvthetamathbfg}, which serves as the analogue of the $J$-function in Gromov--Witten theory. Next, we define its twisted version and prove that the vertex function coincides with the twisted $I$-function of $T^*G(r,n)$. 

 We begin by proving the first result. For simplicity, in the following theorem, we assume that $p$ is the fixed point of both $G(r,n)$ and $T^*G(r,n)$ corresponding to the span of the first $r$ coordinate vectors, $\langle e_{1},\dots,e_{r}\rangle$.
 
\begin{thm} \label{main thm}
     After the change of variables $y=-q^{-1}\hbar$, we have: 
    \[
    V_p(t;q,Q,\hbar) = \mathcal{B}_{-q^{-1}\hbar}(I(t;q,Q)_{\arrowvert_{p}})
    \]

    where the coefficients of $Q^d$ in both functions are equal for every $d \geq 0$, namely,
    \[
    V_p^d(t;q,\hbar) = \mathcal{B}_{-q^{-1}\hbar}(I_d(t;q)_{\arrowvert_{p}}).
    \]
\end{thm}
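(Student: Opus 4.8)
The plan is to establish the termwise identity $V_p^d(t;q,\hbar) = \mathcal{B}_{-q^{-1}\hbar}(I_d(t;q)_{\arrowvert_{p}})$ for each fixed $d \ge 0$; the first displayed equality then follows by summing against $Q^d$. Both sides are already expanded as sums over tuples $(d_1,\dots,d_r)$ with $d_i \ge 0$ and $\sum d_i = d$ --- the vertex sum of \Cref{lemma:proj} and the balanced localization sum of \eqref{Mohem} --- and the two index sets coincide. It therefore suffices to match the expressions summand by summand, for one fixed tuple $(d_1,\dots,d_r)$.

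First I would recall from \Cref{Khaste} together with the $U$-good computation that $I_d\arrowvert_{p} = \sum_{\sum d_i = d} ABC$, so that, with $y = -q^{-1}\hbar$, $\mathcal{B}_{-q^{-1}\hbar}(I_d\arrowvert_{p}) = \sum_{\sum d_i = d}\mathcal{B}_y(A)\,\mathcal{B}_y(B)\,\mathcal{B}_y(C)$ exactly as written in \eqref{Mohem}. On the vertex side I would substitute $\hbar = -qy$ into $\{x\}_d = (\hbar/x)_d/(q/x)_d$ and expand each Pochhammer symbol as a finite product. Using $(a)_d = \prod_{m=0}^{d-1}(1-aq^m)$ and $\hbar=-qy$, the numerator becomes $(\hbar/x)_d = \prod_{m=1}^{d}(1+ (y/x)q^{m})$ while the denominator is $(q/x)_d = \prod_{m=1}^{d}(1 - q^{m}/x)$; taking $x = t_j/t_i$ turns $\{t_j/t_i\}_{d_i}$ into $\prod_{m=1}^{d_i}\frac{1+y(t_i/t_j)q^m}{1-(t_i/t_j)q^m}$, the exact shape of a balanced factor.

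The core of the proof is then a bookkeeping match of the three blocks. I would split the product $\prod_{i=1}^r\prod_{j=1}^n\{t_j/t_i\}_{d_i}$ over the ranges $j\notin\{1,\dots,r\}$, $j=i$, and $j\in\{1,\dots,r\}\setminus\{i\}$: the first range reproduces $\mathcal{B}_y(A)$, the second (where $t_i/t_j=1$) reproduces $\mathcal{B}_y(B)$, and the third must be combined with the factor $\prod_{i\ne j}\{t_j/t_i\}_{d_i-d_j}^{-1}$ (the $i=j$ terms drop since $\{x\}_0=1$). For this last block I would invoke the identity $\frac{(a)_{d_i}}{(a)_{d_i-d_j}} = (a\,q^{d_i-d_j})_{d_j}$, immediate from $(a)_d=\phi(a)/\phi(aq^d)$ and valid for every sign of $d_i-d_j$; applying it to $a=\hbar/x$ and $a=q/x$ and substituting $\hbar=-qy$, $x=t_j/t_i$ yields precisely $\prod_{m=-d_i}^{-d_i+d_j-1}\frac{1+y(t_i/t_j)q^{-m}}{1-(t_i/t_j)q^{-m}}$, the $(i,j)$-factor of $\mathcal{B}_y(C)$.

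The main obstacle is this mixed block $j\in\{1,\dots,r\}\setminus\{i\}$: unlike the other two, $\mathcal{B}_y(C)$ involves the \emph{difference} $d_i-d_j$ and a shifted exponent window, so it cannot be read off a single Pochhammer symbol and emerges only after cancelling $\{t_j/t_i\}_{d_i}$ against $\{t_j/t_i\}_{d_i-d_j}^{-1}$ through the $\phi$-function ratio. One must then verify that the resulting range $d_i-d_j+1\le k\le d_i$ matches the range $-d_i\le m\le -d_i+d_j-1$ under $k=-m$, and carefully treat the degenerate cases $d_j=0$ (empty product) and $d_i<d_j$, where $(a)_{d_i-d_j}$ is interpreted via $\phi$. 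This is exactly the ``tedious algebraic check'' behind \Cref{thm:V=balI}, now organized geometrically: after $\hbar=-qy$ the vertex numerator $(\hbar/x)_d$ is precisely the $\lambda_y$-twist of the extra cotangent directions, so balancing the Quot-scheme localization terms reproduces the symmetrized virtual structure sheaf contribution on the quasimap side.
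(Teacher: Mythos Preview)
Your proposal is correct and follows essentially the same route as the paper: both split the vertex product $\prod_{i=1}^r\prod_{j=1}^n\{t_j/t_i\}_{d_i}$ into the three ranges $j=i$, $j\notin\{1,\dots,r\}$, and $j\in\{1,\dots,r\}\setminus\{i\}$, identify the first two with $\mathcal{B}_y(B)$ and $\mathcal{B}_y(A)$, and combine the third with $\prod_{i\ne j}\{t_j/t_i\}_{d_i-d_j}^{-1}$ to recover $\mathcal{B}_y(C)$. The only cosmetic difference is that you handle the mixed block uniformly via the $\phi$-identity $(a)_{d_i}/(a)_{d_i-d_j}=(aq^{d_i-d_j})_{d_j}$, whereas the paper does an explicit case split on the sign of $d_i-d_j$ before noting the two cases yield the same expression.
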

\begin{proof}
     In the expression for $V_p^d$, we have terms of the form $\prod_{i=1}^{r}\prod_{j=1}^{n}\{\frac{t_j}{t_i}\}_{d_i}$. We divide this term into two parts:

   \begin{itemize}
       \item The case $i=j$:
       
       In this case, we have $r$ terms of $\{1\}_{d_i}$ for $1 \leq i \leq r$. Since $\{1\}_{d_i} = \frac{(1-\hbar)\dots(1-\hbar q^{d_i-1})}{(1-q)\dots(1-q^{d_i})}$, we obtain:
       \[
       \frac{\prod_{i=1}^r \prod_{s=0}^{d_i-1}(1-\hbar q^s)}{\prod_{i=1}^r \prod_{s=1}^{d_i}(1-q^s)}.
       \]
       Now consider $\mathcal{B}_{y}(I(t;q,Q)_{\arrowvert_{p}})$, which contains the terms: 
       $\frac{\prod_{i=1}^r\prod_{s=1}^{d_i}(1+yq^{s})}{\prod_{i=1 }^r\prod_{s=1}^{d_i}(1-q^{s})}$.
       
       The corresponding term in $\mathcal{B}_{q^{-1}y}(I(t;q,Q)_{\arrowvert_{p}})$ will be:
       \[
       \frac{\prod_{i=1}^r\prod_{s=1}^{d_i}(1+yq^{s-1})}{\prod_{i=1 }^r\prod_{s=1}^{d_i}(1-q^{s})} = \frac{\prod_{i=1}^r\prod_{s=0}^{d_i-1}(1+yq^{s})}{\prod_{i=1 }^r\prod_{s=1}^{d_i}(1-q^{s})}.
       \]
       
       Hence, after the change of variable $y=-\hbar$, the terms are equal in $V_p^d$ and $\mathcal{B}_{-q^{-1}\hbar}(I(t;q,Q)_{\arrowvert_{p}})$.

       \item The case $1 \leq i \leq r$ and $r+1 \leq j \leq n$.
       
       These terms in $V_p^d$ are of the form: $\prod_{i=1}^{r}\prod_{j=r+1}^{n}\{\frac{t_j}{t_i}\}_{d_i}$, and we have:
       \[
       \{\frac{t_j}{t_i}\}_{d_i} = \frac{(1-\hbar \frac{t_i}{t_j})\dots(1- \hbar q^{d_i-1}\frac{t_i}{t_j})}{(1-q \frac{t_i}{t_j})\dots(1-q^{d_i}\frac{t_i}{t_j})}.
       \]
       Thus, the overall expression in $V_p^d$ becomes:
       \[
\prod_{i=1}^r \prod_{j=r+1}^{n} \frac{(1-\hbar \frac{t_i}{t_j})\dots(1- \hbar q^{d_i-1}\frac{t_i}{t_j})}{(1-q \frac{t_i}{t_j})\dots(1-q^{d_i}\frac{t_i}{t_j})}   
       \]

       Now, we examine $\mathcal{B}_{-q^{-1}\hbar}(I(t;q,Q)_{\arrowvert_{p}})$. In $\mathcal{B}_{y}(I(t;q,Q)_{\arrowvert_{p}})$, we have the terms: 
       \[
       \frac{\prod_{i=1}^r \prod_{j=r+1}^n\prod_{s=1}^{d_i}(1+yq^{s}\frac{t_i}{t_j})}{\prod_{i=1}^r \prod_{j=r+1}^n\prod_{s=1}^{d_i}(1-q^{s}\frac{t_i}{t_j})} 
       \]
       Hence, the corresponding term in $\mathcal{B}_{q^{-1}y}(I(t;q,Q)_{\arrowvert_{p}})$ will be:

       \[
       \frac{\prod_{i=1}^r \prod_{j=r+1}^n\prod_{s=1}^{d_i}(1+yq^{s-1}\frac{t_i}{t_j})}{\prod_{i=1}^r \prod_{j=r+1}^n\prod_{s=1}^{d_i}(1-q^{s}\frac{t_i}{t_j})} = \frac{\prod_{i=1}^r \prod_{j=r+1}^n\prod_{s=0}^{d_i-1}(1+yq^{s}\frac{t_i}{t_j})}{\prod_{i=1}^r \prod_{j=r+1}^n\prod_{s=1}^{d_i}(1-q^{s}\frac{t_i}{t_j})}
       \]
       Thus, after the change of variables $y=-\hbar$, the terms in $V_p^d$ and $\mathcal{B}_{-q^{-1}\hbar}(I(t;q,Q)_{\arrowvert_{p}})$ are also equal.

       \item Finally, in $V_p^d$, we are left with the terms: $\prod_{i,j=1}^r\{\frac{t_j}{t_i}\}_{d_i}$ where $1 \leq i \ne j \leq r$, and the terms $\prod_{i,j=1}^r\{\frac{t_j}{t_i}\}_{d_i-d_j}^{-1}$. 
       
       In $\prod_{i,j=1}^r\{\frac{t_j}{t_i}\}_{d_i-d_j}^{-1}$, when $i=j$, we have terms of the form $\{1\}_0$, which are equal to 1. So we can focus on the case when $i \ne j$. We then obtain the following:
       \[
\prod_{i,j=1}^r\{\frac{t_j}{t_i}\}_{d_i}\prod_{1 \le i,j \le r, i \ne j}\{\frac{t_j}{t_i}\}_{d_i-d_j}^{-1} = \prod_{1 \le i,j \le r, i \ne j}\{\frac{t_j}{t_i}\}_{d_i-d_j}^{-1}\{\frac{t_j}{t_i}\}_{d_i}.
       \]
       Now we calculate $\{\frac{t_j}{t_i}\}_{d_i-d_j}^{-1}.\{\frac{t_j}{t_i}\}_{d_i}$:
       
        1. If $d_i-d_j > 0:$

        \begin{align*}
           \{\frac{t_j}{t_i}\}_{d_i-d_j}^{-1}\{\frac{t_j}{t_i}\}_{d_i} &= \frac{\prod_{s=1}^{d_i-d_j}(1-q^s\frac{t_i}{t_j})}{\prod_{s=0}^{d_i-d_j-1}(1-\hbar q^s \frac{t_i}{t_j})} \cdot\frac{\prod_{s=0}^{d_i-1}(1-\hbar q^s\frac{t_i}{t_j})}{\prod_{s=1}^{d_i}(1-q^s \frac{t_i}{t_j})}  \\&= \frac{\prod_{s=d_i-d_j}^{d_i-1}(1-\hbar q^s \frac{t_i}{t_j})}{\prod_{s=d_i-d_j+1}^{d_i}(1-q^s \frac{t_i}{t_j})}
        \end{align*}
        2. If $d_i-d_j < 0:$

        \begin{align*}
            \{\frac{t_j}{t_i}\}_{d_i-d_j}^{-1}\{\frac{t_j}{t_i}\}_{d_i} &= \frac{\prod_{s=d_i-d_j}^{-1}(1-\hbar q^s  \frac{t_i}{t_j})}{\prod_{s=d_i-d_j+1}^0(1-q^s\frac{t_i}{t_j})} \cdot \frac{\prod_{s=0}^{d_i-1}(1- \hbar q^s \frac{t_i}{t_j})}{\prod_{s=1}^{d_i}(1-q^s\frac{t_i}{t_j})}  \\&= \frac{\prod_{s=d_i-d_j}^{d_i-1}(1-\hbar q^s  \frac{t_i}{t_j})}{\prod_{s=d_i-d_j+1}^{d_i}(1-q^s\frac{t_i}{t_j})}.
        \end{align*} 
       In both cases, the expressions are the same, so the total term is:
        \begin{align*}
            \prod_{1 \leq i,j \leq r}^{i \ne j}\frac{\prod_{s=d_i-d_j}^{d_i-1}(1-\hbar q^s \frac{t_i}{t_j})}{\prod_{s=d_i-d_j+1}^{d_i}(1-q^s\frac{t_i}{t_j})}
        \end{align*}
        
        We now turn our attention to $\mathcal{B}_{y}(I(t;q,Q)_{\arrowvert_{p}})$, where we are left with the following terms:
        \begin{align*}
        \frac{\prod_{1 \leq i,j \leq r}^{i \ne j}\prod_{s=-d_i}^{-d_i+d_j-1}(1+yq^{-s}\frac{t_i}{t_j})}{\prod_{1 \leq i,j \leq r}^{i \ne j}\prod_{s=-d_i}^{-d_i+d_j-1}(1-q^{-s}\frac{t_i}{t_j})}.
        \end{align*}
        This can be rewritten as:
        \begin{align*}
        \frac{\prod_{1 \leq i,j \leq r}^{i \ne j}\prod_{s=d_i-d_j+1}^{d_i}(1+yq^{s}\frac{t_i}{t_j})}{\prod_{1 \leq i,j \leq r}^{i \ne j}\prod_{s=d_i-d_j+1}^{d_i}(1-q^{s}\frac{t_i}{t_j})}.
        \end{align*}
        Therefore, the terms in $\mathcal{B}_{q^{-1}y}(I(t;q,Q)_{\arrowvert_{p}})$ are given by:
        \begin{align*}
        \frac{\prod_{1 \leq i,j \leq r}^{i \ne j}\prod_{s=d_i-d_j+1}^{d_i}(1+yq^{s-1}\frac{t_i}{t_j})}{\prod_{1 \leq i,j \leq r}^{i \ne j}\prod_{s=d_i-d_j+1}^{d_i}(1-q^{s-1}\frac{t_i}{t_j})},
        \end{align*}
            which are equal to terms:
            \begin{align*}
                \frac{\prod_{1 \leq i,j \leq r}^{i \ne j}\prod_{s=d_i-d_j}^{d_i-1}(1+yq^{s}\frac{t_i}{t_j})}{\prod_{1 \leq i,j \leq r}^{i \ne j}\prod_{s=d_i-d_j+1}^{d_i}(1-q^{s}\frac{t_i}{t_j})}.
            \end{align*}
            Thus, after the change of variables $y=-\hbar$, the terms in $V_p^d$ and $\mathcal{B}_{-q^{-1}\hbar}(I(t;q,Q)_{\arrowvert_{p}})$ are also equal.
    \end{itemize}
    \end{proof}

We now recall and define the $I$-function of $T^*G(r,n)$ from \cite{wen2019ktheoreticifunctionvthetamathbfg}, which serves as an analogue of the $J$-function in Gromov--Witten theory. We then prove that the quasimap vertex function of $T^*G(r,n)$ coincides with its $I$-function. 
\begin{definition}\label{I-function definition}
    The $I$-function of $T^*G(r,n)$ is:
    \[
I=\sum_{d \geq 0}Q^dev_*(\frac{\mathcal{O}_{F_0}^{\mathrm{vir}}}{\lambda_{-1}(N_{\mathrm{vir},F_0}^{*})}),
\]
where $F_0$ is a component of $\mathbb{C}^*$-fixed locus of the quasimap space $QM^d(T^*G(r,n))$, induced by the natural $\mathbb{C}^*$-action on $\mathbb{P}^1$, and consisting of degree $d$ quasimaps $f$ such that $f\arrowvert_{\mathbb{P}^1 \setminus \{0\}}$ is a constant map to $T^*(G(r,n))$. See also \cite{CKM, MR4167521} for cohomological $I$-function.
\end{definition}
\begin{definition}
    The twisted $I$-function of $T^*G(r,n)$ is defined by:
    \[
   I^{tw}= \sum_{d \geq 0}Q^dev_*(\frac{\hat{\mathcal{O}}^{{\mathrm{vir}}}_{F_0}}{\lambda_{-1}(N_{F_0}^{\mathrm{vir}})}),
    \]
    where we consider $F_0$ equipped with the action of $A=T \times \C_q^* \times \C_\hbar^*$, and replace the virtual structure sheaf in \Cref{I-function definition} with the symmetrized virtual structure sheaf.
\end{definition}

\begin{prop}\label{prop:V=I}
    The twisted $I$-function coincides with the quasimap vertex function.
\end{prop}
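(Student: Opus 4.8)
The plan is to deduce the identity from the virtual localization theorem applied to the $\mathbb{C}^*_q$-action on the domain $\mathbb{P}^1$, which is exactly the action already built into the definition of the vertex function in \Cref{vertex def}. Recall that the vertex function is $\sum_{d\ge 0}Q^d\,ev_{\infty,*}(\hat{\mathcal{O}}^{\mathrm{vir}}_d)$, a pushforward of the symmetrized virtual structure sheaf over the full space $QM^d_\infty$ of quasimaps nonsingular at $\infty$, whereas the twisted $I$-function is the analogous pushforward from the single locus $F_0$ of \Cref{I-function definition}. The first step is therefore to identify the $\mathbb{C}^*_q$-fixed locus of $QM^d_\infty$. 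A stable quasimap fixed by $\mathbb{C}^*_q$ must have all of its base points at the two $\mathbb{C}^*_q$-fixed points $0,\infty\in\mathbb{P}^1$, and away from its base points its underlying map to $T^*G(r,n)$ is $\mathbb{C}^*_q$-invariant, hence constant. Imposing nonsingularity at $\infty$ forces every base point to sit at $0$, so such a quasimap is constant on $\mathbb{P}^1\setminus\{0\}$; this is precisely the locus $F_0$. Thus $F_0$ is the \emph{entire} $\mathbb{C}^*_q$-fixed locus of $QM^d_\infty$, and there are no competing fixed components to keep track of.

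Granting this, the virtual localization theorem of \cite{FK} gives
\[
ev_{\infty,*}(\hat{\mathcal{O}}^{\mathrm{vir}}_d)
= ev_{\infty,*}\!\left(\frac{\hat{\mathcal{O}}^{\mathrm{vir}}_d\big|_{F_0}}{\lambda_{-1}(N^{\mathrm{vir},*}_{F_0})}\right),
\]
the sum over fixed components collapsing to the single term indexed by $F_0$. The right-hand side is, by construction, the $d$-th coefficient of $I^{tw}$, provided one checks that the restriction $\hat{\mathcal{O}}^{\mathrm{vir}}_d|_{F_0}$ agrees with the induced symmetrized virtual structure sheaf $\hat{\mathcal{O}}^{\mathrm{vir}}_{F_0}$ appearing in the definition of the twisted $I$-function. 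This is the content of the second step: writing the splitting of the virtual tangent bundle along $F_0$ into fixed and moving parts as $T^{\mathrm{vir}}_d|_{F_0}=T^{\mathrm{vir}}_{F_0}+N^{\mathrm{vir}}_{F_0}$, one has
\[
\mathcal{K}_{\mathrm{vir},d}\big|_{F_0}=\mathcal{K}_{\mathrm{vir},F_0}\otimes(\det N^{\mathrm{vir}}_{F_0})^{-1},
\]
so that taking square roots gives $\mathcal{K}^{1/2}_{\mathrm{vir},d}|_{F_0}=\mathcal{K}^{1/2}_{\mathrm{vir},F_0}\otimes(\det N^{\mathrm{vir}}_{F_0})^{-1/2}$. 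The extra determinantal twist is exactly the factor that converts the naive $\lambda_{-1}(N^{\mathrm{vir},*}_{F_0})$ into Okounkov's symmetrized $\hat{s}$-normalization recorded in \Cref{vertex function}, and matching it with the square root chosen on $F_0$ yields $\hat{\mathcal{O}}^{\mathrm{vir}}_d|_{F_0}/\lambda_{-1}(N^{\mathrm{vir},*}_{F_0})=\hat{\mathcal{O}}^{\mathrm{vir}}_{F_0}/\lambda_{-1}(N^{\mathrm{vir},*}_{F_0})$.

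I expect the compatibility of the square roots in the second step to be the main obstacle, since it requires the polarization inducing $\mathcal{K}^{1/2}_{\mathrm{vir}}$ on the ambient quasimap space to restrict to $F_0$ compatibly with the chosen half-canonical class there, up to the determinant of the moving bundle. A clean way to sidestep the global bundle subtleties is to verify the identity after restriction to each $T\times\mathbb{C}^*_\hbar$-fixed point $p_i$ of $T^*G(r,n)$: by the bijection of \Cref{lemma:qmap-fixed} between $A$-fixed quasimaps and the fixed points of the Quot scheme supported at $0$, both $V|_{p_i}$ and $I^{tw}|_{p_i}$ localize to sums indexed by the \emph{same} fixed quasimaps $\langle\mathcal{V},\mathcal{W},f\rangle$. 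On the vertex side, the explicit localization formula \eqref{lastsection} evaluates each contribution to $\hat{s}(T^{\mathrm{vir}}_{\langle\mathcal{V},\mathcal{W},f\rangle})$ via the identity $\mathcal{K}^{1/2}_{\mathrm{vir}}/\lambda_{-1}T^{\mathrm{vir},*}=\hat{s}(T^{\mathrm{vir}})$ established in \Cref{vertex function}; further localizing the $I^{tw}$ integrand from $F_0$ to the same fixed points produces the identical $\hat{s}$-contributions, whence the two functions agree term by term, completing the proof.
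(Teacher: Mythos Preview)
Your proposal is correct and follows essentially the same approach as the paper: both arguments reduce to the observation that the $A$-fixed points of $F_0$ coincide with those of $QM^d_\infty$, and that at each such point the localization contribution is $\hat{s}(T^{\mathrm{vir}}_{\langle\mathcal{V},\mathcal{W},f\rangle})$. The paper's proof is terser---it localizes the $I$-function integrand directly to $F_0^A$, combines $\lambda_{-1}(N^{*}_{\mathrm{vir},F_0,x})\cdot\lambda_{-1}(T^{*}_{\mathrm{vir},F_0,x})=\lambda_{-1}(T^{*}_{\mathrm{vir},x}QMap)$, and matches with \eqref{lastsection}---whereas you first isolate $F_0$ as the $\mathbb{C}^*_q$-fixed locus and then descend; the square-root compatibility you flag is not addressed separately in the paper, being absorbed into the same fixed-point comparison you use to sidestep it.
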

\begin{proof}
    For a fixed $d$, we compute $ev_*(\frac{\hat{\mathcal{O}}^{{\mathrm{vir}}}_{F_0}}{\lambda_{-1}(N_{\mathrm{vir},F_0}^{*})})$, using the localization theorem:
    \[
ev_*(\frac{\mathcal{O}_{F_0}^{\mathrm{vir}}}{\lambda_{-1}({N}_{\mathrm{vir},F_0}^{*})}) =\sum_{x \in F_0^{A}}  \frac{\frac{\hat{\mathcal{O}}^{{\mathrm{vir}}}_{F_0}}{\lambda_{-1}({N}_{\mathrm{vir},F_0}^{*})}\arrowvert_{x}}{\lambda_{-1}({T}_{\mathrm{vir},F_0,x}^{*})} = \sum_{x \in F_0^{A}}\frac{\hat{\mathcal{O}}^{{\mathrm{vir}}}_{F_0,x}}{\lambda_{-1}({N}_{\mathrm{vir},F_0,x}^{*}).\lambda_{-1}({T}_{\mathrm{vir},F_0,x}^{*})}
\]
\[
= \sum_{x \in F_0^{A}}\frac{\hat{\mathcal{O}}^{{\mathrm{vir}}}_{F_0,x}}{\lambda_{-1}(T^*_{\mathrm{vir},x}QMap)},
\]
where $F_0^{A}$ denotes $A$-fixed points of $F_0$, which can easily be seen to coincide with those described in \Cref{lemma:qmap-fixed}. Therefore, we can write the last equation as:
\[
\sum_{x \in (QM_{\infty}^d)^{A \times \mathbb{C}_{\hbar}^* \times \mathbb{C}_q^*}}\frac{\hat{\mathcal{O}}^{{\mathrm{vir}}}_{F_0,x}}{\lambda_{-1}(T^*_{\mathrm{vir},x}QMap)}
\]
which is equal to $ev_{{\infty},*}(QM^d_{\infty},\hat{\mathcal{O}}^{{\mathrm{vir}}})$.
\end{proof}

\section{Explicit Example for \texorpdfstring{$\mathbb{P}^1$}{P1}}
In this section, we explicitly compute the vertex function, $V_p(t;q,Q,\hbar)$, of $T^*\mathbb{P}^1$, and the $\lambda_y$ $K$-theoretic balanced $I$-function, $\mathcal{B}_{y}(I(t;q,Q)_{\arrowvert_{p}})$, of $\mathbb{P}^1$ at each fixed point, and we directly verify \cref{main thm}.

Recall that the quiver corresponding to $T^*\mathbb{P}^1$ is given by the following diagram:

\begin{center}
\begin{tikzpicture}[node distance=1.5cm]
    \node[draw, rectangle, minimum width=0.8cm, minimum height=0.8cm] (A) at (0, 0) {$\mathbb{C}^2$};  
    \node[draw, circle, minimum size=0.8cm] (B) at (0, -2) {$\mathbb{C}$};  

    \draw[->] (B) -- (A) node[midway, right] {};  
\end{tikzpicture}
\end{center}

 The associated moment map is: 
\begin{align*}
   \mu:T^*\mathrm{Hom}(\mathbb{C},\mathbb{C}^2) = \mathrm{Hom}(&\mathbb{C},\mathbb{C}^2) \oplus \mathrm{Hom}(\mathbb{C}^2,\mathbb{C}) \to \mathfrak{gl}(\mathbb{C})=\mathbb{C}  \\& (X,Y) \mapsto YX.
\end{align*} 

Thus:
\[
T^*\mathbb{P}^1 = \big\{(X,Y) \in Hom(\mathbb{C},\mathbb{C}^2) \oplus Hom(\mathbb{C}^2,\mathbb{C}): \text{ $X$ is injective and $YX$=0}\}/{\mathbb{C}^*}\big\}.
\]

In this setting, the torus $T:=(\mathbb{C}^*)^2$ acts on $\mathbb{C}^2$ by $(t_1,t_2).(x,y)=(t_1x,t_2y)$, and  $\mathbb{C}_q^*$  acts on $\mathbb{P}^1$ via $q\cdot[x:y] = [qx:y]$. Additionally, the torus  $\mathbb{C}^*_{\hbar}$, which scales the cotangent direction by the character $\hbar$, acts on the cotangent bundle via $\hbar.(X,Y) = (X,\hbar Y)$.
The $T \times \mathbb{C}^*_{\hbar}$-fixed points of $T^*\mathbb{P}^1$ consist of two points, each corresponding to a one-dimensional subspace of $\mathbb{C}^2$ spanned by a coordinate vector. We denote these fixed points by $p_1$ and $p_2$, where $p_1$ corresponds to the line $\langle e_1\rangle$ and $p_2$ corresponds to $\langle e_2\rangle$.

The degree $d$ stable quasimap space $QM_{\infty}^d( T^*\mathbb{P}^1)$ is defined by triples $\langle \mathcal{V},\mathcal{W},f \rangle$, where:

\begin{itemize}
    \item   $\mathcal{V}$ is a degree $d$ line bundle over $\mathbb{P}^1$;
    \item   $\mathcal{W}$ is a trivial bundle of rank 2 over $\mathbb{P}^1$;
    \item   $f \in H^0(\mathbb{P}^1,\mathcal{M} \oplus \hbar \mathcal{M}^{\vee})$, where $\mathcal{M} = Hom(\mathcal{V}, \mathcal{W})$. 
\end{itemize}
Let $\langle\mathcal{V},\mathcal{W},f\rangle$ be a $T \times \mathbb{C}^*_{\hbar} \times \mathbb{C}^*_q$-fixed stable quasimap of degree $d$. By \cref{lemma:qmap-fixed}, we have:
\begin{equation}\label{fixed quasimap}
    \mathcal{V} = q^d\mathbb{C}_{t_i}\otimes \mathcal{O}_{\mathbb{P}^1}(-d);
\end{equation}
\[
\mathcal{W} = \mathbb{C}_{t_1}\otimes \mathcal{O}_{\mathbb{P}^1} \oplus \mathbb{C}_{t_2}\otimes \mathcal{O}_{\mathbb{P}^1},
\]
for $i=1,2$.

  By \Cref{Virtual tangent bundle}, the virtual tangent bundle at this fixed point is given by: 
 \[
 T_{\mathrm{vir},\langle \mathcal{V},\mathcal{W},f \rangle} = H^{\bullet}(T^{1/2} \oplus \hbar(T^{1/2})^{\vee})- T_{p_i}X,
 \]
where 
\[
T^{1/2} = \mathcal{V}^{\vee} \otimes \mathcal{W} - \mathcal{V}^{\vee} \otimes \mathcal{V},
\]
and $H^{\bullet} = H^0 - H^1$.

Now, we compute $T^{\mathrm{vir}}_{\langle\mathcal{V},\mathcal{W},f\rangle}$ for $i=1$:
\begin{align*}
    T^{1/2} &= q^{-d}\mathbb{C}_{t_{1}^{-1}}\otimes\mathcal{O}_{\mathbb{P}^1}(d) \otimes [\mathbb{C}_{t_1}\otimes\mathcal{O}_{\mathbb{P}^1} \oplus \mathbb{C}_{t_2}\otimes\mathcal{O}_{\mathbb{P}^1}] - \mathcal{O}_{\mathbb{P}^1} \\& = q^{-d}\mathcal{O}_{\mathbb{P}^1}(d) \oplus q^{-d}\mathbb{C}_{\frac{t_2}{t_1}}\otimes\mathcal{O}_{\mathbb{P}^1}(d) - \mathcal{O}_{\mathbb{P}^1}
\end{align*}
Thus, we obtain:
\[
\scalebox{0.95}{$H^{\bullet}(T^{1/2} \oplus \hbar(T^{1/2})^{\vee}) = H^{\bullet}(q^{-d}\mathcal{O}(d) \oplus \hbar q^d \mathcal{O}(-d)) \oplus H^{\bullet}(q^{-d} \mathbb{C}_{\frac{t_2}{t_1}}\mathcal{O}(d) \oplus \hbar q^d \mathbb{C}_{\frac{t_1}{t_2}} \mathcal{O}(-d)) - H^{\bullet}(\mathcal{O} - \hbar\mathcal{O})$}.
\]
Note that the character of $H^0(\mathcal{O}(d))$ is:
\[
1+q+\dots+q^d.
\]
After computing the cohomology groups, the weight space decomposition of the virtual tangent bundle at $\langle\mathcal{V}, \mathcal{W},f\rangle$ is:
\begin{equation}\label{vertexequation}
T^{\mathrm{vir}}_{\langle\mathcal{V}, \mathcal{W},f\rangle} =(q^{-1}+\dots+q^{-d})+(\frac{t_2}{t_1}q^{-1}+\dots+\frac{t_2}{t_1}q^{-d})  \\ -(\hbar+\hbar q+\dots+\hbar q^{d-1})-(\hbar\frac{t_1}{t_2}+\hbar\frac{t_1}{t_2}q+\dots+\hbar\frac{t_1}{t_2}q^{d-1}).
\end{equation}
Recall from \Cref{vertex def} that the Okounkov's vertex function is: 
\[
V(Q) =  \sum_d Q^d ev{_{\infty}}_*(\hat{\mathcal{O}}^{\mathrm{vir}}_d),
\]

where $\hat{\mathcal{O}}_{\mathrm{vir}}^d = \mathcal{O}_{\mathrm{vir}}^d \otimes \mathcal{K}_{\mathrm{vir}}^{\frac{1}{2}}$ and $\mathcal{K}_{\mathrm{vir}} = det^{-1}T_{\mathrm{vir}}QM_{\infty}^d$.\\

By \Cref{lastsection}, we have:
\begin{align*}
    V_{p_1}(t;q,Q,\hbar) = \sum_{d=0}^{\infty}V_{p_1}^d(t;q,\hbar)Q^d= \sum_{d=0}^{\infty} \hat{s}(T_{{\mathrm{vir},\langle \mathcal{V},\mathcal{W},f \rangle}_d})Q^{d},
\end{align*}
where for each $d$, $\langle \mathcal{V},\mathcal{W},f \rangle_d$ denotes the unique stable quasimap of degree $d$ satisfying the condition $f(\infty) = p_1$, as described in \Cref{fixed quasimap}.

Now, by \Cref{vertexequation}, we can compute $\hat{s}(T_{{\mathrm{vir},\langle \mathcal{V},\mathcal{W},f \rangle}_d})$:
\begin{align*}
    &\scalebox{1}{$\hat{s}(q^{-1}+\dots+q^{-d}-(\hbar+\hbar q+\dots+\hbar q^{d-1})+(\frac{t_2}{t_1}q^{-1}+\dots+ \frac{t_2}{t_1}q^{-d})-(\hbar \frac{t_1}{t_2}+\hbar \frac{t_1}{t_2}q+\dots+\hbar \frac{t_1}{t_2}q^{d-1}))$} \\& =\frac{\hat{s}(q^{-1}+\dots+q^{-d})}{\hat{s}(\hbar+\hbar q+\dots+\hbar q^{d-1})}\cdot\frac{\hat{s}(\frac{t_2}{t_1}q^{-1}+\dots+\frac{t_2}{t_1}q^{-d})}{\hat{s}(\hbar \frac{t_1}{t_2}+\hbar \frac{t_1}{t_2}q+\dots+\hbar \frac{t_1}{t_2}q^{d-1}))}\\& =\frac{\hat{s}(q^{-1})\dots\hat{s}(q^{-d}))}{\hat{s}(\hbar)\dots\hat{s}(\hbar q^{d-1})}\cdot\frac{\hat{s}(\frac{t_2}{t_1}q^{-1})\dots\hat{s}(\frac{t_2}{t_1}q^{-d})}{\hat{s}(\hbar \frac{t_1}{t_2})\dots\hat{s}(\hbar \frac{t_1}{t_2}q^{d-1})} = \frac{(\hbar )_d}{(q)_d}\cdot\frac{(\hbar \frac{t_1}{t_2})_d}{(q\frac{t_1}{t_2})_d}\cdot(\frac{q^2}{\hbar})^d.
\end{align*}
Thus, we obtain:
\begin{align*}
    V_{p_1}(t;q,Q,\hbar) = \sum_{d=0}^{\infty}\frac{(\hbar)_d}{(q)_d}\cdot\frac{(\hbar\frac{t_1}{t_2})_d}{(q\frac{t_1}{t_2})_d}\cdot(\frac{q^2Q}{\hbar})^d.
\end{align*}

By changing variables $\frac{q^2Q}{\hbar} \to Q$, we get:
\begin{align*}
   V_{p_1}(t;q,Q,\hbar) = \sum_{d=0}^{\infty}\frac{(\hbar)_d}{(q)_d}.\frac{(\hbar\frac{t_1}{t_2})_d}{(q\frac{t_1}{t_2})_d}Q^d 
\end{align*}
Similarly,
\[
V_{p_2}(t;q,Q,\hbar) = \sum_{d=0}^{\infty}\frac{(\hbar)_d}{(q)_d}.\frac{(\hbar \frac{t_2}{t_1})_d}{(q\frac{t_2}{t_1})_d}Q^d
\]

\begin{remark}
We can also express the formula without using the Pochhammer symbol. Recall that the Pochhammer symbol is defined by: 
\[
(x)_d = (1-x)(1-xq)\dots(1-xq^{d-1}),
\]
and thus we can rewrite $V_{p_1}(z)$ as follows:
\begin{equation}\label{pochformula}
   V_{p_1}(t;q,Q,\hbar) = \sum_{d=0}^{\infty}\frac{(1-\hbar)(1-\hbar q)\dots(1-\hbar q^{d-1})}{(1-q)(1-q^2)\dots(1-q^d)}\cdot\frac{(1-\hbar\frac{t_1}{t_2})(1-\hbar\frac{t_1}{t_2}q)\dots(1-\hbar\frac{t_1}{t_2}q^{d-1})}{(1-\frac{t_1}{t_2}q)(1-\frac{t_1}{t_2}q^2)\dots(1-\frac{t_1}{t_2}q^d)} Q^d.
\end{equation}
\end{remark}

Now we compute $I(t;q,Q)_{\arrowvert_{p_i}} = \sum_{d \geq 0} I_d(t;q)_{\arrowvert_{p_i}}Q^d$. To do this, we localize $I_d(t;q)$ at the fixed points $p_i$, corresponding to the one-dimensional subspaces $<e_i>$. For simplicity assume $i=1$. 
The unique fixed point of degree $d$, supported at zero in the Quot scheme corresponding to $p_1$, is described by the injection: 
\[
0 \to \mathbb{C}_{t_1}\otimes \mathcal{O}_{\mathbb{P}^1}(-d) \to \mathbb{C}_{t_1}\otimes \mathcal{O}_{\mathbb{P}^1} \oplus \mathbb{C}_{t_2}\otimes\mathcal{O}_{\mathbb{P}^1}.
\]
This injection is given by the natural map  $\mathbb{C}_{t_1}\otimes \mathcal{O}_{\mathbb{P}^1}(-d) \to \mathbb{C}_{t_1}\otimes \mathcal{O}_{\mathbb{P}^1}$, defined by multiplication by $x^d$. 
We denote this fixed point by $a$. We then obtain the following expression for the tangent space of the Quot scheme at $a$:
\[
    T_aQuot_{\mathbb{P}^1,d}(\mathbb{C},1) = Hom(\mathbb{C}_{t_1}\otimes \mathcal{O}_{\mathbb{P}^1}(-d),\frac{\mathbb{C}_{t_1}\otimes \mathcal{O}_{\mathbb{P}^1} \oplus \mathbb{C}_{t_2}\otimes\mathcal{O}_{\mathbb{P}^1}}{\mathbb{C}_{t_1}\otimes\mathcal{O}_{\mathbb{P}^1}(-d)}).
\]
By applying \Cref{fixed-point lemma}, we obtain that the weight space decomposition for the dual of $Hom(\mathbb{C}_{t_1}\otimes \mathcal{O}_{\mathbb{P}^1}(-d),\frac{t_1\mathcal{O}_{\mathbb{P}^1}}{\mathbb{C}_{t_1}\otimes \mathcal{O}_{\mathbb{P}^1}(-d)})$ is:
\[
q+q^2+\dots+q^d.
\]
Similarly, the weight decomposition for the dual of $Hom(\mathbb{C}_{t_1}\otimes \mathcal{O}(-d),\mathbb{C}_{t_2}\otimes \mathcal{O})$ is:
\[
    \frac{t_1}{t_2}+\frac{t_1}{t_2}q+\dots+\frac{t_1}{t_2}q^d.
\]

Thus:
\[
I_d(t;q)_{\arrowvert_{p_1}}=\frac{1}{(1-q)(1-q^2)\dots(1-q^d)}\cdot\frac{1}{(1-\frac{t_1}{t_2}q)\dots(1-\frac{t_1}{t_2}q^d)}.
\]
therefore, we obtain:
\begin{align*}
 \mathcal{B}_{y}(I_d(t;q)_{\arrowvert_{p_1}})= \frac{(1+yq)(1+yq^2)\dots(1+yq^d)}{(1-q)(1-q^2)\dots(1-q^d)}.\frac{(1+y\frac{t_1}{t_2}q)\dots(1+y\frac{t_1}{t_2}q^d)}{(1-\frac{t_1}{t_2}q)\dots(1-\frac{t_1}{t_2}q^d)}.
\end{align*}
substituting $y=-q^{-1}\hbar$, we get:
\begin{align*}
\mathcal{B}_{-q^{-1}\hbar}(I_d(t;q)_{\arrowvert_{p_1}}) = \frac{(1-\hbar)(1-\hbar q)\dots(1-\hbar q^{d-1})}{(1-q)(1-q^2)\dots(1-q^d)}\cdot\frac{(1-\hbar \frac{t_1}{t_2})\dots(1-\hbar \frac{t_1}{t_2}q^{d-1})}{(1-\frac{t_1}{t_2}q)\dots(1-\frac{t_1}{t_2}q^d)}.
\end{align*}

This expression matches exactly $V^d_p(t;q,\hbar)$, the coefficient of $Q^d$ in the expansion of   \Cref{pochformula}. Similarly, we see that $\mathcal{B}_{-q^{-1}\hbar}(I_d(t;q,Q)_{\arrowvert_{p_2}}) = V_{p_2}(t;q,Q,\hbar)$.
\appendix
\section{}\label{Appendix A}
Let $(d_1,\dots,d_r) \in \mathbb{Z}_{\geq 0}^r$, and let $d = \max\{d_1,\dots,d_r\}$. Define the set $W_{(d_1,\dots,d_r)} \subset K_{T \times \mathbb{C}^*_{\lambda} \times \mathbb{C}^*_q}(\mathrm{pt})$ as follows:
\[
W_{(d_1,\dots,d_r)} = \big\{ q^k \frac{t_i}{t_j}\lambda^l : -d \leq k \leq d, 1 \leq i,j \leq n, l \in \{0,1\}\big\}.
\]
Set $W = \bigcup_{(d_1,\dots,d_r)}W_{(d_1,\dots,d_r)}$. For each element $q^k \frac{t_i}{t_j}\lambda^l \in W$, define a one-dimensional $T \times \mathbb{C}^*_{\lambda} \times \mathbb{C}^*_q$-module with that weight. We define the set $U$ by:
\[
U := \big\{\frac{1}{\prod_{i}\lambda_{-1}V_i}: \text{$V_i$ is a one-dimensional $T \times \mathbb{C}^*_{\lambda} \times \mathbb{C}^*_q$-module, whose weight is nontrivial element of $W$ } \big\}
\]
We also extend the definition of $\mathcal{B}_y$ to quotients of elements of $U$, i.e., if $\frac{a}{b}$ with $a,b \in U$, then we define:
\[
\mathcal{B}_y(\frac{a}{b}):= \frac{\mathcal{B}_y(a)}{\mathcal{B}_y(b)}.
\]

With the above definitions, let $\mathcal{J}$ denote the $\lambda_y$-balanced $K$-theoretic class of $\overline{J}^{tw}$ given in \eqref{twisted function}. That is, $\mathcal{J}:= \mathcal{B}_y(\overline{J}^{tw})$, and we obtain:
\[
\mathcal{J} = \sum_{d_1,\dots,d_n\geq 0}\prod_{i=1}^rQ_i^{d_i}\frac{\prod_{i=1}^n\prod_{j=1}^{n}\prod_{m=1}^{d_i}(1+yq^m\frac{P_i}{t_j})}{\prod_{i=1}^n\prod_{j=1}^{n}\prod_{m=1}^{d_i}(1-q^m\frac{P_i}{t_j})}\prod_{i\neq j}\frac{\prod_{m=-\infty}^{d_i-d_j} (1-q^m\lambda\frac{P_i}{P_j})\prod_{m=-\infty}^{0} (1+yq^m\lambda\frac{P_i}{P_j})}{\prod_{m=-\infty}^{0} (1-q^m\lambda\frac{P_i}{P_j}) \prod_{m=-\infty}^{d_i-d_j} (1+yq^m\lambda\frac{P_i}{P_j})}.
\]

\section{}\label{Appendix B}
We prove the $q$-difference equations \eqref{q-difference eq} from \Cref{Bethe section}. For simplicity of notation, we consider only the case $i=1$. 

Let:
\[
\mathcal{J}=\sum_{d_1,\dots,d_n\geq 0}\prod_{i=1}^rQ_i^{d_i}\frac{\prod_{i=1}^n\prod_{j=1}^{n}\prod_{m=1}^{d_i}(1+yq^m\frac{P_i}{t_j})}{\prod_{i=1}^n\prod_{j=1}^{n}\prod_{m=1}^{d_i}(1-q^m\frac{P_i}{t_j})}\prod_{i\neq j}\frac{\prod_{m=-\infty}^{d_i-d_j} (1-q^m\lambda\frac{P_i}{P_j})\prod_{m=-\infty}^{0} (1+yq^m\lambda\frac{P_i}{P_j})}{\prod_{m=-\infty}^{0} (1-q^m\lambda\frac{P_i}{P_j}) \prod_{m=-\infty}^{d_i-d_j} (1+yq^m\lambda\frac{P_i}{P_j})}.
\]
\[
\mathcal{D}_1^1 = \prod_{j\neq 1}(1+y\lambda P_1P_j^{-1}q^{Q_1\partial_{Q_1}-Q_j\partial_{Q_j}})\prod_{j\neq 1} (1-\lambda qP_jP_1^{-1}q^{Q_j\partial_{Q_j}-Q_1\partial_{Q_1}})\prod_a(1-\frac{P_1}{t_a}q^{Q_1\partial_{Q_1}}).
\]
\[
\mathcal{D}_2^1 = \prod_{j\neq 1} (1+y\lambda qP_jP_1^{-1}q^{Q_j\partial_{Q_j}-Q_1\partial_{Q_1}})\prod_a(1+y\frac{P_1}{t_a}q^{Q_1\partial_{Q_1}})Q_1\prod_{j\neq 1}(1-\lambda qP_1P_j^{-1}q^{Q_1\partial_{Q_1}-Q_j\partial_{Q_j}}).
\]
 We have the following result.
\begin{thm}
  $\mathcal{D}_1^1\mathcal{J} = \mathcal{D}_2^1\mathcal{J}$.  
\end{thm}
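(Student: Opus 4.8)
The plan is to prove the operator identity $\mathcal{D}_1^1\mathcal{J} = \mathcal{D}_2^1\mathcal{J}$ by comparing coefficients of each monomial $\prod_{i=1}^r Q_i^{d_i}$. Write $\mathcal{J} = \sum_{\vec{d}} C_{\vec{d}}\,\prod_{i=1}^r Q_i^{d_i}$, where $C_{\vec{d}}$ is the explicit product displayed above. Since $q^{Q_i\partial_{Q_i}}$ acts diagonally, sending the coefficient of $\prod_i Q_i^{d_i}$ to $q^{d_i}$ times itself, every factor of $\mathcal{D}_1^1$ is diagonal. Extracting the coefficient of $\prod_i Q_i^{d_i}$ from $\mathcal{D}_1^1\mathcal{J}$ therefore multiplies $C_{\vec{d}}$ by
\[
L_{\vec{d}} := \prod_{j\neq 1}\bigl(1+y\lambda\tfrac{P_1}{P_j}q^{d_1-d_j}\bigr)\prod_{j\neq 1}\bigl(1-\lambda\tfrac{P_j}{P_1}q^{d_j-d_1+1}\bigr)\prod_{a}\bigl(1-\tfrac{P_1}{t_a}q^{d_1}\bigr).
\]
The operator $\mathcal{D}_2^1$ contains a single factor $Q_1$, which shifts the source degree $d_1\mapsto d_1-1$; applying the rightmost factors first and using $Q_1 q^{Q_1\partial_{Q_1}} = q^{-1}q^{Q_1\partial_{Q_1}}Q_1$ to move $Q_1$ outward, the coefficient of $\prod_i Q_i^{d_i}$ in $\mathcal{D}_2^1\mathcal{J}$ becomes $R_{\vec{d}}\cdot C_{(d_1-1,d_2,\dots,d_r)}$ for an explicit finite product $R_{\vec{d}}$. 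Thus the whole statement reduces to the scalar recursion $L_{\vec{d}}\,C_{\vec{d}} = R_{\vec{d}}\,C_{(d_1-1,\dots)}$ for every $\vec{d}$.

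Next I would verify this recursion by computing the ratio $C_{\vec{d}}/C_{(d_1-1,d_2,\dots,d_r)}$ directly from the product formula for $C_{\vec{d}}$. The $\vec{d}$-independent factors $\prod_{m=-\infty}^{0}$ cancel, and each infinite product $\prod_{m=-\infty}^{d_i-d_j}$ telescopes: decreasing $d_1$ by one removes (resp.\ adds) the single top factor at $m=d_i-d_j$, so the ratio collapses to finitely many linear factors. Concretely, the $P_i/t_j$ part contributes $\prod_{a}\frac{1+yq^{d_1}P_1/t_a}{1-q^{d_1}P_1/t_a}$, the pairs $(1,j)$ contribute $\prod_{j\neq1}\frac{1-q^{d_1-d_j}\lambda P_1/P_j}{1+yq^{d_1-d_j}\lambda P_1/P_j}$, and the pairs $(j,1)$ contribute the reciprocal-type factor $\prod_{j\neq1}\frac{1+yq^{d_j-d_1+1}\lambda P_j/P_1}{1-q^{d_j-d_1+1}\lambda P_j/P_1}$ (the upper limit $d_j-d_1$ \emph{increases} as $d_1$ decreases, reversing the role of numerator and denominator). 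Matching these three products termwise against $R_{\vec{d}}/L_{\vec{d}}$ then makes the recursion an identity.

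The base case $d_1=0$ must be handled separately: there is no source monomial with $Q_1$-degree $-1$, so the right-hand coefficient vanishes, and one must check that $L_{(0,d_2,\dots,d_r)}\,C_{(0,d_2,\dots)}=0$ as well. This follows because the factor $\prod_a(1-\tfrac{P_1}{t_a}q^{d_1})$ in $L$ specializes at $d_1=0$ to $\prod_a(1-P_1/t_a)$, which is zero by the defining relation of $K\bigl((\mathbb{P}^{n-1})^r\bigr)$ — exactly the same mechanism used to close the compatibility theorem for $\mathcal{I}$ in \Cref{Balanced class}.

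The main obstacle is purely the bookkeeping in the telescoping step: one must track, with correct off-by-one exponent shifts (e.g.\ $q^{d_1-d_j}$ versus $q^{d_1-1-d_j}$ produced by the $Q_1$-shift and the non-commutation rule), the asymmetry between the pairs $(1,j)$ and $(j,1)$, and confirm that the $y$-factors introduced by balancing in $\mathcal{D}_1^1,\mathcal{D}_2^1$ are precisely those needed to absorb the balancing factors in $C_{\vec{d}}$. Once the reduction to $L_{\vec{d}}C_{\vec{d}}=R_{\vec{d}}C_{(d_1-1,\dots)}$ is set up cleanly, the remaining identity is a finite rational-function check, and the argument for general $i$ is identical after permuting indices.
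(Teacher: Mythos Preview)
Your proposal is correct and follows essentially the same strategy as the paper: both arguments compare the coefficient of a fixed monomial $Q_1^{d_1}Q_2^{d_2}\cdots Q_r^{d_r}$ on each side, using that the $q$-shift factors act diagonally and the single $Q_1$ shifts the source degree by one. The paper carries this out by computing the full coefficient $A_3$ on the left and $B_3$ on the right in three steps each and observing $A_3=B_3$; your version rearranges the same computation as the recursion $L_{\vec d}\,C_{\vec d}=R_{\vec d}\,C_{(d_1-1,\dots)}$ and verifies it via the telescoping ratio $C_{\vec d}/C_{(d_1-1,\dots)}$, which is equivalent.

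One point where your write-up is actually more complete than the paper's: you explicitly treat the base case $d_1=0$, noting that the left-hand coefficient vanishes because of the relation $\prod_a(1-P_1/t_a)=0$ in $K\bigl((\mathbb{P}^{n-1})^r\bigr)$, while the right-hand side is automatically zero since $\mathcal{D}_2^1$ contains a factor $Q_1$. The paper's proof only matches coefficients of $Q_1^{d_1+1}$ for $d_1\ge 0$ and does not mention the constant-in-$Q_1$ term, so your observation fills a small gap.
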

\begin{proof}
Fix the following notation:
\[
E:= \frac{\prod_{i=2}^n\prod_{j=1}^{n}\prod_{m=1}^{d_i}(1+yq^m\frac{P_i}{t_j})}{\prod_{i=2}^n\prod_{j=1}^{n}\prod_{m=1}^{d_i}(1-q^m\frac{P_i}{t_j})}, \quad  F:= \prod_{i\neq j}^{i,j \geq2}\frac{\prod_{m=-\infty}^{d_i-d_j} (1-q^m\lambda\frac{P_i}{P_j})\prod_{m=-\infty}^{0} (1+yq^m\lambda\frac{P_i}{P_j})}{\prod_{m=-\infty}^{0} (1-q^m\lambda\frac{P_i}{P_j}) \prod_{m=-\infty}^{d_i-d_j} (1+yq^m\lambda\frac{P_i}{P_j})}.
\]
First, we find the coefficient of $Q_1^{d_1+1}Q_2^{d_2} \dots Q_r^{d_r}$ in $\mathcal{D}_1^1\mathcal{J}$. 

We proceed in three steps.

$\bullet {\bf Step1}$:

    Find the coefficient in $\mathcal{J}_1 := \prod_a(1-\frac{P_1}{t_a}q^{Q_1\partial_{Q_1}})\mathcal{J}$ and denote it by $A_1$. We obtain:
    \[
    \scalebox{0.95}{$A_1=E\frac{\prod_{j=1}^{n}\prod_{m=1}^{d_1+1}(1+yq^m\frac{P_1}{t_j})}{\prod_{j=1}^{n}\prod_{m=1}^{d_1}(1-q^m\frac{P_1}{t_j})}\prod_{j\neq 1}\frac{\prod_{m=-\infty}^{d_1+1-d_j} (1-q^m\lambda\frac{P_1}{P_j})\prod_{m=-\infty}^{0} (1+yq^m\lambda\frac{P_1}{P_j})}{\prod_{m=-\infty}^{0} (1-q^m\lambda\frac{P_1}{P_j}) \prod_{m=-\infty}^{d_1+1-d_j} (1+yq^m\lambda\frac{P_1}{P_j})} \cdot \frac{\prod_{m=-\infty}^{d_j-d_1-1} (1-q^m\lambda\frac{P_j}{P_1})\prod_{m=-\infty}^{0} (1+yq^m\lambda\frac{P_j}{P_1})}{\prod_{m=-\infty}^{0} (1-q^m\lambda\frac{P_j}{P_1}) \prod_{m=-\infty}^{d_j-d_1-1} (1+yq^m\lambda\frac{P_j}{P_1})}F$.}
    \]
    
$\bullet {\bf Step 2}$:

    Find the coefficient in $ \mathcal{J}_2 := \prod_{j\neq 1} (1-\lambda qP_jP_1^{-1}q^{Q_j\partial_{Q_j}-Q_1\partial_{Q_1}})\mathcal{J}_1$. Denote it by $A_2$, then:
    \[
    \scalebox{0.95}{$A_2=E\frac{\prod_{j=1}^{n}\prod_{m=1}^{d_1+1}(1+yq^m\frac{P_1}{t_j})}{\prod_{j=1}^{n}\prod_{m=1}^{d_1}(1-q^m\frac{P_1}{t_j})}\prod_{j\neq 1}\frac{\prod_{m=-\infty}^{d_1+1-d_j} (1-q^m\lambda\frac{P_1}{P_j})\prod_{m=-\infty}^{0} (1+yq^m\lambda\frac{P_1}{P_j})}{\prod_{m=-\infty}^{0} (1-q^m\lambda\frac{P_1}{P_j}) \prod_{m=-\infty}^{d_1+1-d_j}(1+yq^m\lambda\frac{P_1}{P_j})} \cdot \frac{\prod_{m=-\infty}^{d_j-d_1} (1-q^m\lambda\frac{P_j}{P_1})\prod_{m=-\infty}^{0} (1+yq^m\lambda\frac{P_j}{P_1})}{\prod_{m=-\infty}^{0} (1-q^m\lambda\frac{P_j}{P_1}) \prod_{m=-\infty}^{d_j-d_1-1} (1+yq^m\lambda\frac{P_j}{P_1})}F$.}
    \]

    $\bullet {\bf Sterp 3}$:

    Find the coefficient in $\mathcal{D}_1^1\mathcal{J} = \prod_{j\neq 1}(1+y\lambda P_1P_j^{-1}q^{Q_1\partial_{Q_1}-Q_j\partial_{Q_j}})\mathcal{J}_2$. Denote it by $A_3$, and we obtain:
    \[
    \scalebox{0.95}{$A_3=E\frac{\prod_{j=1}^{n}\prod_{m=1}^{d_1+1}(1+yq^m\frac{P_1}{t_j})}{\prod_{j=1}^{n}\prod_{m=1}^{d_1}(1-q^m\frac{P_1}{t_j})}\prod_{j\neq 1}\frac{\prod_{m=-\infty}^{d_1+1-d_j} (1-q^m\lambda\frac{P_1}{P_j})\prod_{m=-\infty}^{0} (1+yq^m\lambda\frac{P_1}{P_j})}{\prod_{m=-\infty}^{0} (1-q^m\lambda\frac{P_1}{P_j}) \prod_{m=-\infty}^{d_1-d_j}(1+yq^m\lambda\frac{P_1}{P_j})} \cdot \frac{\prod_{m=-\infty}^{d_j-d_1} (1-q^m\lambda\frac{P_j}{P_1})\prod_{m=-\infty}^{0} (1+yq^m\lambda\frac{P_j}{P_1})}{\prod_{m=-\infty}^{0} (1-q^m\lambda\frac{P_j}{P_1}) \prod_{m=-\infty}^{d_j-d_1-1} (1+yq^m\lambda\frac{P_j}{P_1})}F$.}
    \]
    This is the coefficient of $Q_1^{d_1+1} \dots Q_r^{d_r}$ in $\mathcal{D}_1^1\mathcal{J}$. 
    
    Now we compute the coefficient of $Q_1^{d_1+1}Q_2^{d_2} \dots Q_r^{d_r}$ in $\mathcal{D}_2^1\mathcal{J}$, again in three steps.

    $\bullet {\bf Step1}$:
    
    Find the coefficient of $Q_1^{d_1}Q_2^{d_2} \dots Q_r^{d_r}$ in $\mathcal{I}_1 := Q_1\prod_{j\neq 1}(1-\lambda qP_1P_j^{-1}q^{Q_1\partial_{Q_1}-Q_j\partial_{Q_j}}) \mathcal{J}$, denoted by $B_1$:
    \[
    \scalebox{0.95}{$B_1=E\frac{\prod_{j=1}^{n}\prod_{m=1}^{d_1}(1+yq^m\frac{P_1}{t_j})}{\prod_{j=1}^{n}\prod_{m=1}^{d_1}(1-q^m\frac{P_1}{t_j})}\prod_{j\neq 1}\frac{\prod_{m=-\infty}^{d_1+1-d_j} (1-q^m\lambda\frac{P_1}{P_j})\prod_{m=-\infty}^{0} (1+yq^m\lambda\frac{P_1}{P_j})}{\prod_{m=-\infty}^{0} (1-q^m\lambda\frac{P_1}{P_j}) \prod_{m=-\infty}^{d_1-d_j} (1+yq^m\lambda\frac{P_1}{P_j})} \cdot \frac{\prod_{m=-\infty}^{d_j-d_1} (1-q^m\lambda\frac{P_j}{P_1})\prod_{m=-\infty}^{0} (1+yq^m\lambda\frac{P_j}{P_1})}{\prod_{m=-\infty}^{0} (1-q^m\lambda\frac{P_j}{P_1}) \prod_{m=-\infty}^{d_j-d_1} (1+yq^m\lambda\frac{P_j}{P_1})}F$.}
    \]
    Note that in this step the degree of $Q_1$ increases to $d_1+1$. Therefore, in the next step, we find the coefficient of $Q_1^{d_1+1} \dots Q_r^{d_r}$, denoted by $B_2$.

    $\bullet {\bf Step 2}$:

    In $\mathcal{I}_2 := \prod_a(1+y\frac{P_1}{t_a}q^{Q_1\partial_{Q_1}})\mathcal{I}_1$, we have:
    \[
    \scalebox{0.95}{$B_2=E\frac{\prod_{j=1}^{n}\prod_{m=1}^{d_1+1}(1+yq^m\frac{P_1}{t_j})}{\prod_{j=1}^{n}\prod_{m=1}^{d_1}(1-q^m\frac{P_1}{t_j})}\prod_{j\neq 1}\frac{\prod_{m=-\infty}^{d_1+1-d_j} (1-q^m\lambda\frac{P_1}{P_j})\prod_{m=-\infty}^{0} (1+yq^m\lambda\frac{P_1}{P_j})}{\prod_{m=-\infty}^{0} (1-q^m\lambda\frac{P_1}{P_j}) \prod_{m=-\infty}^{d_1-d_j} (1+yq^m\lambda\frac{P_1}{P_j})} \cdot \frac{\prod_{m=-\infty}^{d_j-d_1} (1-q^m\lambda\frac{P_j}{P_1})\prod_{m=-\infty}^{0} (1+yq^m\lambda\frac{P_j}{P_1})}{\prod_{m=-\infty}^{0} (1-q^m\lambda\frac{P_j}{P_1}) \prod_{m=-\infty}^{d_j-d_1} (1+yq^m\lambda\frac{P_j}{P_1})}F$.}
    \]

    $\bullet {\bf Step 3}$:
    
    Finally, compute the coefficient in $\mathcal{D}_2^1\mathcal{J} = \prod_{j\neq 1} (1+y\lambda qP_jP_1^{-1}q^{Q_j\partial_{Q_j}-Q_1\partial_{Q_1}}) \mathcal{I}_2$, denoted by $B_3$:
    \[
     \scalebox{0.93}{$B_3=E\frac{\prod_{j=1}^{n}\prod_{m=1}^{d_1+1}(1+yq^m\frac{P_1}{t_j})}{\prod_{j=1}^{n}\prod_{m=1}^{d_1}(1-q^m\frac{P_1}{t_j})}\prod_{j\neq 1}\frac{\prod_{m=-\infty}^{d_1+1-d_j} (1-q^m\lambda\frac{P_1}{P_j})\prod_{m=-\infty}^{0} (1+yq^m\lambda\frac{P_1}{P_j})}{\prod_{m=-\infty}^{0} (1-q^m\lambda\frac{P_1}{P_j}) \prod_{m=-\infty}^{d_1-d_j} (1+yq^m\lambda\frac{P_1}{P_j})} \cdot \frac{\prod_{m=-\infty}^{d_j-d_1} (1-q^m\lambda\frac{P_j}{P_1})\prod_{m=-\infty}^{0} (1+yq^m\lambda\frac{P_j}{P_1})}{\prod_{m=-\infty}^{0} (1-q^m\lambda\frac{P_j}{P_1}) \prod_{m=-\infty}^{d_j-d_1-1} (1+yq^m\lambda\frac{P_j}{P_1})}F$.}
    \]
    Since $A_3 = B_3$, and $A_3$ is the coefficient of $Q_1^{d_1+1} \dots Q_r^{d_r}$ in $\mathcal{D}_1^1$ while $B_3$ is the coefficient of $Q_1^{d_1+1} \dots Q_r^{d_r}$ in $\mathcal{D}_1^2$, we conclude that $\mathcal{D}_1^1\mathcal{J} = \mathcal{D}_1^2\mathcal{J}$, as desired.
\end{proof}

\bibliographystyle{alpha}
\bibliography{biblio}

\end{document}